\newcommand\bOmega{\boldsymbol{\Omega}}
\newcommand{\pd}[2]{\dfrac{\partial #1}{\partial #2}}
\newcommand{\dd}{\,{\rm d}}
\newcommand\Beta{\boldsymbol{B}}
\newcommand\Trace{\mathrm{Trace}}
\newcommand\bbr{\boldsymbol{\mathit{r}}}
\newcommand\bbrE{\boldsymbol{\mathit{E}}}
\newcommand\bbrR{\boldsymbol{\mathit{R}}}
\newcommand\bbrT{\boldsymbol{\mathit{T}}}
\newcommand\bbrV{\boldsymbol{\mathit{V}}}
\newcommand\Identity{\boldsymbol{\mathit{I}}}
\def\det{\mathit{det}}
\newtheorem{remark}{Remark}[section]
\newtheorem{theorem}{Theorem}[section]
\newtheorem{lemma}{Lemma}[section]
\newtheorem{corollary}{Corollary}[section]
\newcommand\mansatz{\hat I_M}
\newcommand\bansatz{\hat I_B}
\begin{document}

\title{ 3D $B_2$ Model for Radiative Transfer Equation \\ Part I:
  Modelling }

\author{Ruo Li\thanks{School of Mathematical Sciences, Peking
    University and State Key Laboratory of Space Weather, Chinese
    Academy of Sciences, Beijing. Email: \tt{rli@math.pku.edu.cn}}
  ~and~ Weiming Li\thanks{School of Mathematical Sciences, Peking
    University. Email: \tt{liweiming@pku.edu.cn}}}

\date{\today}

\maketitle

%!TEX root = AppM2_MultiD.tex

\begin{abstract}
  We extend to three-dimensional space the approximate $M_2$ model for
  the slab geometry studied in \cite{alldredge2016approximating}. The
  $B_2$ model therein, as a special case of the second order extended
  quadrature method of moments (EQMOM), is proved to be globally
  hyperbolic. The model we propose here extends EQMOM to multiple
  dimensions following the idea to approximate the maximum entropy
  closure for the slab geometry case. Like the $M_2$ closure, the
  ansatz of the new model has the capacity to capture both isotropic
  and beam-like solutions. Also, the new model has fluxes in
  closed-form; thus, it is applicable to practical numerical
  simulations. The rotational invariance, realizability, and
  hyperbolicity of the model are studied.
\end{abstract}

Keywords: Radiative transfer, moment model, maximum entropy closure

%%% Local Variables: 
%%% mode: latex
%%% TeX-master: "AppM2_MultiD.tex"
%%% End: 

%!TEX root = AppM2_MultiD.tex

\section{Introduction}

The radiative transfer equations describe the transportation of light in a medium
\cite{pomraning1973equations}. They are \emph{kinetic equations}, and the unknown 
is the specific intensity of photons. The specific intensity is a function of time, 
spatial coordinates, frequency, and angular variables. The moment method is an 
efficient approach for reducing the computation cost brought about by the high-dimensionality of 
variables of kinetic equations.

Moments are obtained by integrating the specific intensity against
monomials of the angular variables. In many applications, the
quantities of interest are the few lowest order moments. Therefore
moments are good choices for discretizing the angular variables.
However, moment systems are not closed. Closing the system by
specifying a constitutive relationship is known as the
\emph{moment-closure problem}. One approach towards moment-closure is
to recover the angular dependence of the specific intensity from the
known moments. The reconstructed specific intensity is called an
\emph{ansatz}. Ideally, the ansatz should be non-negative for all
moments which can be generated by a non-negative distribution. Also,
one would like the system to be hyperbolic since hyperbolicity is
necessary for the local well-posedness of Cauchy problem.  Other
natural requirements include that the ansatz satisfies rotational
invariance and reproduces the isotropic distribution at equilibrium.
Numerous forms of ans\"atze have been studied in the literature
\cite{pomraning1973equations, Lewis-Miller-1984}.  Yet, in
multi-dimensional cases, the maximum entropy method, referred to as
the $M_n$ model, is perhaps the
only method known so far to have both realizability and global
hyperbolicity \cite{Dubroca-Feugas-1999}. However, the flux functions
of the maximum entropy method are generally not explicit
\footnote{
With the first order maximum entropy model for the grey equations as the only exception
\cite{Dubroca-Feugas-1999}.
}, so
numerically computing such models involve solving highly
nonlinear and probably ill-conditioned optimization problems
frequently. There have been continuous efforts on speeding up the
computation process \cite{alldredge2012high, alldredge2014adaptive,
  garrett2015optimization}. Recently, there are also attempts in
deriving closed-form approximations of the maximum entropy closure in
order to avoid the expensive computations. For 1D cases, an
approximation to the $M_n$ models using the Kershaw closure is given
in \cite{schneider2016kershaw}.  For multi-dimensional cases, a model
based on directly approximating the closure relations of the $M_1$
and $M_2$ methods is proposed in \cite{pichard2016approximation}.  Our
work in this paper also aims at constructing closed-form
approximations of the maximum entropy model. Like
\cite{pichard2016approximation}, we seek a closed-form approximation
to the $M_2$ method in 3D. But unlike \cite{pichard2016approximation},
we derive our model from an ansatz with some similarity to that of the
$M_2$ model.

In a previous study \cite{alldredge2016approximating}, we analyzed the
second order extended quadrature method of moments
(EQMOM) introduced in \cite{vikas2013radiation} which we call the $B_2$
model.  
In this work, we propose an approximation of the $M_2$ model in
3D space by extending the $B_2$ model studied in
\cite{alldredge2016approximating} to 3D. The reason for this approach
is that the $B_2$ ansatz shares the following properties with the $M_2$ ansatz:
\begin{enumerate}
\item it interpolates smoothly between isotropic and Dirac
  distribution functions;
\item it captures anisotropy in opposite directions. 
\end{enumerate}
The $B_2$ closure in \cite{alldredge2016approximating} is for slab
geometries. Preserving rotational invariance when extending it to 3D
space is non-trivial. We use the sum of the axisymmetric $B_2$
ans\"atze in three mutually orthogonal directions as the ansatz for a
second order moment model in 3D space. This new model is referred to
as the {\it 3D $B_2$ model}. The consistency of known moments requires
the three mutually orthogonal directions to be the three eigenvectors
of the second-order moment matrix. We point out that there are three free
parameters in the ansatz of the 3D $B_2$ model after the consistency of 
known moments is fulfilled. These parameters are
specified as functions of the first-order moments and the eigenvalues
of the second-order moment matrix. We prove that the 3D $B_2$ model is
rotationally invariant.  The region where the model possesses a
non-negative ansatz is illustrated, as well as the hyperbolicity region of the model
with vanished first-order moment. Though far from
perfect, the 3D $B_2$ model shares some important features of the
$M_2$ closure. Also, the model has explicit flux functions, making it
very convenient for numerical simulations.

The rest of this paper is organized as follows. In Section 2 we recall
the basics of moment models, and briefly, introduce the $M_2$ method
as well as the $B_2$ model for 1D slab geometry. In Section 3 we
propose the 3D $B_2$ model. In Section 4 we analyze its
properties. Finally, in Section 5 we summarize and discuss future
work.

%%% Local Variables: 
%%% mode: latex
%%% TeX-master: "AppM2_MultiD.tex"
%%% End: 

%!TEX root = AppM2_MultiD.tex

\section{Preliminaries}\label{sec:preliminary}
The specific intensity $I(t,\br,\nu,\bOmega)$ is governed by the
radiative transfer equation
\begin{equation}\label{eq:rt}
  \dfrac{1}{c}\pd{I}{t}+\bOmega\cdot\nabla I
  = \cC(I),   
\end{equation}
where $c$ is the speed of light. The variables in the equation are
time $t \in \mathbb{R}^+$, the spatial coordinates
$\br = (x, y, z)\in \mathbb{R}^3$, the angular variables $\bOmega = \left(
\Omega_x, \Omega_y, \Omega_z \right) \in \bbS^2$,
and frequency $\nu\in\mathbb{R}^+$. The right-hand side
$\cC(I)$ describes the interactions between photons and the background medium
and are not the focus of this paper. A typical right-hand side takes the form 
\[
  \cC(I) = -\sigma_a I(\bsOmega) - \sigma_s
  \left(I(\bsOmega) - \frac1{4\pi}\int_{\bbS^2} I(\bsOmega)
  \dd\bsOmega \right),
\]
where $\sigma_a$ and $\sigma_s$ are constant parameters. 
We introduce the moment method in the context of second order models. Let
\begin{equation}\label{eq:v-def}
\begin{array}{rllll}
  \bv = [ &1, &&& \\ [2mm]
  & (\bOmega\cdot\be_x), & (\bOmega\cdot\be_y), & (\bOmega\cdot\be_z),
  &\\ [2mm]
  & (\bOmega\cdot\be_x)^2,&(\bOmega\cdot\be_x)(\bOmega\cdot\be_y),
  &(\bOmega\cdot\be_x)(\bOmega\cdot\be_z), & \\
  && (\bOmega\cdot\be_y)^2,&(\bOmega\cdot\be_y)(\bOmega\cdot\be_z) &]^T.
\end{array}
\end{equation}
Use $\be_x$, $\be_y$ and $\be_z$ to denote the unit vectors along the
coordinate axes. Define
$$
\Vint{ \psi } := \int_{\bbS^2} \psi(\nu, \bOmega)\, \dd\bOmega.
$$
Multiplying equation \eqref{eq:rt} by the vector $\bv$ defined in \eqref{eq:v-def}
and integrating over the angular variables give
\begin{equation}\label{eq:moment-eq}
  \dfrac{1}{c}\pd{\left\langle \bv I\right\rangle}{t}
  + \pd{\Vint{\Omega_x \bv I}}{x} + \pd{\Vint{\Omega_y \bv I}}{y}
  + \pd{\Vint{\Omega_z \bv I}}{z}
  = \left\langle\bv \mathcal{C}(I)\right\rangle.
\end{equation}
In system \eqref{eq:moment-eq}, the time evolution of second-order moments
relies on third-order moments. Therefore \eqref{eq:moment-eq}
is not a closed system. If we approximate the third-order moments in \eqref{eq:moment-eq}
using lower order moments, we could get a closed system. 
Let\footnote{
The notation $a \simeq b$ means `$a$ is an 
approximation of $b$.'
}
\[
  E^0 \simeq \Vint{I}, \quad
  \bE^1 \simeq \Vint{\bOmega I}, \quad
  \bbrE^2 \simeq \Vint{\bOmega\otimes\bOmega I}, \quad
  \bbrE^3 \simeq \Vint{\bOmega\otimes\bOmega\otimes\bOmega
  I}.
\]
A closed system of equations has the form
\begin{equation}\label{eq:moment-closure}
  \begin{split}
    & \dfrac1c\pd{E^0}{t} + \nabla\cdot\bE^1 = r^0(E^0, \bE^1, \bbrE^2), \\
    & \dfrac1c \pd{\bE^1}{t} + \nabla \cdot \bbrE^2 = 
    \br^1(E^0, \bE^1, \bbrE^2), \\
    & \dfrac1c \pd{\bbrE^2}{t} + \nabla \cdot \left[\bbrE^3(E^0, \bE^1,
  \bbrE^2)\right] = 
    \bbr^2(E^0, \bE^1, \bbrE^2).
  \end{split}
\end{equation}
The choice of $\bbrE^3$, $r^0$, $\br^1$, and $\bbr^2$ specify a
\emph{closure}.  The system \eqref{eq:moment-closure} is a second
order moment model.  The following properties of a moment model
concern us the most, which were frequently discussed in the
literature.
\begin{itemize}
\item[] {\bf Rotational invariance: }
Consider a conservation law in multi-dimensions,
\begin{equation}\label{eq:csv}
  \pd{\bU}{t} + \sum\limits_{i=1}^D 
  \pd{\bF_i(\bU)}{x_i} = 0.
\end{equation}
It satisfies rotational invariance if for any unit
vector $\bn=(n_1, \cdots, n_D)^T \in\bbR^D$, there exists a matrix
$\mathbb{T}$ depending on $\bn$, such that
\[
  \sum\limits_{i=1}^D n_i \bF_i(\bU) = 
  \mathbb{T}^{-1} \bF_1(\mathbb{T} \bU).
\]

\item[] {\bf Hyperbolicity:}
Let $\boldsymbol{\mathrm{J}}_i$ be the Jacobian matrix
of the flux function $\bF_i$ in equation \eqref{eq:csv}.
The system \eqref{eq:csv} is hyperbolic if for any
unit vector $\bn\in\bbR^D$, 
$\sum\limits_{i=1}^D n_i \boldsymbol{\mathrm{J}}_i$
is real diagonalizable.

\item[] {\bf Realizability:}
The realizability domain is defined as moments which 
could be generated by a nonnegative distribution
function \cite{junk2000maximum}. A closure is said
to be realizable if the higher order moments it
closes belong to the realizability domain.

For one dimensional problem, \cite{CurFial91} gives necessary and
sufficient conditions for realizability. Its results cover moments of
arbitrary order.  For multi-dimensional case, only the conditions for
the first and second order models are currently known
\cite{kershaw1976flux}, while the conditions for moments of higher
order remain open problems.
\end{itemize}

The maximum entropy models are equipped with all the properties
mentioned above.  For detailed discussions we refer to
\cite{junk2000maximum, levermore1996moment,Dubroca-Feugas-1999}.  We
review the principles for deriving the maximum entropy models by
taking the second order case as an example. It is called the
$M_2$ model.  Solve the following constrained variational minimization
problem
\begin{equation}
\label{eq:mn-opt}
\begin{split}
  \minimize \:\:  & H(I)\\
 \st \:\: & \Vint{ I }= E^0, \Vint{\bOmega I} = \bE^1, 
 \text{and } \Vint{\bOmega \otimes \bOmega I} = \bbrE^2
\end{split}
\end{equation}
where $H(I)$ is the Bose-Einstein entropy
\begin{equation}\label{eq:be-entropy}
 H(I) := \Vint{\frac{2 k_B \nu^2}{c^3} \left(
  \chi I \log \left( \chi I \right)
  - \left(\chi I + 1 \right) \log \left( \chi I + 1 \right)
  \right)},
\end{equation}
where $\chi = \dfrac{c^2}{2 \hbar \nu^3}$.  This gives us an ansatz
\begin{equation}\label{eq:be-ansatz}
 \mansatz(\nu, \bOmega) =
  \dfrac{1}{\chi} \left( \exp \left( -\dfrac{\hbar \nu}{k_B}
    \bsalpha\cdot\bv \right)-1 \right)^{-1},
\end{equation}
where $\bsalpha \cdot \bv $ is a second order polynomial of
$\bOmega \in \bbS^2$.  The parameters $\bsalpha$ is the unique vector
such that
$$\Vint{ \mansatz} = E^0, \quad 
\Vint{\bOmega \mansatz} = \bE^1,
\text{and} \quad 
\Vint{\bOmega \otimes \bOmega \mansatz} = \bbrE^2.
$$
The $M_2$ method is defined by taking
\begin{equation}\label{eq:mn-flux-and-production}
  \begin{array}{ll}
    \bbrE^3 := \Vint{\bOmega \otimes \bOmega \otimes \bOmega \mansatz},
    \quad & r^0 := \Vint{ \cC (\mansatz)},\\
    \br^1 := \Vint{ \bOmega \cC (\mansatz)}, \quad
    & \bbr^2 := \Vint{ \bOmega \otimes \bOmega \cC
    (\mansatz)}.
  \end{array}
\end{equation}
in \eqref{eq:moment-closure}.

However, the $M_2$ closure is not given explicitly, so
\eqref{eq:mn-flux-and-production} has to be computed by solving the
optimization problem \eqref{eq:mn-opt} numerically. The numerical
optimization at each time step for all spatial grid is extremely 
expensive.

Recent work \cite{pichard2016approximation} proposes an approximation
of the $M_2$ method in multi-dimensions by directly approximating its
closure relation, though the corresponding ansatz to the closure is
not clarified. We adopt the approach of constructing an ansatz to
approximate the $M_2$ ansatz, then the closure relation is given
naturally as in \eqref{eq:mn-flux-and-production}.

In a previous work \cite{alldredge2016approximating}, we examined the
properties of second order extended quadrature method of moments
(EQMOM) proposed in \cite{vikas2013radiation} in slab geometry, and
the model was referred as the $B_2$ model. In EQMOM, the ansatz
$\hat I$ is reconstructed by a combination of beta distributions. The
beta distribution on $[-1, 1]$ is given by
$$
\mathcal{F}(\mu; \gamma, \delta) = \frac1{2 \Beta(\xi, \eta)}
 \left(\frac{1 + \mu}2 \right)^{\xi - 1}
 \left(\frac{1 - \mu}2 \right)^{\eta-1},
\quad 
 \xi  = \frac\gamma\delta, \quad
 \eta = \frac{1 - \gamma}\delta.
$$
where $\Beta(\xi, \eta)$ is the beta function. For the $B_2$ model in
1D slab geometry, the ansatz is taken as
\[
w \mathcal{F}(\mu; \gamma, \delta)
\]
where the parameters $w$, $\gamma$, and $\delta$ are given by
consistency to the known moments.

We found that the 1D $B_2$ model shares the key features of the $M_2$
model in slab geometry, including existence of non-negative ansatz and
therefore realizability, as well as global hyperbolicity. It is the
focus of this paper to extend the 1D $B_2$ model to three-dimensional
case.

Our motivation to this extension is based on observing a common
attribute between the $B_2$ and the $M_2$ ansatz in 1D slab
geometry. Both ansatz can exactly recover the isotropic distribution,
and at the same time, give a combination of Dirac functions on the
boundary of the realizability domain. Dirac functions could not be
recovered by the standard spectral method which has a polynomial as an
ansatz. It has been pointed out that the inability to capture
anisotropy is a drawback of the standard spectral method
\cite{frank2006partial}.

In three-dimensional space, the anisotropy of the specific intensity
could come in orthogonal directions. For example, we consider a setup
similar to the double beam problem discussed in
\cite{pichard2016approximation} \footnote{ In
  \cite{pichard2016approximation}, this example is used to demonstrate
  the advantage of the $M_2$ model over its first-order counterpart,
  the $M_1$ model.  }.  For the region $[x,y]\in[-1,1]\times[-1,1]$,
consider equation \eqref{eq:rt} with the right-hand side chosen as
isotropic scattering (which means $\sigs$ is a non-negative constant):
\[
  \cC(I) = \sigs \left( -I + \dfrac{1}{4\pi} \Vint{I} 
  \right).
\]
Laser beams are imposed as boundary inflow from orthogonal 
directions: $I = \delta(\bsOmega\cdot\be_x-1)$ on the 
boundary $x = -1$, and $I = \delta(\bsOmega\cdot\be_y - 1)$
on the boundary $y = -1$.

For the extreme case when the medium
is vacuum and $\sigs = 0$, the exact solution for any $ct > 2$
is
\begin{equation}\label{eq:crossing-beam-distribution}
  I = \delta(\bsOmega\cdot\be_x - 1) + 
  \delta(\bsOmega\cdot\be_y - 1).
\end{equation}
It is pointed out in \cite{pichard2016approximation} that 
the $M_2$ ansatz is able to exactly reproduce the distribution in
\eqref{eq:crossing-beam-distribution} from the moments. We aim to construct an
ansatz that can capture anisotropy in  
orthogonal directions, like the $M_2$ ansatz.

For non-vanishing scattering, the steady-state solution of the above 
problem is an isotropic distribution. For any period before steady-state
is reached, the exact specific intensity $I$ should be somewhere
between double beams, as in \eqref{eq:crossing-beam-distribution},
and isotropic. The ansatz of the $M_2$ model provides
a smooth interpolation between these two extremes, giving it an advantage 
in simulating such problems. We aim to propose an
ansatz with similar features. This will be discussed in the following sections.

%%% Local Variables: 
%%% mode: latex
%%% TeX-master: "AppM2_MultiD.tex"
%%% End: 

%!TEX root = AppM2_MultiD.tex
\section{3D $B_2$ Model}
\label{sec:approx}
For second order models, which are the subject of this paper, 
the set of realizable moments as given in \cite{kershaw1976flux} is
\begin{equation}\label{eq:realizablity-region}
  \begin{split}
    \cM =  \Big\{ &  \left(E^0, \bE^1, \bbrE^2\right) 
    \in \bbR \times \bbR^3 \times \bbR^{3\times3}, 
    ~~\text{s.t.}~~0 < E^0 = \Trace(\bbrE^2),\\  
    &  \|\bE^1\| \leq E^0, \,
    ~~\text{and}~~E^0 \bbrE^2 - \bE^1 \otimes \bE^1 
  ~~\text{symmetric non-negative} \Big\}.
\end{split}
\end{equation}
It is also referred to as the realizability domain. 
Our goal is to reconstruct an ansatz of the specific intensity
given moments within $\cM$.

We take the summation of three axisymmetric distributions as the
ansatz for the specific intensity:
\begin{equation}\label{eq:B2-ansatz}
  \bansatz(\bOmega) = \sum_{i = 1}^3 \dfrac1{2\pi} w_i 
  f(\bOmega\cdot\bR_i; \gamma_i, \delta_i),
\end{equation}
where $\bR_i$ are three mutually orthogonal unit vectors. We assume that the matrix
$\bbrR = \left[\bR_1, \bR_2, \bR_3\right]$ satisfy $\det(\bbrR) = 1$.
It is also
assumed that $f(\mu; \gamma, \delta)$ is a non-negative function of
$\mu$ with two shape parameters $\gamma$ and $\delta$, and
$\displaystyle \int_{-1}^1 f(\mu; \gamma, \delta) \dd\mu = 1$. All the
parameters in the ansatz, including $\bR_i$, $w_i$, $\gamma_i$, and
$\delta_i$, $i=1,2,3$, are functions of known moments and are
independent of $\bOmega$. We first discuss the properties of
\eqref{eq:B2-ansatz} for any arbitrary non-negative function
$f(\mu;\gamma,\delta)$ whose integral over $\mu\in[-1,1]$ is one.
To simplify the computing process in discussing the consistency conditions, we
first make the following observation which will be used later.
\begin{lemma}\label{lem:integral-orthogonal}
  For any permutation $l, m, k$ of $1, 2, 3$, $\forall
  n_l,n_m, n_k\in \mathbb{N}$, we have
  \[
    \begin{aligned}
      & \Vint{(\bOmega\cdot\bR_l)^{n_l} (\bOmega\cdot\bR_m)^{n_m} 
      (\bOmega\cdot\bR_k)^{n_k} f(\bOmega\cdot\bR_k; \gamma_k,
    \delta_k)} = \\ & \qquad \qquad \qquad \left
      \{
        \begin{array}{l}
          0, ~~\text{if either }~n_l \text{ or } n_m \text{ is odd}, \\
          [4mm]

          2 \pi \int_{-1}^1 \mu^{n_k} f(\mu;\gamma_k, \delta_k) \dd\mu, 
          ~~\text{if } n_l = n_m = 0, \\
          [4mm]

          \pi \int_{-1}^1 (1 - \mu^2) \mu^{n_k} f(\mu; \gamma_k,
          \delta_k) \dd\mu, ~~\text{if } n_l = 2, ~n_m = 0.
        \end{array}\right.
      \end{aligned}
    \]
  \end{lemma}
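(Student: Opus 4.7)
The plan is to reduce the integral over $\bbS^2$ to an iterated integral in spherical coordinates adapted to the orthonormal frame $(\bR_l, \bR_m, \bR_k)$, to separate variables, and then to conclude by an elementary parity argument on the azimuthal part.

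First, I would exploit the fact that $\bR_1, \bR_2, \bR_3$ are mutually orthogonal unit vectors with $\det(\bbrR) = 1$, so they form a right-handed orthonormal basis. Consequently the map $\bOmega \mapsto (\bOmega\cdot\bR_l,\bOmega\cdot\bR_m,\bOmega\cdot\bR_k)$ is a rotation and preserves the surface measure of $\bbS^2$. This lets me introduce spherical coordinates $(\mu,\phi) \in [-1,1] \times [0,2\pi]$ along the axis $\bR_k$ with
\[
  \bOmega\cdot\bR_k = \mu, \quad
  \bOmega\cdot\bR_l = \sqrt{1-\mu^2}\cos\phi, \quad
  \bOmega\cdot\bR_m = \sqrt{1-\mu^2}\sin\phi,
\]
and $\dd\bOmega = \dd\mu\,\dd\phi$.

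Next, substituting into the integrand and noting that $f(\bOmega\cdot\bR_k;\gamma_k,\delta_k) = f(\mu;\gamma_k,\delta_k)$ has no $\phi$ dependence, the integral factorizes as
\[
  \left(\int_0^{2\pi} \cos^{n_l}\!\phi\,\sin^{n_m}\!\phi\,\dd\phi\right)
  \left(\int_{-1}^{1} (1-\mu^2)^{(n_l+n_m)/2}\,\mu^{n_k}\,f(\mu;\gamma_k,\delta_k)\,\dd\mu\right).
\]

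Finally I would settle the three cases by inspecting only the azimuthal integral. If $n_l$ or $n_m$ is odd, a half-period substitution (\emph{e.g.}\ $\phi \mapsto \phi+\pi$ when $n_l$ is odd, or $\phi \mapsto \pi-\phi$ when $n_m$ is odd) flips the sign of the integrand while preserving the interval, so the azimuthal integral vanishes, giving the first branch. If $n_l = n_m = 0$, the azimuthal integral equals $2\pi$ and the polar factor $(1-\mu^2)^0 = 1$, recovering the second branch. If $n_l = 2$ and $n_m = 0$, then $\int_0^{2\pi}\cos^2\!\phi\,\dd\phi = \pi$ and the polar integrand carries the factor $(1-\mu^2)$, which is the third branch.

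The calculation is essentially bookkeeping; the only step that warrants care is the first one, namely verifying that the substitution is a genuine change of variables on $\bbS^2$. This is exactly where the assumption $\det(\bbrR) = 1$ enters, ensuring the frame $(\bR_l,\bR_m,\bR_k)$ is a rotation of the standard basis rather than a reflection.
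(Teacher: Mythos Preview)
Your approach is essentially the paper's: align spherical coordinates with $\bR_k$, factor the integral into a polar part and an azimuthal part, and evaluate case by case. The paper writes things in $(\theta,\phi)$ where you use $(\mu,\phi)$, and it dispatches the odd case by the explicit antiderivative $\int_0^{2\pi}(1-\sin^2\phi)^j\,\dd(\sin\phi)=0$ rather than a symmetry argument, but these are cosmetic differences.

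Two small corrections are worth making. First, your parity substitutions are misassigned. Under $\phi\mapsto\phi+\pi$ both $\cos\phi$ and $\sin\phi$ change sign, so the integrand picks up the factor $(-1)^{n_l+n_m}$, which fails to flip when both $n_l$ and $n_m$ are odd; and under $\phi\mapsto\pi-\phi$ only $\cos\phi$ flips, giving $(-1)^{n_l}$ rather than $(-1)^{n_m}$. The clean choices are $\phi\mapsto\pi-\phi$ whenever $n_l$ is odd, and $\phi\mapsto -\phi$ (equivalently $2\pi-\phi$) whenever $n_m$ is odd. Second, your closing remark about $\det(\bbrR)=1$ is overstated: since $(l,m,k)$ ranges over \emph{all} permutations of $(1,2,3)$, the frame $(\bR_l,\bR_m,\bR_k)$ has determinant $-1$ for odd permutations. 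This is harmless, because reflections of $\bbS^2$ also preserve the surface measure, so the change of variables goes through regardless of orientation.
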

\begin{proof}
  Note that $f(\bOmega\cdot\bR_k; \gamma_k, \delta_k)$ is an 
  axisymmetric function with $\bR_k$ as its symmetric axis.
  Once $\bR_i$, $i=1,2,3$, are given, the value of
  \[
    \int_{\bbS^2} (\bOmega\cdot\bR_l)^{n_l} (\bOmega\cdot\bR_m)^{n_m}
    (\bsOmega \cdot \bR_k)^{n_k}
    f(\bOmega\cdot\bR_k; \gamma_k, \delta_k)
    \dd\bOmega,
  \]
  can be calculated conveniently by setting $\bR_k$ as coordinate
  axes. Below, we will repeatedly use this method to compute the moments. 

  Set the $z$-axis to be aligned to $\bR_k$, the $x$-axis to 
  be aligned to $\bR_l$, and the $y$-axis aligned to $\bR_m$.
  Then
\[
  \begin{split}
    & \int_{\bbS^2} (\bOmega\cdot\bR_l)^{n_l} (\bOmega\cdot\bR_m)^{n_m}
    (\bOmega\cdot\bR_k)^{n_k}
    f(\bOmega\cdot\bR_k; \gamma_k, \delta_k)
    \dd\bOmega \\
    = &  \int_0^\pi \left(\int_0^{2\pi} 
    (\sin\theta \cos\phi)^{n_l} (\sin\theta \sin\phi)^{n_m}
    (\cos\theta)^{n_k} f(\cos\theta; 
  \gamma_k,\delta_k) \dd\phi\right) \sin\theta \dd\theta \\
  = &  \int_0^\pi (\sin\theta )^{n_l + n_m} (\cos\theta)^{n_k}
  f(\cos\theta; \gamma_k,\delta_k) \sin\theta \dd\theta
  \int_0^{2\pi} (\cos\phi)^{n_l} (\sin\phi)^{n_m} \dd\phi. 
\end{split}
  \]
  Assume that $n_l$ is odd, let $n_l = 2 j + 1$.
  Then
  \[
    \begin{split}
      & \int_0^{2\pi} (\cos\phi)^{n_l} (\sin\phi)^{n_m} \dd\phi \\
      = & \int_0^{2\pi} (\cos\phi)^{2 j + 1} (\sin\phi)^{n_m} \dd\phi \\
      = & \int_0^{2\pi} (1-(\sin\phi)^2)^j (\sin\phi)^{n_m} \dd\sin\phi \\
      = & ~0.
    \end{split}
  \]
  Other cases are proved similarily.

  To calculate
  \[
    \dfrac1{2\pi} \int_{\bbS^2} (\bOmega\cdot\bR_k)^{n_k}
    f(\bOmega\cdot\bR_k; \gamma_k, \delta_k) \dd\bOmega,
  \]
  we set up the coordinate system such that $\bR_k$ is aligned to the
  $z$-axis. Let $\mu = \bOmega\cdot\bR_k$, then
  \[
    \begin{split}
      & \dfrac1{2\pi} \int_{\bbS^2} (\bOmega\cdot\bR_k)^{n_k}  
      f(\bOmega\cdot\bR_k; \gamma_k, \delta_k) \dd\bOmega \\
      = & \dfrac1{2\pi} \int_0^\pi \left( \int_0^{2\pi} 
    (\cos\theta)^{n_k} f(\cos\theta; \gamma_k, \delta_k) \dd\phi \right)
    \sin\theta \dd\theta \\
    = & \int_0^{\pi} (\cos\theta)^{n_k} f(\cos\theta; \gamma_k, \delta_k)   
    \sin\theta \dd\theta \\
    = & \int_{-1}^1 \mu^{n_k} f(\mu; \gamma_k, \delta_k) \dd\mu. 
  \end{split}
\]
If we let the $z$-axis to be aligned to $\bR_k$ and the $y$-axis
to be aligned to $\bR_l$, we have
\[
  \begin{split}
    & \dfrac1{2\pi} \int_{\bbS^2} (\bOmega\cdot\bR_l)^2
    (\bsOmega \cdot \bR_k)^{n_k}
    f(\bOmega\cdot\bR_k; \gamma_k, \delta_k) \dd\bOmega \\
    = & \dfrac1{2\pi} \int_0^\pi \left( \int_0^{2\pi}
    (\sin\theta\sin\phi)^2 (\cos\theta)^{n_k} f(\cos\theta; \gamma_k, \delta_k)
  \dd\phi \right) \sin\theta \dd\theta, \\
  = & \dfrac1{2\pi} \int_0^\pi (\sin\theta)^2 (\cos\theta)^{n_k} 
  f(\cos\theta; \gamma_k, \delta_k) \sin\theta \dd\theta
  \int_0^{2\pi} (\sin\phi)^2 \dd\phi \\
  = & \dfrac12 \int_{-1}^1 (1-\mu^2) \mu^{n_k} f(\mu; \gamma_k,
  \delta_k) \dd\mu. 
\end{split}
  \]
  Summarizing the results from the above three cases completes the proof of 
  this lemma.
\end{proof}
Take $\bv$ as defined in \eqref{eq:v-def}, the moments of interest are
\[
    \bE = [ E^0,E^1_1,E^1_2,E^1_3, E^2_{11},E^2_{12},E^2_{13},
    E^2_{22},E^2_{23} ]^T = \int_{\bbS^2} \bv \bansatz \dd\bOmega.
\]
The moment system based on the ansatz
\eqref{eq:B2-ansatz} is derived as
\begin{equation}\label{eq:moment_model}
  \pd{\bE}{t} + \pd{\bff_x(\bE)}{x} + \pd{\bff_y(\bE)}{y} +
  \pd{\bff_z(\bE)}{z} = \br(\bE),
\end{equation}
where
\[
  \begin{split}
    & \bff_x = [E^1_1, E^2_{11}, E^2_{12}, E^2_{13}, E^3_{111},
    E^3_{112}, E^3_{113}, E^3_{122}, E^3_{123}]^T = \int_{\bbS^2}
    (\bOmega\cdot\be_x) \bv \bansatz \dd\bOmega,\\
    & \bff_y = [E^1_2, E^2_{21}, E^2_{22}, E^2_{23}, E^3_{211},
    E^3_{212}, E^3_{213}, E^3_{222}, E^3_{223}]^T = \int_{\bbS^2}
    (\bOmega\cdot\be_y) \bv \bansatz \dd\bOmega,\\
    & \bff_z = [E^1_3, E^2_{31}, E^2_{32}, E^2_{33}, E^3_{311},
    E^3_{312}, E^3_{313}, E^3_{322}, E^3_{323}]^T = \int_{\bbS^2}
    (\bOmega\cdot\be_z) \bv \bansatz \dd\bOmega,
  \end{split}
\]
and $\br(\bE)$ is calculated from the scattering term, which is out of
the scope of our interests in this paper.

The parameters $\bR_i$, $w_i$, $\gamma_i$, and $\delta_i$ have to satisfy the 
consistency conditions: 
\begin{equation}\label{eq:consistency}
  E^0 = \Vint{\bansatz}, \quad
  \bE^1 = \Vint{\bOmega \bansatz}, \quand
  \bbrE^2 = \Vint{\bOmega \otimes \bOmega \bansatz}.
\end{equation}

The vectors $\bR_i$ in \eqref{eq:B2-ansatz} are determined by the consistency conditions
\eqref{eq:consistency} instantly, as shown in the following lemma. 
\begin{lemma}\label{lem:Rj}
  The consistency constraints \eqref{eq:consistency} require that
  $\bR_j$, $j=1,2,3$, be the eigenvectors of $\bbrE^2$.
\end{lemma}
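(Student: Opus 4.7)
The plan is to compute the off-diagonal entries of $\bbrE^2$ in the basis $\{\bR_1,\bR_2,\bR_3\}$ directly from the consistency condition and show they must vanish, which is precisely the statement that each $\bR_j$ is an eigenvector.

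First I would substitute the ansatz \eqref{eq:B2-ansatz} into the second-order consistency constraint in \eqref{eq:consistency} to obtain
\[
  \bbrE^2 \;=\; \sum_{i=1}^3 \frac{w_i}{2\pi}\,\Vint{\bOmega\otimes\bOmega\, f(\bOmega\cdot\bR_i;\gamma_i,\delta_i)}.
\]
To check that $\bR_j$ are eigenvectors, it suffices to verify that the matrix representing $\bbrE^2$ in the orthonormal basis $\{\bR_1,\bR_2,\bR_3\}$ is diagonal, i.e.\ that $\bR_l^T\bbrE^2\bR_m=0$ for every $l\neq m$. This reduces the problem to evaluating
\[
  \Vint{(\bOmega\cdot\bR_l)(\bOmega\cdot\bR_m)\, f(\bOmega\cdot\bR_i;\gamma_i,\delta_i)}
\]
for each $i\in\{1,2,3\}$ and each pair $l\neq m$.

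Next I would dispatch this by a three-case analysis aligning the triple $(l,m,i)$ with the hypotheses of Lemma~\ref{lem:integral-orthogonal}. If $i\notin\{l,m\}$, the triple $(l,m,i)$ is a permutation of $(1,2,3)$ and the relevant exponents are $n_l=1$, $n_m=1$, $n_i=0$; since $n_l$ is odd, Lemma~\ref{lem:integral-orthogonal} forces the integral to vanish. If $i=l$, then rewriting the integrand as $(\bOmega\cdot\bR_m)^1(\bOmega\cdot\bR_p)^0(\bOmega\cdot\bR_l)^1$ where $p$ is the remaining index, again one of the two ``non-$k$'' exponents is odd (namely that of $\bR_m$), so Lemma~\ref{lem:integral-orthogonal} applies and the integral is zero. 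The case $i=m$ is symmetric. Summing over $i$, every off-diagonal entry $\bR_l^T\bbrE^2\bR_m$ is a sum of zeros, so $\bbrE^2$ is diagonal in the basis $\{\bR_i\}$, and hence the $\bR_j$ are eigenvectors of $\bbrE^2$.

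The only conceptual subtlety, and the main thing I would be careful about, is that Lemma~\ref{lem:integral-orthogonal} is stated for permutations $(l,m,k)$ of $(1,2,3)$, so in the cases $i=l$ and $i=m$ I need to re-label the integrand in the lemma's format rather than apply it blindly; the exponent $0$ is permitted since $n_l,n_m,n_k$ range over $\mathbb{N}$. I would also note that this lemma only asserts a necessary condition: if $\bbrE^2$ has a repeated eigenvalue, the choice of $\bR_j$ within the degenerate eigenspace is not pinned down by consistency, and additional conventions (to be fixed later in the paper) would be needed to make the ansatz well-defined.
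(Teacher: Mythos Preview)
Your proposal is correct and follows essentially the same approach as the paper: both reduce the claim to showing $\bR_l^T\bbrE^2\bR_m=0$ for $l\neq m$ and then invoke Lemma~\ref{lem:integral-orthogonal} term-by-term on the sum defining $\bansatz$. The paper simply works out the representative case $l=1$, $m=2$ and appeals to symmetry, whereas you give the full three-case analysis on $i$; your remarks about relabeling to fit the lemma's hypotheses and about non-uniqueness under repeated eigenvalues are accurate and go slightly beyond what the paper states.
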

\begin{proof}
  Let 
  $\bbrR = \left[\bR_1, \bR_2, \bR_3\right]$. As $\bbrR$ is an orthogonal 
  matrix, we have $\bbrR^{-1} = \bbrR^T$. To prove the lemma, it suffices to show that
  \begin{equation}\label{eq:consistency-Rj}
    \bR_j^T\bbrE^2\bR_i = \Vint{(\bOmega\cdot\bR_j)
    (\bOmega\cdot\bR_i)\bansatz} = 0,\quad \text{if}~j\not=i.
  \end{equation}
  The reason is that \eqref{eq:consistency-Rj} would indicate that
  $\bbrR^{-1}\bbrE^2\bbrR$ is a diagonal matrix, and therefore $\bR_j$,
  $j=1,2,3$, are the eigenvectors of $\bbrE^2$.
  
  In order to prove \eqref{eq:consistency-Rj}, consider the case $j=1$, $i=2$,
  \[
    \bR_1^T\bbrE^2\bR_2  = \Vint{(\bOmega\cdot\bR_1)
    (\bOmega\cdot\bR_2) \bansatz}
    = \dfrac1{2\pi} \int_{\bbS^2} (\bOmega\cdot\bR_1) (\bOmega\cdot\bR_2) 
    \sum_{i = 1}^3 w_i f(\bOmega\cdot\bR_i; 
    \gamma_i, \delta_i) \dd\bOmega.
  \]
  By Lemma \ref{lem:integral-orthogonal},
  \[
    \int_{\bbS^2} (\bOmega\cdot\bR_1) (\bOmega\cdot\bR_2) 
    w_k f(\bOmega\cdot\bR_k; \gamma_k, \delta_k)
    \dd\bOmega = 0, \quad k = 1,2,3.
  \]
  Therefore $\bR_1^T\bbrE^2\bR_2 = 0$. Similar arguments
  show that $\bR_1^T\bbrE^2\bR_3 = \bR_2^T\bbrE^2\bR_3 = 0$. 
\end{proof}

With the parameters $\bR_i$ determined, we now consider the consistency requirements under the coordinate
system $(\bR_1,\bR_2,\bR_3)$. In this coordinate system, $\bbrE^2$ is
a diagonal matrix. Also, as Lemma \ref{lem:Rj} specify $\bR_j$, 
$j=1,2,3$, to be the eigenvectors of $\bbrE^2$, consistency of all 
the non-diagonal elements of $\bbrE^2$ are naturally satisfied. 
Therefore we only need to look at the consistency of $E^0$, $\bE^1$, and all
the eigenvalues of $\bbrE^2$. This leaves us with 6 constraints. On the 
other hand, with $\bR_j$, $j = 1,2,3$ fixed, there are 9 parameters in the 
ansatz \eqref{eq:B2-ansatz}.
Denote
\begin{equation}\label{eq:sigma-def}
  \sigma_i = w_i \int_{-1}^1 \mu^2 f(\mu;\gamma_i,\delta_i)
  \dd\mu.
\end{equation}
The following lemma shows that once $\sigma_i$, $i = 1,2,3$ are specified, then $w_i$ 
for $i = 1,2,3$ would be determined by consistency constraints.
\begin{lemma}
  Let $\lambda_i$ be the eigenvalue corresponding to $\bR_i$. Then
  $w_i$, $\sigma_i$ and $\lambda_i$ satisfy the following constraints:
  \begin{equation}\label{eq:w-cond}
  \begin{split}
    & w_1 = 2\sigma_1 - (\sigma_2 + \sigma_3) -\lambda_1
    +\lambda_2 + \lambda_3,\\
    & w_2 = 2\sigma_2 - (\sigma_1 + \sigma_3) - \lambda_2
    +\lambda_1 + \lambda_3,\\
    & w_3 = 2\sigma_3 - (\sigma_1 + \sigma_2) - \lambda_3
    +\lambda_1 + \lambda_2.
  \end{split}
\end{equation}
\end{lemma}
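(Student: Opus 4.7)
The plan is to exploit the consistency conditions \eqref{eq:consistency} in the coordinate system aligned with the eigenvectors $\bR_1, \bR_2, \bR_3$, where $\bbrE^2$ is diagonal with eigenvalues $\lambda_1, \lambda_2, \lambda_3$, and then read off the weights $w_i$ by Lemma \ref{lem:integral-orthogonal}. The key identities to extract are the normalization $E^0 = w_1+w_2+w_3$ and the three diagonal second-moment constraints $\lambda_j = \bR_j^T\bbrE^2\bR_j$. Only four of these six remaining scalar equations are independent (their sum is the trace identity), but together with the definitions \eqref{eq:sigma-def} of $\sigma_i$ they will pin down $w_i$ as a function of $\sigma_i$ and $\lambda_i$.

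First, I would handle the zeroth moment. Applying the $n_l = n_m = 0$ case of Lemma \ref{lem:integral-orthogonal} to each of the three summands in \eqref{eq:B2-ansatz} and using the normalization $\int_{-1}^1 f(\mu;\gamma_i,\delta_i)\dd\mu = 1$ gives $\Vint{\bansatz} = w_1+w_2+w_3$. Consistency with $E^0 = \Trace(\bbrE^2) = \lambda_1+\lambda_2+\lambda_3$ then yields
\[
  w_1 + w_2 + w_3 = \lambda_1 + \lambda_2 + \lambda_3.
\]

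Next, I would compute $\lambda_j = \Vint{(\bOmega\cdot\bR_j)^2 \bansatz}$ by splitting into the $i=j$ term and the two $i \neq j$ terms. For the diagonal piece $i=j$, the second case of Lemma \ref{lem:integral-orthogonal} gives $\frac{w_j}{2\pi}\cdot 2\pi\int_{-1}^1 \mu^2 f(\mu;\gamma_j,\delta_j)\dd\mu = \sigma_j$. For the two off-diagonal pieces $i\neq j$, the axisymmetric function has axis $\bR_i$ while the factor $(\bOmega\cdot\bR_j)^2$ is perpendicular, so the third case of Lemma \ref{lem:integral-orthogonal} (with $n_l=2$, $n_m=n_k=0$) applies and gives $\frac{w_i}{2\pi}\cdot \pi\int_{-1}^1(1-\mu^2)f(\mu;\gamma_i,\delta_i)\dd\mu = \tfrac12(w_i-\sigma_i)$. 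Summing, consistency requires
\[
  \lambda_j = \sigma_j + \tfrac12\sum_{i\neq j}(w_i - \sigma_i), \qquad j=1,2,3.
\]

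Finally, I would solve this linear system. Substituting $\sum_{i\neq j} w_i = (\lambda_1+\lambda_2+\lambda_3) - w_j$ from the trace identity into the equation for $\lambda_j$ eliminates the unknown $w_i$'s with $i\neq j$ and leaves a single scalar equation for $w_j$; rearranging produces the stated formula, e.g.\ $w_1 = 2\sigma_1 - (\sigma_2+\sigma_3) - \lambda_1 + \lambda_2 + \lambda_3$, with the other two obtained by cyclic permutation. The only delicate step is the case analysis in applying Lemma \ref{lem:integral-orthogonal}, keeping track of which index labels the axisymmetric direction versus the perpendicular direction; once that bookkeeping is done, the rest is elementary algebra.
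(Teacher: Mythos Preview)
Your proposal is correct and follows essentially the same route as the paper: compute $\lambda_j = \Vint{(\bOmega\cdot\bR_j)^2\bansatz}$ via Lemma~\ref{lem:integral-orthogonal}, obtaining $\lambda_j = \sigma_j + \tfrac12\sum_{i\neq j}(w_i-\sigma_i)$, and then invert this linear system. The only cosmetic difference is that you first record the zeroth-moment identity $w_1+w_2+w_3 = \lambda_1+\lambda_2+\lambda_3$ and use it to eliminate $\sum_{i\neq j}w_i$, whereas the paper simply says it solves the three equations as a linear system in $w_i$; both lead to the same formula with the same amount of work.
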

\begin{proof}
  Firstly, 
  \[
  \lambda_1 = \Vint{(\bOmega\cdot\bR_1)^2 \bansatz} = \dfrac1{2\pi}
  \int_{\bbS^2} (\bOmega\cdot\bR_1)^2 \sum\limits_{i = 1}^3 w_i 
  f(\bOmega\cdot\bR_i; \gamma_i, \delta_i) \dd\bOmega.
  \]
  By Lemma \ref{lem:integral-orthogonal},
  \[
    \dfrac1{2\pi} \int_{\bbS^2} (\bOmega\cdot\bR_1)^2 w_1 
    f(\bOmega\cdot\bR_1; \gamma_1, \delta_1) \dd\bOmega 
    = w_1 \int_{-1}^1 \mu^2 f(\mu; \gamma_1, 
    \delta_1) \dd\mu 
    = \sigma_1.
  \]
  For $k = 2,3$, again by Lemma \ref{lem:integral-orthogonal},
  \[
    \begin{split}
      & \dfrac1{2\pi} \int_{\bbS^2} (\bOmega\cdot\bR_1)^2 w_k
      f(\bOmega\cdot\bR_k; \gamma_k, \delta_k) \dd\bOmega  \\
      = & \dfrac12 w_k \int_{-1}^1 (1-\mu^2) f(\mu; \gamma_k,
      \delta_k) \dd\mu 
      = \dfrac12 (w_k - \sigma_k).
    \end{split}
  \]
  Therefore, 
  \begin{equation}\label{eq:lambda1-consistency}
    \lambda_1 = \sigma_1 + \dfrac12 (w_2 - \sigma_2) + 
    \dfrac12 (w_3 - \sigma_3).
  \end{equation}
  By symmetry, we have
  \begin{equation}\label{eq:lambda23-consistency}
    \begin{split}
      & \lambda_2 = \sigma_2 + \dfrac12 (w_1 - \sigma_1) + 
      \dfrac12 (w_3 - \sigma_3), \\
      & \lambda_3 = \sigma_3 + \dfrac12 (w_1 - \sigma_1) + 
      \dfrac12 (w_2 - \sigma_2).
    \end{split}
  \end{equation}
  Rewriting \eqref{eq:lambda1-consistency} and
  \eqref{eq:lambda23-consistency} by solving the equations as a linear
  system of $w_i$ yields the final results \eqref{eq:w-cond}.
\end{proof}
Once $w_i$, $i=1,2,3$, are given, consistency
requires that $\gamma_i$ and $\delta_i$ satisfy
\begin{equation}\label{eq:moment-problem-1D}
  w_i \int_{-1}^1 \mu f(\mu;\gamma_i,\delta_i)
  \dd\mu = F_i,
\end{equation}
where $F_i = \bE^1\cdot\bR_i$. If $w_i = 0$, then the term $w_i 
f(\bOmega \cdot \bR_i; \gamma_i, \delta_i)$
does not appear in the ansatz \eqref{eq:B2-ansatz}. From now on we
assume $w_i \not= 0$. Recall that by definition, the function $f(\mu; \gamma, \delta)$ 
is a non-negative distribution on $\mu \in [-1,1]$, and its zeroth moment is $1$. Moreover,
the first and second-order moments of $f$ are respectively 
$\dfrac{F_i}{w_i}$ and $\dfrac{\sigma_i}{w_i}$. So,
combining \eqref{eq:sigma-def} and \eqref{eq:moment-problem-1D} define
a 1D moment problem. This means that once the value of the three parameters $\sigma_i$, 
$i = 1,2,3$ are specified, the consistency condition \eqref{eq:consistency} could 
be decomposed into three decoupled 1D moment problems
\begin{equation}\label{eq:consistency-decoupled}
  \left\{
  \begin{array}{l}
     \int_{-1}^1 \mu f(\mu;\gamma_i,\delta_i)
     \dd\mu = \dfrac{F_i}{w_i}, \\ [5mm]
     \int_{-1}^1 \mu^2 f(\mu;\gamma_i,\delta_i)
    \dd\mu = \dfrac{\sigma_i}{w_i},
  \end{array}\right.
\end{equation}
for $i = 1,2,3$.
According to \cite{CurFial91}, the realizability domains
of the 1D moment problems in \eqref{eq:consistency-decoupled}
are: 
\begin{equation}\label{eq:condition-realizable}
  \left(\dfrac{F_i}{w_i}\right)^2
  \leq\dfrac{\sigma_i}{w_i}\leq 1, \quad i = 1,2,3.
\end{equation}

A sufficient condition for the existence of non-negative
ansatz $\bansatz$ is $w_i \geq 0$, $i = 1,2,3$. It follows that 
a non-negative ansatz $\bansatz$ exists under the following conditions:
\begin{equation}\label{eq:condition-positive-ansatz}
  \left(\dfrac{F_i}{w_i}\right)^2
  \leq\dfrac{\sigma_i}{w_i}\leq 1, \quad
  w_i \geq 0, \quad i = 1,2,3.
\end{equation}
One would like to give a non-negative ansatz for as large a part of
the realizability domain as possible to have a realizable closure.
Before examining the non-negativity of the ansatz
\eqref{eq:B2-ansatz}, we give the following result, which is an
alternative characterization of the realizable moments:

\begin{lemma}\label{proposition-realizability}
  Let $\{\lambda_j,\bR_j\}$, $j=1,2,3$, be the eigenpairs of
  $\bbrE^2$, and $F_j = \bE^1\cdot\bR_j$. Then the realizability domain $\cM$
  given by \eqref{eq:realizablity-region} is
  \begin{equation}\label{eq:realizability-region-alternative}
    \cM = \left\{ \left(E^0, \bE^1, \bbrE^2\right) \, \left| \,
      0 < \sum\limits_{i=1}^3 \lambda_i = E^0,\, 
      \sum\limits_{i=1}^3 \dfrac{F_i^2}{\lambda_i} \leq
    E^0 \right.\right\}.
  \end{equation}
  In \eqref{eq:realizability-region-alternative},
the term $\dfrac{F_i^2}{\lambda_i} = 0$ is taken to be zero if $\lambda_i = 0$.
\end{lemma}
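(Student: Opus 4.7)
The plan is to work in the orthonormal eigenbasis $\{\bR_1,\bR_2,\bR_3\}$ of $\bbrE^2$. In this basis $\bbrE^2$ becomes $\Lambda := \mathrm{diag}(\lambda_1,\lambda_2,\lambda_3)$, the vector $\bE^1$ has coordinates $\bF = (F_1,F_2,F_3)^T$, and the trace constraint becomes $E^0 = \sum_i \lambda_i$. All that remains is to rewrite the condition that $M := E^0\bbrE^2 - \bE^1\otimes\bE^1$ is symmetric non-negative as $\sum_i F_i^2/\lambda_i \le E^0$ (under the stated convention), and to check that $\|\bE^1\|\le E^0$ then follows automatically.

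First I would handle the non-degenerate case in which all $\lambda_i>0$. Since $\Lambda^{1/2}$ is invertible, one has
\[
M \;=\; \Lambda^{1/2}\bigl(E^0\,\Identity - \Lambda^{-1/2}\bF\bF^T\Lambda^{-1/2}\bigr)\Lambda^{1/2},
\]
so $M$ is symmetric non-negative if and only if the inner factor is. The inner factor is $E^0\,\Identity$ minus a symmetric rank-one matrix whose single non-zero eigenvalue equals $\|\Lambda^{-1/2}\bF\|^2 = \sum_i F_i^2/\lambda_i$; it is therefore symmetric non-negative precisely when $\sum_i F_i^2/\lambda_i\le E^0$, which is exactly the inequality in \eqref{eq:realizability-region-alternative}.

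Next I would address the degenerate case in which at least one eigenvalue vanishes, say $\lambda_k=0$. Testing the quadratic form of $M$ on $\bR_k$ yields $\bR_k^T M\bR_k = E^0\lambda_k - F_k^2 = -F_k^2$, so symmetric non-negativity of $M$ forces $F_k=0$ for every index with $\lambda_k=0$. Under the convention that $F_i^2/\lambda_i=0$ whenever $\lambda_i=0$, the inequality in \eqref{eq:realizability-region-alternative} is then well-defined and reduces to the same inequality restricted to the positive-eigenvalue indices. The non-degenerate argument applied to that reduced block, together with the observation that $M$ is identically zero in the complementary directions, closes the equivalence in both directions.

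Finally, I would verify that $\|\bE^1\|\le E^0$ is redundant: since each $\lambda_i\le\sum_j\lambda_j = E^0$,
\[
\|\bE^1\|^2 \;=\; \sum_i F_i^2 \;\le\; \Bigl(\max_i\lambda_i\Bigr)\sum_i\frac{F_i^2}{\lambda_i} \;\le\; E^0\cdot E^0.
\]
The only delicate step is making sure the convention $F_i^2/\lambda_i=0$ when $\lambda_i=0$ is consistent with what the semidefinite condition actually imposes; once this is pinned down via the quadratic-form test on $\bR_k$, the rest is a routine linear-algebra manipulation built on the rank-one structure of $\bE^1\otimes\bE^1$.
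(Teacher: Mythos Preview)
Your argument is correct and follows essentially the same route as the paper: both proofs factor out $\Lambda^{1/2}$ on each side of $E^0\bbrE^2 - \bE^1\otimes\bE^1$ to reduce the positive-semidefiniteness check to that of $E^0\,\Identity$ minus a rank-one matrix, whose only nontrivial eigenvalue is $\sum_i F_i^2/\lambda_i$. Your treatment is in fact more complete, since you spell out the degenerate case (the paper merely says it follows by similar arguments) and explicitly verify that the condition $\|\bE^1\|\le E^0$ is implied by the others, which the paper does not address.
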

\begin{proof}
Denote the normalized first and second-order moments by
$\hat{\bE}^1 = \dfrac{\bE^1}{E^0}$ and $\hat{\bbrE}^2 = \dfrac{\bbrE^2}{E^0}$.
Let
\[
  \Lambda = \diag\left\{\dfrac{\lambda_1}{E^0}, \dfrac{\lambda_2}{E^0}, 
  \dfrac{\lambda_3}{E^0}\right\}, \quad
  \Lambda^{\frac12} = \diag\left\{\sqrt{\dfrac{\lambda_1}{E^0}}, \sqrt{\dfrac{\lambda_2}{E^0}}, 
  \sqrt{\dfrac{\lambda_3}{E^0}} \right\},
\]
and denote
\[
  \bbrR = [\bR_1, \bR_2, \bR_3], \quad \bbrT = \Lambda^{\frac12} \bbrR.
\]
Then
\[
  \hat{\bbrE}^2 = \bbrR^T \Lambda \bbrR = \bbrT^T \bbrT. 
\]
Assuming that $\lambda_i \not= 0$, $i = 1,2,3$. Then non-negativity of the matrix 
$\hat{\bbrE}^2 - \hat{\bE}^1 \otimes \hat{\bE}^1$
is equivalent to the non-negativity of the matrix $
  \Identity - \bbrT^{-T} \hat{\bE}^1 (\hat{\bE}^1)^T \bbrT^{-1}$,
which, in turn, is equivalent to 
$\|(\hat{\bE}^1)^T \bbrT^{-1}\|_2 \leq 1$, and therefore equivalent to
\begin{equation}\label{eq:realizable-cond}
  \sum\limits_{i = 1}^3 \dfrac{F_i^2}{\lambda_i} \leq E^0.
\end{equation}
The cases when there exists $i$ for which $\lambda_i = 0$ can be proved by entirely similar arguments.
\end{proof}
\begin{remark}
The above lemma could also be proved by applying the method for solving modified
eigenvalue problems proposed in \cite{yu1991recursive}. 
\end{remark}
Making use of Lemma \ref{proposition-realizability},
the realizability domain can be visualized as: take any point inside 
a triangle and let $\left(\dfrac{\lambda_1}{E^0}, \dfrac{\lambda_2}{E^0},
\dfrac{\lambda_3}{E^0}\right)$ be its barycentric coordinates.  
Then the corresponding $(F_1,F_2,F_3)$ lie in the ellipsoid 
\eqref{eq:realizable-cond}. 
Each side of the triangle corresponds to the cases where at least 
one eigenvalue of $\bbrE^2$ vanishes. In such cases non-negativity of
$\bansatz$ given in \eqref{eq:B2-ansatz} would impose the following
constraints on the first and second-order moments:
\begin{lemma}\label{lem:lambda2sigma}
  The non-negativity of the ansatz $\bansatz$ requires
  that if there exists $i = 1,2$ or $3$ such that $\lambda_i = 0$, then
  \[
  \left\{
    \begin{array}{l}
      F_i=0 \text{ and } \sigma_i = 0, \\
      |F_j| \leq \sigma_j \leq \lambda_j,
      \quad \text{for}~~\forall j \not= i.
    \end{array}\right.
  \]
  \end{lemma}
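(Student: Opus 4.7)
The approach is to exploit the moment identities \eqref{eq:lambda1-consistency}--\eqref{eq:lambda23-consistency} already derived, together with Cauchy--Schwarz and the 1D realizability bounds \eqref{eq:condition-realizable}. Without loss of generality assume $i = 1$, so $\lambda_1 = 0$. The claim $F_1 = 0$ follows immediately from Cauchy--Schwarz applied to the non-negative density $\bansatz$:
\[
F_1^2 = \Bigl(\int_{\bbS^2}(\bOmega\cdot\bR_1)\bansatz\dd\bOmega\Bigr)^2 \leq \int_{\bbS^2}(\bOmega\cdot\bR_1)^2\bansatz\dd\bOmega \cdot \int_{\bbS^2}\bansatz\dd\bOmega = \lambda_1 E^0 = 0.
\]

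To obtain $\sigma_1 = 0$ and $\sigma_j = w_j$ for $j = 2,3$, I would rewrite identity \eqref{eq:lambda1-consistency} as
\[
0 = \sigma_1 + \tfrac{1}{2}(w_2 - \sigma_2) + \tfrac{1}{2}(w_3 - \sigma_3),
\]
and argue that each of the three summands is non-negative. Since $f \geq 0$ integrates to $1$ and $\mu^2 \in [0,1]$, the ratio $\sigma_j/w_j$ lies in $[0,1]$ whenever $w_j > 0$, giving $0 \leq \sigma_j \leq w_j$; together with $w_j \geq 0$, which is the non-negativity requirement from the sufficient condition \eqref{eq:condition-positive-ansatz}, all three terms are $\geq 0$. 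Three non-negative summands summing to zero must each vanish, which yields the stated values.

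Given $\sigma_1 = 0$ and $\sigma_k = w_k$ for the remaining index $k \neq 1, j$, substituting into the companion identity \eqref{eq:lambda23-consistency} gives $\lambda_j = \sigma_j + \tfrac{1}{2}w_1 \geq \sigma_j$, which is the upper bound $\sigma_j \leq \lambda_j$. For the lower bound $|F_j| \leq \sigma_j$, I would invoke the 1D realizability \eqref{eq:condition-realizable} for the $j$-th decoupled problem \eqref{eq:consistency-decoupled}, namely $F_j^2 \leq w_j \sigma_j$; using $w_j = \sigma_j$ this reduces to $F_j^2 \leq \sigma_j^2$.

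The main obstacle is making precise that $\bansatz \geq 0$ genuinely \emph{forces} each $w_j \geq 0$, since three axisymmetric components could in principle conspire to cancel negative contributions. The cleanest route I see is a support argument: $\lambda_1 = 0$ combined with $\bansatz \geq 0$ localizes the support of $\bansatz$ to the great circle $\bOmega \cdot \bR_1 = 0$, and the axisymmetric structure of each component about its own $\bR_j$-axis then enforces sign constraints on each $w_j$. Edge cases $w_j = 0$ (where the $j$-th component is simply absent from the ansatz, so $\sigma_j = F_j = 0$ by definition) should be flagged separately but are immediate.
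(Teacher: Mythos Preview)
Your proof is correct and takes a somewhat different route from the paper. The paper argues geometrically: from $\lambda_1 = 0$ and $\bansatz \geq 0$ it localizes the support of $\bansatz$ to the great circle $\bOmega\cdot\bR_1 = 0$, then observes that for $j = 2,3$ the only way an axisymmetric density about $\bR_j$ can live on that circle is as $\alpha_j^-\delta(\mu+1) + \alpha_j^+\delta(\mu-1)$, whence $\sigma_j = w_j$ and $|F_j|\leq\sigma_j$; the upper bound $\sigma_j \leq \lambda_j$ is then read off from \eqref{eq:w-cond} after substituting $\sigma_1 = \lambda_1 = 0$ and invoking $w_1 \geq 0$. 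You instead exploit the single identity \eqref{eq:lambda1-consistency} as a sum of three non-negative terms equal to zero, obtaining $\sigma_1 = 0$ and $\sigma_j = w_j$ in one stroke, and finish with the same 1D realizability bound and consistency relations. Your argument is more algebraic and arguably tidier; the paper's buys the extra structural information that each $f_j$ degenerates to point masses at $\pm 1$, which is not needed for the lemma but motivates later constructions.

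Regarding your flagged obstacle: the paper does not derive $w_j \geq 0$ from $\bansatz \geq 0$ either. It simply asserts ``If $\bansatz$ is non-negative, then $w_1 \geq 0$'' and elsewhere appeals directly to \eqref{eq:condition-positive-ansatz}. In context the lemma is really operating under the sufficient condition \eqref{eq:condition-positive-ansatz}, which includes $w_i \geq 0$ as a hypothesis, so your proof stands at the same level of rigor as the paper's. The support argument you sketch in your final paragraph is exactly the mechanism the paper leans on; carrying it out carefully would close the gap for both proofs, but that is more than the paper itself does.
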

\begin{proof}
  Consider the case $i=1$. Since
  \begin{equation}\label{eq:cond-lambda-zero}
    0 = \lambda_1 = \int_{\bbS^2} (\bOmega \cdot \bR_1)^2
    \bansatz(\bOmega) \dd \bOmega,
  \end{equation}
  then $\bansatz$ can only be non-zero when $\bOmega \cdot \bR_1 = 0$.
  This gives
  \[
    F_1 = \int_{\bbS^2} (\bOmega \cdot \bR_1) \bansatz(\bOmega)
    \dd \bOmega = 0.
  \]
  To prove $\sigma_1 = 0$, let us study two cases.  
  \begin{enumerate}
    \item For $w_1 = 0$, it can be seen from \eqref{eq:sigma-def} 
      that $\sigma_1 = 0$.
    \item If $w_1 \not= 0$. Recall that $\bansatz$ can only be non-zero when 
      $\bOmega \cdot \bR_1 = 0$. Then \eqref{eq:sigma-def} shows $\sigma_1 = 0$. 
  \end{enumerate}
  
  Next, we show that $\sigma_j \geq |F_j|$, $j = 2,3$. We look at 
  two cases.
  \begin{enumerate}
    \item In the case that $w_j = 0$, by \eqref{eq:moment-problem-1D} we
      have $F_j = 0$, and by \eqref{eq:sigma-def} we see that $\sigma_j = 0$.
      Hence $\sigma_j \geq |F_j|$.
    \item If $w_j \not= 0$. Again, note that $\bansatz$ can only be non-zero 
      when $\bOmega \cdot \bR_1 = 0$, therefore for $j \not= 1$,
      the function $f(\bOmega\cdot\bR_j; \gamma_j, \delta_j)$
      has the form 
      $$f(\bOmega\cdot\bR_j; \gamma_j, \delta_j) = \alpha_j^-
      \delta(\bOmega \cdot \bR_j + 1) + \alpha_j^+ \delta(\bOmega \cdot \bR_j - 1).$$
      From \eqref{eq:sigma-def} we know that in such cases $\sigma_j = w_j$. 
      Combine this with the left inequality in 
      \eqref{eq:condition-positive-ansatz}, and we have $\sigma_j \geq |F_j|$.
  \end{enumerate}

  Finally, we prove $\sigma_j \leq \lambda_j$, $j = 2,3$. 
  Plugging $\sigma_1 = \lambda_1 = 0$ into \eqref{eq:w-cond} gives
  \[
    \sigma_2 - \sigma_3 = \lambda_2 - \lambda_3,
  \]
  and
  \[
    \sigma_2 + \sigma_3 = \lambda_2 + \lambda_3 - w_1,
  \]
  If $\bansatz$ is non-negative, then $w_1 \geq 0$. Combine the above and notice
  that $\lambda_2 + \lambda_3 = E^0$, we have
  \[
    \sigma_j \leq \lambda_j, \quad j = 2,3.
  \]
  The proofs for $i=2,3$, follows in a similar manner.
\end{proof}
\begin{remark}
As a special case of Lemma \ref{lem:lambda2sigma}, if there exists $j$ such that 
$\lambda_j = E^0$, and $\lambda_i = 0$ for $i \not= j$, then 
  \[
    \left\{
      \begin{array}{l}
        |F_j| \leq \sigma_j \leq \lambda_j = E^0, \\
        F_i=0 \text{ and } \sigma_i = 0,\quad\text{for}~~\forall i \not= j. 
      \end{array}\right.
    \]
  \end{remark}
From Lemma \ref{lem:lambda2sigma}, it is clear that when $\lambda_i = 0$ is the only zero
eigenvalue of $\bbrE^2$, the region for which the ansatz \eqref{eq:B2-ansatz} 
admits a non-negative distribution is limited to the rectangle $|F_j| \leq \lambda_j$,
$j\not=i$. We point out that this rectangle can cover only 4 points for the boundary of
the realizability domain in \eqref{eq:realizable-cond}, which in this case 
becomes the ellipse
\[
  \dfrac{F_j^2}{\lambda_j} + \dfrac{F_k^2}{\lambda_k} = E^0,
  \quad j \not= k.
\]

For other boundary moments, we have the following result:
\begin{lemma}\label{lem:realizable-bound}
  Suppose $\lambda_i > 0$, $i = 1,2,3$.
  Then on the boundary of the realizability domain, where
  \begin{equation}\label{eq:cond-boundary}
    \dfrac{F_1^2}{\lambda_1} + \dfrac{F_2^2}{\lambda_2}
    + \dfrac{F_3^2}{\lambda_3} = E^0,
  \end{equation}
  there are only two kinds of moments for which $\bansatz$ can be non-negative:
  \begin{enumerate}
    \item   $\exists i$, such that $\lambda_i = \dfrac{F_i^2}{E^0}$.
      Meanwhile for $j,k\not=i$, the relationships $\lambda_j = \lambda_k$  and
      $F_j = F_k = 0$ hold.
    \item $\forall j = 1,2,3$, the constraint $|F_j| = \lambda_j$ is satisfied. 
  \end{enumerate}
\end{lemma}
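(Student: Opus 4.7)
The plan is to bound $\sum_{i=1}^{3} F_i^2/\lambda_i$ from above by $E^0$ whenever $\bansatz$ is non-negative, and then to read off from the saturation in \eqref{eq:cond-boundary} that extremely rigid constraints must hold on $(\lambda_i, F_i)$.

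To begin, I would collect three elementary inequalities from \eqref{eq:condition-positive-ansatz}: $w_i \geq 0$, $\sigma_i \leq w_i$, and $F_i^2 \leq w_i\sigma_i$ for every $i$. Summing the three identities in \eqref{eq:lambda1-consistency}--\eqref{eq:lambda23-consistency} yields $\sum_i w_i = \sum_i \lambda_i = E^0$, while each identity separately gives the key rewriting $\lambda_i - \sigma_i = \tfrac12\sum_{j\neq i}(w_j - \sigma_j) \geq 0$, so in particular $\sigma_i \leq \lambda_i$.

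Chaining these inequalities produces $F_i^2 \leq w_i\sigma_i \leq w_i\lambda_i$, and therefore $F_i^2/\lambda_i \leq w_i$ for each $i$. Summing over $i$ and using $\sum_i w_i = E^0$ gives $\sum_i F_i^2/\lambda_i \leq E^0$. The boundary condition \eqref{eq:cond-boundary} saturates this bound, which forces equality in every intermediate step: for each $i$, $F_i^2 = w_i\sigma_i$ and $w_i\sigma_i = w_i\lambda_i$. Each index therefore falls into one of two regimes, (A) $w_i = 0$, which forces $F_i = \sigma_i = 0$, or (B) $w_i > 0$, which gives $\sigma_i = \lambda_i$ and $F_i^2 = w_i\lambda_i$.

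The final step is a case split on the number of indices in regime (B). The critical tool is that whenever $\sigma_i = \lambda_i$, the identity $\lambda_i - \sigma_i = \tfrac12\sum_{j\neq i}(w_j - \sigma_j) = 0$ together with $w_j \geq \sigma_j$ forces $w_j = \sigma_j$ for every $j \neq i$. If all three indices lie in (B), this propagation gives $w_j = \sigma_j = \lambda_j$ throughout, hence $|F_j| = \lambda_j$, which is conclusion 2. If exactly one index, say $i=1$, lies in (B), then $F_2 = F_3 = \sigma_2 = \sigma_3 = 0$; \eqref{eq:lambda23-consistency} yields $\lambda_2 = \lambda_3 = \tfrac12(w_1 - \sigma_1)$, and $w_1 = E^0$ together with $F_1^2 = w_1\lambda_1$ gives $\lambda_1 = F_1^2/E^0$, which is conclusion 1. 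The remaining possibilities must be excluded: if exactly two indices sit in (B), the propagation step forces the third $\lambda$ to vanish via \eqref{eq:lambda23-consistency}, contradicting the hypothesis $\lambda_i>0$; and if no index is in (B), then $E^0=\sum_i w_i=0$, contradicting $E^0>0$. The main delicacy I expect lies in this last step, where the interlocking degeneracies among $F_i$, $\sigma_i$, and $w_i$ must be disentangled consistently so that no combinatorial case is overlooked.
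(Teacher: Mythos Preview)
Your argument is correct, and it is genuinely different from the paper's. The paper invokes Kershaw's structural observation that on the boundary \eqref{eq:cond-boundary} the covariance matrix $\bbrV = \bbrE^2/E^0 - (\bE^1/E^0)(\bE^1/E^0)^T$ has a null eigenvector $\bU$, so any non-negative distribution must be supported on the circle $\bOmega\cdot\bU = \bE^1\cdot\bU/E^0$; it then splits cases according to whether $\bU$ is aligned with some $\bR_i$ (which forces the single-ring ansatz \eqref{eq:single-delta-dist} and hence conclusion~1) or not (which forces a sum of Diracs at $\pm\bR_i$ and hence conclusion~2). By contrast, you bypass the support argument entirely and work only with the algebraic consequences of \eqref{eq:condition-positive-ansatz} and the consistency relations \eqref{eq:lambda1-consistency}--\eqref{eq:lambda23-consistency}: the chain $F_i^2 \le w_i\sigma_i \le w_i\lambda_i$ together with $\sum_i w_i = E^0$ recovers the realizability bound, and saturation collapses directly into a finite case analysis on the index set $\{i : w_i>0\}$. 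Your route is more elementary and self-contained (no appeal to \cite{kershaw1976flux}) and makes the combinatorics of the extremal cases transparent; the paper's route, on the other hand, reveals the actual geometric shape of the limiting ansatz in each case, which your argument does not directly see. Both proofs rely on reading \eqref{eq:condition-positive-ansatz} as necessary for non-negativity of $\bansatz$, which is how the paper itself uses it throughout (cf.\ the proof of Lemma~\ref{lem:lambda2sigma}).
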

\begin{proof}
  Let the covariance matrix of the distribution function 
  be
  \[
    \bbrV = \dfrac{\bbrE^2}{E^0}-\left(\dfrac{\bE^1}{E^0}
    \right)\left(\dfrac{\bE^1}{E^0}\right)^T.
  \]
  If
  \[
    \dfrac{F_1^2}{\lambda_1} + \dfrac{F_2^2}{\lambda_2}
    + \dfrac{F_3^2}{\lambda_3} = E^0,
  \]
  then there exists at least one zero eigenvalue for $\bbrV$.
  Denote the corresponding eigenvector by $\bU$, and 
  \cite{kershaw1976flux} has shown that
  any non-negative distribution could be non-zero only when 
  $\bOmega \cdot \bU = \dfrac1{E^0}(\bE^1 \cdot \bU)$. We
  will repeatedly make use of this fact in the following 
  discussions.

  We study the two possible cases:
  \begin{enumerate}
    \item Suppose $\bU$ is aligned with some eigenvector of $\bbrE^2$. 
      Without loss of generality, we assume $\bR_3\slash\slash\bU$. 
      Then a non-negative distribution could be non-zero only on 
      $\bOmega\cdot\bR_3 = \dfrac{F_3}{E^0}$. In addition,
      \begin{equation}\label{eq:eigenvector-V}
        0 = \bU^T \bbrV \bU = \bR_3^T \left[\dfrac1{E^0} \bbrE^2 - 
          \left(\dfrac{\bE^1}{E^0} \right) \left(\dfrac{\bE^1}{E^0}
        \right)^T\right] \bR_3 = \dfrac{\lambda_3}{E^0} - \left(
        \dfrac{F_3}{E^0} \right)^2,
      \end{equation}
      which gives $\lambda_3 = \dfrac{F_3^2}{E^0}$.
      If $F_3 = 0$ then $\lambda_3 = 0$, which has been 
      ruled out in our assumptions. So $F_3 \not= 0$, which means a non-negative 
      distribution \eqref{eq:B2-ansatz} can only be
      \begin{equation}\label{eq:single-delta-dist}
        \bansatz = \dfrac{E^0}{2 \pi}
        \delta\left(\bOmega\cdot\bR_3 - \dfrac{F_3}{E^0}\right).
      \end{equation}
      Therefore $w_1 = w_2 = 0$, and by \eqref{eq:sigma-def} and 
      \eqref{eq:moment-problem-1D} we would have
      $\sigma_1 = \sigma_2 = F_1 = F_2 = 0$. Substituting this into \eqref{eq:w-cond}
      gives $\lambda_1 = \lambda_2 = \dfrac12(w_3 - \lambda_3)$.
      Conversely, moments satisfying $\lambda_1 = \lambda_2$ and $F_1 = F_2 = 0$
      in addition to $\lambda_3 = \dfrac{F_3^2}{E^0}$ could be generated by the ansatz 
      \eqref{eq:single-delta-dist}. 

    \item Consider the case when $\bU$ is not aligned to any $\bR_j$. 
      The only way to give a non-negative 
      distribution for \eqref{eq:B2-ansatz} in this case
      is
      \[
        \bansatz = \sum\limits_{i = 1}^3 [ \alpha_i^+ \delta(\bOmega \cdot \bR_i
        - 1) + \alpha_i^- \delta(\bOmega \cdot \bR_i + 1) ].
      \]
      Hence $\sigma_j = w_j $, $j=1,2,3$. Combining these with \eqref{eq:w-cond}
      gives $\sigma_j = \lambda_j$, $j = 1,2,3$.
      But condition \eqref{eq:condition-positive-ansatz} 
      require
      \begin{equation}
        \left|F_j\right| \leq \lambda_j,\quad
        j = 1,2,3.
      \end{equation}
      Recall assumption \eqref{eq:cond-boundary}, and
      notice  
      \begin{equation}
        E^0 = \dfrac{F_1^2}{\lambda_1}
        +\dfrac{F_2^2}{\lambda_2}
        +\dfrac{F_3^2}{\lambda_3}
        \leq\lambda_1+\lambda_2+\lambda_3=E^0.
      \end{equation}
      For all inequalities to hold, we need 
      \begin{equation}\label{cond-boundary-ellipsoid}
        \left|F_j\right|=\lambda_j,~~j=1,2,3.
      \end{equation}
      Conversely, for moments satisfying condition \eqref{cond-boundary-ellipsoid}, 
      choosing 
      \begin{equation}
        \sigma_j = \lambda_j,\quad j = 1,2,3.
      \end{equation}
      would give a non-negative ansatz.
  \end{enumerate}
  The proof is completed.
\end{proof}

We now turn to specifying the formula for $f$. We take $f$
to be the beta distribution used in the $B_2$ ansatz for slab geometry
\begin{equation}\label{eq:beta-dist}
  f(\mu; \gamma, \delta) = \mathcal{F}(\mu; \gamma, \delta),
  \quad 
  \xi  = \frac\gamma\delta, \quad
  \eta = \frac{1 - \gamma}\delta.
\end{equation}

Retaining only one term in \eqref{eq:B2-ansatz} would provide the same 
ansatz as the one-dimensional $B_2$ ansatz which we studied previously 
\cite{alldredge2016approximating}. Taking $\xi = \eta = 1$ in equation
\eqref{eq:beta-dist} would give $f$ as a constant function. If either $\xi$
or $\eta$ approach zero, the limit of the function $f$ is a Dirac function. 
If both of them go to zeros at a fixed rate, the function $f$ will become 
a combination of two Dirac functions. This capacity of \eqref{eq:beta-dist}
to interpolate between the constant function and Dirac functions is a 
feature it shares with the $M_2$ ansatz. Also, for slab geometry, the $B_2$ 
model possesses numerous nice properties similar to the $M_2$ model;
therefore, we use it as building blocks for three-dimensional ansatz.

If \eqref{eq:beta-dist} is the distribution function $f$ in 
\eqref{eq:sigma-def} and \eqref{eq:moment-problem-1D}, then for
$\sigma_i$, $w_i$ and $F_i$ satisfying the realizability condition 
\eqref{eq:condition-realizable}, we have  
\[
\xi_i \geq 0,\quad \eta_i \geq 0,
\]
which gives an integrable function for \eqref{eq:beta-dist}.
For the above cases, the parameters $\gamma_i$ and $\delta_i$ are given 
as follow:
\begin{lemma}\label{lem:gamma-delta}
  If \eqref{eq:condition-positive-ansatz} is fulfilled, we have
  \begin{equation}\label{eq:gamma-cond}
    \gamma_i = \frac{F_i  + w_i }{ 2 w_i} \quand 
    \delta_i = -\frac{ F_i^2 
      - \sigma_i w_i}{w_i^2 - \sigma_i w_i},\quad
    \forall i = 1,2,3.
  \end{equation}
\end{lemma}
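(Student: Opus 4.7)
The plan is to translate the two consistency equations in \eqref{eq:consistency-decoupled} into explicit relations for $\gamma_i$ and $\delta_i$ by computing the first and second moments of the beta distribution $\mathcal{F}(\mu;\gamma,\delta)$ on $[-1,1]$ and solving a simple two-equation system.

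First, I would compute $\int_{-1}^1 \mu\,\mathcal{F}(\mu;\gamma,\delta)\dd\mu$ and $\int_{-1}^1 \mu^2\,\mathcal{F}(\mu;\gamma,\delta)\dd\mu$ explicitly. The cleanest route is the affine change of variables $\mu = 2\nu - 1$, under which $\mathcal{F}$ becomes a standard beta density on $[0,1]$ with shape parameters $\xi = \gamma/\delta$ and $\eta = (1-\gamma)/\delta$. Using the classical identities $\mathbb{E}[\nu] = \xi/(\xi+\eta)$ and $\mathbb{E}[\nu^2] = \xi(\xi+1)/((\xi+\eta)(\xi+\eta+1))$ together with $\xi+\eta = 1/\delta$ and $\xi - \eta = (2\gamma-1)/\delta$, one obtains after a short simplification
\[
  \int_{-1}^{1} \mu\, \mathcal{F}(\mu;\gamma,\delta)\dd\mu = 2\gamma - 1, \qquad
  \int_{-1}^{1} \mu^2\, \mathcal{F}(\mu;\gamma,\delta)\dd\mu
  = \frac{(2\gamma-1)^2 + \delta}{1+\delta}.
\]

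Next, I would substitute these expressions into the two decoupled consistency relations \eqref{eq:consistency-decoupled}, obtaining
\[
  2\gamma_i - 1 = \frac{F_i}{w_i}, \qquad
  \frac{(2\gamma_i - 1)^2 + \delta_i}{1 + \delta_i} = \frac{\sigma_i}{w_i}.
\]
The first equation immediately gives $\gamma_i = (F_i + w_i)/(2w_i)$, matching the claimed formula. Plugging this value into the second equation yields a linear equation in $\delta_i$, namely $(F_i/w_i)^2 + \delta_i = (\sigma_i/w_i)(1+\delta_i)$, which rearranges to $\delta_i(\sigma_i/w_i - 1) = \sigma_i/w_i - (F_i/w_i)^2$. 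Clearing the denominators by multiplying through by $w_i^2$ then produces $\delta_i = -(F_i^2 - \sigma_i w_i)/(w_i^2 - \sigma_i w_i)$, which is precisely \eqref{eq:gamma-cond}.

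There is no real obstacle here, only a bookkeeping concern: one must check that the division in the expression for $\delta_i$ is legitimate, i.e.\ that $w_i^2 - \sigma_i w_i \neq 0$. Since we assume $w_i \neq 0$ (the hypothesis introduced just before \eqref{eq:moment-problem-1D}) and since condition \eqref{eq:condition-positive-ansatz} gives $\sigma_i/w_i \leq 1$ with equality forcing $f$ to degenerate into Dirac masses (the limiting regime $\delta_i \to 0$, or $\xi_i, \eta_i \to 0$), the formula is well defined in the non-degenerate regime of interest, and the degenerate case can be recovered as a limit. This completes the argument.
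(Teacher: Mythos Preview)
Your proposal is correct and follows essentially the same route as the paper: both reduce to the standard beta distribution via the affine change of variables, use the classical formulas for its first two moments to obtain $\int_{-1}^1 \mu\,\mathcal{F}\dd\mu = 2\gamma-1$ and an equivalent expression for the second moment (the paper writes it as $4\gamma(\gamma-1)/(1+\delta)+1$, which equals your $((2\gamma-1)^2+\delta)/(1+\delta)$), and then solve the resulting two-equation system. There is a harmless sign slip in your intermediate rearrangement (it should read $\delta_i(1-\sigma_i/w_i)=\sigma_i/w_i-(F_i/w_i)^2$), but your final formula for $\delta_i$ is correct.
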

\begin{proof}
  Note that the standard $\beta$ distribution
  $\dfrac1{\Beta(\xi,\eta)} x^{\xi - 1} (1 - x)^{\eta - 1}$ has the
  properties \cite{johnson1970discrete}:
  \[
  \begin{split}
    & \int_0^1 x \dfrac1{\Beta(\xi,\eta)} x^{\xi - 1}
    (1 - x)^{\eta - 1} = \dfrac{\xi}{\xi + \eta}, \\
    & \int_0^1 x^2 \dfrac1{\Beta(\xi,\eta)} x^{\xi - 1}
    (1 - x)^{\eta - 1} = \dfrac{\xi (\xi + 1)}
    {(\xi + \eta)(\xi + \eta + 1)}.
  \end{split}
  \]
  Therefore,
  \begin{equation}\label{eq:1D-1moment}
    \begin{split}
      & \int_{-1}^1 \mu f(\mu;\gamma_i,\delta_i)
      \dd\mu  
      \xlongequal{x = \frac12 (1 + \mu)} 
      \int_0^1 (2x - 1) \dfrac1{\Beta(\xi_i, \eta_i)} 
      x^{\xi_i - 1} (1-x)^{\eta_i - 1} \dd x\\
      = & 2\int_0^1 x \dfrac1{\Beta(\xi_i, \eta_i)}
      x^{\xi_i - 1} (1 - x)^{\eta_i - 1} \dd x
      - 1 
      = 2 \dfrac{\xi_i}{\xi_i + \eta_i} - 1 
      = 2 \gamma_i - 1.
    \end{split}
  \end{equation}
  Also,
  \begin{equation}\label{eq:1D-2moment}
    \begin{split}
      & \int_{-1}^1 \mu^2 f(\mu;\gamma_i,\delta_i)
      \dd\mu  
      \xlongequal{x = \frac12 (1 + \mu)} 
      \int_0^1 (2x - 1)^2 \dfrac1{\Beta(\xi_i, \eta_i)} 
      x^{\xi_i - 1} (1-x)^{\eta_i - 1} \dd x\\
      = & 4\int_0^1 x^2 \dfrac1{\Beta(\xi_i, \eta_i)}
      x^{\xi_i - 1} (1 - x)^{\eta_i - 1} \dd x
      - 4\int_0^1 x \dfrac1{\Beta(\xi_i, \eta_i)}
      x^{\xi_i - 1} (1 - x)^{\eta_i - 1} \dd x
      + 1 \\
      = & 4 \dfrac{\xi_i (\xi_i + 1)}{(\xi_i + \eta_i)
        (\xi_i + \eta_i + 1)} -4 \dfrac{\xi_i}{\xi_i + \eta_i}
      + 1 
      = 4 \dfrac{\gamma_i(\gamma_i - 1)}{1 + \delta_i} + 1.
    \end{split}
  \end{equation}
  Combining \eqref{eq:1D-1moment}, \eqref{eq:1D-2moment} with
  \eqref{eq:sigma-def}, \eqref{eq:moment-problem-1D}  
  gives us \eqref{eq:gamma-cond}.
\end{proof}
Note that \eqref{eq:sigma-def}, \eqref{eq:moment-problem-1D}, and
\eqref{eq:w-cond} together are the necessary and sufficient conditions 
for consistency constraints to all known
moments. This leaves $\sigma_i$, $i=1,2,3$, to be the three free
parameters. We shall return to the problem of determining $\sigma_i$
later. For the present, we assume $\sigma_i$, $i = 1,2,3$ are all given, and the
following lemma gives the closure relationship of the $B_2$ model.
\begin{lemma}\label{lem:E3}
  Let
  $\mathrm{\boldsymbol{R}} = [\bR_1, \bR_2, \bR_3] \in \bbR^{3\times
    3}$,
  and denote by $R_{ij}$ the entries of the matrix
  $\mathrm{\boldsymbol{R}}$, the flux closure is then given by
  $\bff(E^0, \bE^1, \bbrE^2)$, which relies on $\bE^1$, $\bbrE^2$ and
  $\bbrE^3$, with $\bbrE^3$ given as
\[
E^3_{ijk} =   \Vint{ (\bOmega\cdot\bR_l)
  (\bOmega\cdot\bR_m)
  (\bOmega\cdot\bR_n) \bansatz}
R_{il} R_{jm} R_{kn},
\]
where the Einstein summation convention is used.
  For distribution ansatz $\bansatz$ given by \eqref{eq:B2-ansatz},
  \begin{equation}\label{eq:B2-closure}
    \begin{aligned}
      &\Vint{ (\bOmega\cdot\bR_l) (\bOmega\cdot\bR_m)
    (\bOmega\cdot\bR_n) \bansatz} = \\ & \qquad \qquad \qquad \left
      \{
        \begin{array}{l}
          \dfrac{ F_l (\sigma_l^2 
          + 2 F_l^2 - 3 w_l \sigma_l)}
          {2 F_l^2 - w_l \sigma_l - w_l^2},
          ~~\text{if } l = m = n,\\ [4mm]
          \dfrac{ F_l }{2}\left(1-
            \dfrac{ \sigma_l^2 
            + 2 F_l^2 - 3 w_l \sigma_l }
            { 2 F_l^2 - w_l \sigma_l - w_l^2 }
          \right),~~\text{if }m=n,~m\not=l,\\ [4mm]
          0,~~\text{if}~l\not=m\not=n.
        \end{array}\right.
      \end{aligned}
    \end{equation}
  \end{lemma}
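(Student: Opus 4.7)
The plan is to reduce the three-dimensional integral defining $E^3_{ijk}$ to three decoupled one-dimensional moment problems in the $\{\bR_1,\bR_2,\bR_3\}$ frame, then invoke Lemma~\ref{lem:integral-orthogonal} and the beta-distribution identities to produce the closed form.

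First, since $\bbrR=[\bR_1,\bR_2,\bR_3]$ is orthogonal with $\det(\bbrR)=1$, the identity $\be_i=\sum_l R_{il}\bR_l$ gives $\bOmega\cdot\be_i=\sum_l R_{il}(\bOmega\cdot\bR_l)$. Substituting into $E^3_{ijk}=\Vint{(\bOmega\cdot\be_i)(\bOmega\cdot\be_j)(\bOmega\cdot\be_k)\bansatz}$ produces the stated tensor transformation. So it suffices to compute the integrals in the $\bR$-frame, which I would do by plugging the ansatz \eqref{eq:B2-ansatz} inside and applying Lemma~\ref{lem:integral-orthogonal} to each of the three axisymmetric summands separately.

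Next, I would split into the three cases. When $l=m=n$, among the three summands only the one whose axis is $\bR_l$ has even exponents in the two transverse directions; the other two carry an odd transverse exponent $3$ and vanish by Lemma~\ref{lem:integral-orthogonal}. This reduces the integral to $w_l\int_{-1}^1\mu^3 f(\mu;\gamma_l,\delta_l)\dd\mu$. When $m=n\neq l$, only the $i=l$ summand survives (the $i=m$ and $i=\text{third}$ summands each have exactly one odd transverse exponent, namely $n_l=1$), and the third case of Lemma~\ref{lem:integral-orthogonal} yields $\tfrac{w_l}{2}\int_{-1}^1(1-\mu^2)\mu f(\mu;\gamma_l,\delta_l)\dd\mu$. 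Writing $(1-\mu^2)\mu=\mu-\mu^3$ and using \eqref{eq:moment-problem-1D} to recognize $w_l\int\mu f\dd\mu=F_l$, this becomes $\tfrac12\bigl(F_l-w_l\int\mu^3 f\dd\mu\bigr)$, which exhibits the factored structure already present in \eqref{eq:B2-closure}. When $l,m,n$ are all distinct, every summand carries two odd transverse exponents and the first case of Lemma~\ref{lem:integral-orthogonal} forces the integral to be zero.

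The remaining work, and the main technical obstacle, is to simplify $w_l\int_{-1}^1\mu^3 f(\mu;\gamma_l,\delta_l)\dd\mu$ into the rational expression $\frac{F_l(\sigma_l^2+2F_l^2-3w_l\sigma_l)}{2F_l^2-w_l\sigma_l-w_l^2}$. For this I would use the substitution $x=\tfrac12(1+\mu)$ that already appeared in the proof of Lemma~\ref{lem:gamma-delta} to express $\int_{-1}^1\mu^3 f\dd\mu$ in terms of the first three raw moments $\xi/(\xi+\eta)$, $\xi(\xi+1)/[(\xi+\eta)(\xi+\eta+1)]$, and $\xi(\xi+1)(\xi+2)/[(\xi+\eta)(\xi+\eta+1)(\xi+\eta+2)]$ of the standard beta distribution on $[0,1]$. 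Substituting $\xi=\gamma_l/\delta_l$, $\eta=(1-\gamma_l)/\delta_l$ (so $\xi+\eta=1/\delta_l$) and then replacing $\gamma_l,\delta_l$ via Lemma~\ref{lem:gamma-delta}'s formulas \eqref{eq:gamma-cond} turns everything into a rational function of $F_l,\sigma_l,w_l$. A cross-multiplication check, or equivalently computing the third moment directly from the recurrence $\int x^{k+1}(1-x)^{\eta-1}\dd x$-type relations that the beta moments satisfy, should collapse this to the claimed ratio. The second-case identity then follows automatically from $\tfrac12(F_l-\text{Case 1})$, as sketched above.
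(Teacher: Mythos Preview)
Your proposal is correct and follows essentially the same route as the paper: reduce to the $\bR$-frame, invoke Lemma~\ref{lem:integral-orthogonal} termwise on the three axisymmetric summands, and then compute $w_l\int_{-1}^1\mu^3 f(\mu;\gamma_l,\delta_l)\dd\mu$ via the substitution $x=\tfrac12(1+\mu)$ and the beta raw-moment formulas, substituting \eqref{eq:gamma-cond} at the end.

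The one place you diverge from the paper is the middle case $m=n\neq l$. You compute $\tfrac{w_l}{2}\int(1-\mu^2)\mu\,f\dd\mu$ directly and split it as $\tfrac12(F_l-w_l\int\mu^3 f\dd\mu)$, which immediately yields the form in \eqref{eq:B2-closure}. The paper instead first shows $\Vint{(\bOmega\cdot\bR_m)^2(\bOmega\cdot\bR_l)\bansatz}=\Vint{(\bOmega\cdot\bR_k)^2(\bOmega\cdot\bR_l)\bansatz}$ for $k$ the remaining index, then uses the trace identity $\sum_p\Vint{(\bOmega\cdot\bR_p)^2(\bOmega\cdot\bR_l)\bansatz}=F_l$ to solve for the common value. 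Both arguments are short and correct; yours is slightly more direct since it avoids introducing the third-direction term at all.
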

  \begin{proof}
    Consider the case when $l = m = n = 1$ at first.
    \[
      \Vint{ (\bOmega\cdot\bR_l) (\bOmega\cdot\bR_m)
      (\bOmega\cdot\bR_n) \bansatz} = 
      \dfrac1{2\pi} \int_{\bbS^2} (\bOmega\cdot\bR_1)^3  
      \sum_{i = 1}^3 w_i f(\bOmega\cdot\bR_i; 
      \gamma_i, \delta_i) \dd\bOmega.
    \]
    By Lemma \ref{lem:integral-orthogonal},
    \[
      \int_{\bbS^2} (\bOmega\cdot\bR_1)^3 w_2 f(\bOmega\cdot\bR_2;
      \gamma_2, \delta_2) \dd\bOmega = \int_{\bbS^2}
      (\bOmega\cdot\bR_1)^3 w_3 f(\bOmega\cdot\bR_3; \gamma_3, \delta_3)
      \dd\bOmega = 0,
    \]
    and
    \[
      \dfrac1{2\pi} \int_{\bbS^2} (\bOmega\cdot\bR_1)^3  
      w_1 f(\bOmega\cdot\bR_1; \gamma_1, \delta_1)
      \dd\bOmega = w_1 \int_{-1}^1
      \mu^3 f(\mu;\gamma_1,\delta_1) \dd\mu.
    \]
Note that the standard $\beta$ distribution
$\dfrac1{\Beta(\xi,\eta)} x^{\xi - 1} (1 - x)^{\eta - 1}$ has the
property \cite{johnson1970discrete}:
\[
  \int_0^1 x^3 \dfrac1{\Beta(\xi,\eta)} x^{\xi - 1}
  (1 - x)^{\eta - 1} = \dfrac{\xi (\xi + 1) (\xi + 2)}
  {(\xi + \eta)(\xi + \eta + 1)(\xi + \eta + 2)}.
\]
Therefore,
\[
  \begin{split}
    & \int_{-1}^1 \mu^3 f(\mu;\gamma_1,\delta_1)
    \dd\mu  
    \xlongequal{x = \frac12 (1 + \mu)} 
    \int_0^1 (2x - 1)^3 \dfrac1{\Beta(\xi_1, \eta_1)} 
    x^{\xi_1 - 1} (1-x)^{\eta_1 - 1} \dd x\\
    = & 8\int_0^1 x^3 \dfrac1{\Beta(\xi_1, \eta_1)}
    x^{\xi_1 - 1} (1 - x)^{\eta_1 - 1} \dd x
    - 12\int_0^1 x^2 \dfrac1{\Beta(\xi_1, \eta_1)}
    x^{\xi_1 - 1} (1 - x)^{\eta_1 - 1} \dd x\\
    & + 6\int_0^1 x \dfrac1{\Beta(\xi_1, \eta_1)}
    x^{\xi_1 - 1} (1 - x)^{\eta_1 - 1} \dd x
    - 1 \\
    = & \dfrac{8 \xi_1 (\xi_1 + 1) (\xi_1 + 2)}{(\xi_1 + \eta_1)
  (\xi_1 + \eta_1 + 1) (\xi_1 + \eta_1 + 2)} -  
  \dfrac{12 \xi_1 (\xi_1 + 1)}{(\xi_1 + \eta_1)
  (\xi_1 + \eta_1 + 1)} + \dfrac{6 \xi_1}{\xi_1 + \eta_1}
  - 1\\ 
  =  & \dfrac{(\xi_1 - \eta_1) (\xi_1^2 - 2 \xi_1 \eta_1 + 3 \xi_1
  + \eta_1^2 + 3 \eta_1 + 2)}{(\xi_1 + \eta_1) (\xi_1 + \eta_1 + 1)
  (\xi_1 + \eta_1 + 2)}.
\end{split}
  \]
  Recall \eqref{eq:beta-dist} and Lemma \ref{lem:gamma-delta}
  for the values of $\xi_1$ and $\eta_1$, we have
  \[
    \begin{split}
      & \int_{-1}^1 \mu^3 f(\mu;\gamma_1,\delta_1)
      \dd\mu \\
      = & \dfrac{(\xi_1 - \eta_1) (\xi_1^2 - 2 \xi_1 \eta_1 + 3 \xi_1
    + \eta_1^2 + 3 \eta_1 + 2)}{(\xi_1 + \eta_1) (\xi_1 + \eta_1 + 1)
    (\xi_1 + \eta_1 + 2)} \\
    = & \dfrac{ F_1 (\sigma_1^2 
  + 2 F_1^2 - 3 w_1 \sigma_1)}
  {2 F_1^2 - w_1 \sigma_1 - w_1^2}.
\end{split}
      \]
For $l = m = n = 2$ or $l = m = n = 3$ the computation is similar.

Now consider the case when $m = n$, $m\not=l$. Suppose $m = n = 1$ and $l = 2$.
Let us start by proving
\[
  \Vint{ (\bOmega\cdot\bR_1)^2 (\bOmega\cdot\bR_2) \bansatz }=  
  \Vint{ (\bOmega\cdot\bR_3)^2 (\bOmega\cdot\bR_2) \bansatz }.
\]
First, compute $\Vint{ (\bOmega\cdot\bR_1)^2 (\bOmega\cdot\bR_2) \bansatz }$.
\[
  \Vint{ (\bOmega\cdot\bR_1)^2 (\bOmega\cdot\bR_2)
  \bansatz} = 
  \dfrac1{2\pi} \int_{\bbS^2} (\bOmega\cdot\bR_1)^2
  (\bOmega\cdot\bR_2)
  \sum_{i = 1}^3 w_i f(\bOmega\cdot\bR_i; 
  \gamma_i, \delta_i) \dd\bOmega.
\]
Again by Lemma \ref{lem:integral-orthogonal},
\[
  \begin{split}
    & \int_{\bbS^2} (\bOmega\cdot\bR_1)^2
    (\bOmega\cdot\bR_2)
    w_3 f(\bOmega\cdot\bR_3; \gamma_3, \delta_3)
    \dd\bOmega \\
    = & \int_{\bbS^2} (\bOmega\cdot\bR_1)^2  
      (\bOmega\cdot\bR_2) w_1 f(\bOmega\cdot\bR_1; \gamma_1, \delta_1)
      \dd\bOmega \\
    = & 0,
\end{split}
  \]
  and
  \[
    \dfrac1{2\pi} \int_{\bbS^2} (\bOmega\cdot\bR_1)^2
    (\bOmega\cdot\bR_2)
    w_2 f(\bOmega\cdot\bR_2; \gamma_2, \delta_2)
    \dd\bOmega =  \frac 12 w_2 \int_{-1}^1
    (1 - \mu^2) \mu f(\mu;\gamma_2,\delta_2) \dd\mu.
  \]
Thus we have
\[
  \Vint{ (\bOmega\cdot\bR_1)^2 (\bOmega\cdot\bR_2)
  \bansatz} = \frac 12 w_2 \int_{-1}^1
  (1 - \mu^2) \mu f(\mu;\gamma_2,\delta_2) \dd\mu,
\]
and similarly,
\[
  \Vint{ (\bOmega\cdot\bR_3)^2 (\bOmega\cdot\bR_2)
  \bansatz} = \frac 12 w_2 \int_{-1}^1
  (1 - \mu^2) \mu f(\mu;\gamma_2,\delta_2) \dd\mu.
\]
Then we get
\[
  \Vint{ (\bOmega\cdot\bR_1)^2 (\bOmega\cdot\bR_2) \bansatz }=  
  \Vint{ (\bOmega\cdot\bR_3)^2 (\bOmega\cdot\bR_2) \bansatz }.
\]
On the other hand,
\[
  \Vint{ (\bOmega\cdot\bR_1)^2 (\bOmega\cdot\bR_2) \bansatz } +  
  \Vint{ (\bOmega\cdot\bR_3)^2 (\bOmega\cdot\bR_2) \bansatz } + 
  \Vint{ (\bOmega\cdot\bR_2)^3 \bansatz } = 
  \Vint{ (\bOmega\cdot\bR_2) \bansatz } = F_2.
\]
It follows that
\[
  \Vint{ (\bOmega\cdot\bR_1)^2 (\bOmega\cdot\bR_2) \bansatz } =   
  \Vint{ (\bOmega\cdot\bR_3)^2 (\bOmega\cdot\bR_2) \bansatz } = 
  \dfrac{ F_2 }{2}\left(1-
    \dfrac{ \sigma_2^2 
    + 2 F_2^2 - 3 w_2 \sigma_2 }
    { 2 F_2^2 - w_2 \sigma_2 - w_2^2 }
  \right).
\]

Now we look at $\Vint{ (\bOmega\cdot\bR_1) (\bOmega\cdot\bR_2)
(\bOmega\cdot\bR_3) \bansatz}$.
Once more by Lemma \ref{lem:integral-orthogonal},
\[
  \begin{split}
    & \int_{\bbS^2} (\bOmega\cdot\bR_1) (\bOmega\cdot\bR_2) 
    (\bOmega\cdot\bR_3) w_1 f(\bOmega\cdot\bR_1; 
    \gamma_1, \delta_1) \dd\bOmega \\
    = & \int_{\bbS^2} (\bOmega\cdot\bR_1) (\bOmega\cdot\bR_2) 
    (\bOmega\cdot\bR_3) w_2 f(\bOmega\cdot\bR_2; 
    \gamma_2, \delta_2) \dd\bOmega  \\
    = & \int_{\bbS^2} (\bOmega\cdot\bR_1) (\bOmega\cdot\bR_2) 
    (\bOmega\cdot\bR_3) w_3 f(\bOmega\cdot\bR_3; 
    \gamma_3, \delta_3) \dd\bOmega  \\
    = & ~0.
  \end{split}
\]
Therefore,
\[
  \Vint{ (\bOmega\cdot\bR_1) (\bOmega\cdot\bR_2)
  (\bOmega\cdot\bR_3) \bansatz} = 0.
\]
Summarizing the results from the above three cases completes the proof of this 
lemma.
\end{proof}

\tikzstyle{midpoint}=[circle,draw=black!50,fill=black,inner sep=1pt]
\begin{figure}[htbp]
  \centering
  \begin{tikzpicture}[font = \sansmath]
  \draw (0,0) coordinate (a)--+(3,0) coordinate (b)--+(60:3) coordinate (c) --cycle;
  \foreach\x[remember=\x as \lastx (initially c)] in{a,b,c}{
    \node[fill,circle,inner sep=1pt] at ($(\x)$) {};
  }
\node (pointP) at ( 1.2,1) [midpoint] {};
\node [black,above] at (pointP.east) {$P$};
\coordinate (Q) at (intersection of c--pointP and a--b);
\node (pointP1) at (Q) [midpoint] {};
\node [black,below] at (pointP1.east) {$P_1$};
\draw[dashed] (c) -- (Q);
\coordinate (U) at (intersection of a--pointP and b--c);
\node (pointP2) at (U) [midpoint] {};
\node [black,right] at (pointP2.east) {$P_2$};
\draw[dashed] (a) -- (U);
\coordinate (W) at (intersection of b--pointP and a--c);
\node (pointP3) at (W) [midpoint] {};
\node [black,left] at (pointP3.west) {$P_3$};
\draw[dashed] (b) -- (W);
\end{tikzpicture}
\caption{Schematic diagram of the interpolation}
\label{fig:schematic-interpolation}
\end{figure}
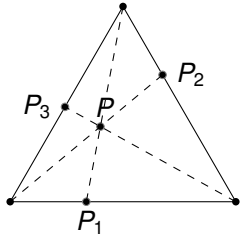

It remains to give $\sigma_i$, $i = 1,2,3$. Note that the trace of the
matrix $\bbrE^2$ equals $E^0$, so $\lambda_i$ satisfy the constraint
\[
\lambda_1 + \lambda_2 + \lambda_3 = E^0.
\]
And due to the positive semi-definiteness of $\bbrE^2$, we have
$\lambda_i\geq0$, $i=1,2,3$. This allows us to regard
$\left( \dfrac{\lambda_1}{E^0}, \dfrac{\lambda_2}{E^0},
  \dfrac{\lambda_3}{E^0} \right)$
as the barycentric coordinates of a point $\bP$ within a triangle (see
\figref{schematic-interpolation}). At the vertices of this triangle,
only one of the three eigenvalues of $\bbrE^2$ is non-zero. By the
similar arguments in the proof of Lemma \ref{lem:lambda2sigma}, a
non-negative $\bansatz$ in such cases retains only one of its three
terms. Combining this fact with \eqref{eq:w-cond} gives us the
closure at the vertices of the triangle:
\begin{center}
\begin{tabular}{ccc}
  $\left( \dfrac{\lambda_1}{E^0}, \dfrac{\lambda_2}{E^0},
  \dfrac{\lambda_3}{E^0} \right)$ & $\mapsto$ & 
  $(\sigma_1, \sigma_2, \sigma_3)$ \\
  $(1, 0, 0)$ & & $(E_0, 0, 0)$ \\
  $(0, 1, 0)$ & & $(0, E_0, 0)$ \\
  $(0, 0, 1)$ & & $(0, 0, E_0)$ \\
\end{tabular}
\end{center}
Now that the value of $(\sigma_1, \sigma_2, \sigma_3)$ at the vertices
are specified by the closure relation, we are to propose a smooth
extension of the functions $\sigma_i$ at the vertices to the whole
triangle, then a smooth extension of the closure relation is
achieved. A natural extension is a scaled identity map as
\[
\left( \dfrac{\lambda_1}{E^0}, \dfrac{\lambda_2}{E^0},
  \dfrac{\lambda_3}{E^0} \right) \mapsto \left( \lambda_1, \lambda_2,
  \lambda_3 \right),
\]
However, by \eqref{eq:w-cond} this extension results in
$w_j = \sigma_j$. As a consequence, the ansatz would always be linear
combinations of Dirac functions. It cannot include any smooth
functions, particularly it cannot recover a constant distribution at
the equilibrium. Moreover, such an extension does not depend on the
first-order moments $F_i$ at all, which is definitely not
appropriate. This motivates us to seek other ways of extending.

To figure out an appropriate extension, we assume it takes the following  
general but decomposed form
\begin{equation}\label{eq:sigma-decompose}
\sigma_i = \sum_{j=1}^3 s_j \sigma_i^j, \quad i = 1,2,3.
\end{equation}
It is assumed that $s_j$ is a weight function that relies only on $\lambda_j$, 
and $\sigma_i^j$ is a function that depends on both the first-order moments and the eigenvalues 
of the second-order moments but that is independent of $\lambda_j$. 

First, we determine the values of the weights, $s_j$. Our approach is 
motivated by geometric considerations. It is illustrated in
\figref{schematic-interpolation}. For the point $\bP$, we connect each 
vertex to $\bP$ and extend the line segment until it intersects with the
opposite side. Those three intersection points are denoted $\bP_j$,
$j = 1,2,3$, where the index $j$ indicates that $\bP_j$ lies on the side where
$\lambda_j = 0$. Denote the barycentric coordinates of $\bP_j$ by 
$\bP_j = \left( \dfrac{\lambda^j_1}{E^0}, \dfrac{\lambda^j_2}{E^0},
\dfrac{\lambda^j_3}{E^0} \right)$.
Therefore,
\[
  \lambda_i = \sum\limits_{j=1}^3 s_j \lambda^j_i,\quad
  j = 1,2,3,
\]
where
\begin{equation}\label{eq:weight-split} 
  s_1 = \frac12(\lambda_2+\lambda_3),\quad
  s_2 = \frac12(\lambda_1+\lambda_3),\quad
  s_3 = \frac12(\lambda_1+\lambda_2).
\end{equation}
The functions in \eqref{eq:weight-split} are used as the weights $s_j$, 
$j=1,2,3$.

The next thing is to specify $\sigma^j_i$.
Consider a $3 \times 3$ matrix with the nine functions,
$\sigma^j_i \left(\dfrac{\lambda_1}{E^0},
  \dfrac{\lambda_2}{E^0},\dfrac{\lambda_3}{E^0};
  \dfrac{F_1}{E^0},\dfrac{F_2}{E^0}, \dfrac{F_3}{E^0}\right)$,
$i, j = 1,2,3$, as its elements. Naturally, one would expect $\sigma^j_i$
to have symmetry in the permutation of indices. Precisely, if $\tau$ is a permutation on
the index set $\{1,2,3\}$, then for $\forall i, j = 1,2,3$,
\[
\sigma^j_i \left(\dfrac{\lambda_1}{E^0},
  \dfrac{\lambda_2}{E^0},\dfrac{\lambda_3}{E^0};
  \dfrac{F_1}{E^0},\dfrac{F_2}{E^0}, \dfrac{F_3}{E^0}\right) =
\sigma^{\tau(j)}_{\tau(i)} \left(\dfrac{\lambda_{\tau(1)}}{E^0},
  \dfrac{\lambda_{\tau(2)}}{E^0},\dfrac{\lambda_{\tau(3)}}{E^0};
  \dfrac{F_{\tau(1)}}{E^0},\dfrac{F_{\tau(2)}}{E^0},
  \dfrac{F_{\tau(3)}}{E^0}\right).
\]
Thus, we have only two functions for all $\sigma^j_i$:
\begin{itemize}
\item The three diagonal entries, $\sigma^i_i$, $i = 1,2,3$, have the
  same form;
\item All six off-diagonal entries, $\sigma^j_i$, $i \neq j$, have
  the same form.
\end{itemize}
Since $\sigma_i^j$ is assumed to be independent of $\lambda_j$, it
should be constant on the line segment ${\bP \bP_j}$. As an example, 
since $\sigma_i^1$ does not depend on $\lambda_1$, it should
be independent of $\lambda_2 + \lambda_3$. Therefore, one may use 
$\dfrac{\lambda_2} {\lambda_2 + \lambda_3}$ and
$\dfrac{\lambda_3}{\lambda_2 + \lambda_3}$ to replace $\lambda_2$ and $\lambda_3$
as variables in $\sigma_i^1$. Noticing that
$\left(0, \dfrac{\lambda_2} {\lambda_2 + \lambda_3},
\dfrac{\lambda_3}{\lambda_2 + \lambda_3}\right)$
is the barycentric
coordinate of $\bP_1$, we thus have
$\left. \sigma_i^1 \right|_{\bP} = \left. \sigma_i^1 \right|_{\bP_1}$,
and it is constant on line ${\bP \bP_1}$.

Moreover, this makes us assume $\sigma_i^j$ is also independent of $F_j$.
The reason is as follows. By Lemma \ref{lem:lambda2sigma}, the only region
in which \eqref{eq:B2-ansatz} might have a non-negative distribution when
$\lambda_j = 0$ is the rectangle $|F_k| \leq \lambda_k$, $k \neq j$.
Therefore, even when all three $\lambda_j$, $j = 1,2,3$, are positive,
we restrict our expected region to have a non-negative distribution 
inside the box $|F_k| \leq \lambda_k$, $k = 1,2,3$. Note that
this domain of $F_j$ depend on $\lambda_j$ while
$\sigma_i^j$ does not rely on $\lambda_j$, so we are induced to let 
$\sigma_i^j$ to be independent of $F_j$.

We proceed to specify $\sigma^j_i$ by constraints at vertices and
sides of the triangle. We first investigate the vertices to conclude
that
\begin{lemma}
With the assumptions above on $\sigma_i^j$, we have
\[
\sigma_i^i \equiv 0, \quad i = 1,2,3.
\]
\end{lemma}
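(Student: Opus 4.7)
The plan is to derive $\sigma_i^i \equiv 0$ by evaluating the decomposition $\sigma_i = \sum_{m=1}^{3} s_m \sigma_i^m$ at the three vertices of the triangle and then extending the resulting identities to the whole triangle using the permutation symmetry and the independence properties already imposed on $\sigma_i^j$.

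First, I would evaluate the weights at each vertex. At $V_j$ the formulas~\eqref{eq:weight-split} give $s_j|_{V_j} = 0$ and $s_k|_{V_j} = E_0/2$ for $k \neq j$, and Lemma~\ref{lem:lambda2sigma} forces $F_k = F_l = 0$ for $\{j,k,l\} = \{1,2,3\}$. Plugging the closure-prescribed values $(\sigma_1,\sigma_2,\sigma_3)|_{V_j}$ read off from the vertex table into the decomposition then yields, for every $i$,
\[
  \sigma_i^k|_{V_j} + \sigma_i^l|_{V_j} = 2\delta_{ij}.
\]

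Next, since the state at $V_j$ is fixed by the transposition $\tau = (k\,l)$, I would apply the index-permutation symmetry to the system above, relating the two terms on the left-hand side to one another and to the analogous systems at $V_k$ and $V_l$. The critical consequence is that, for $i \neq j$, the diagonal contribution $\sigma_i^i|_{V_j}$ is uniquely determined by the vertex closures and must vanish; the off-diagonal contributions $\sigma_i^k|_{V_j}$ and $\sigma_i^l|_{V_j}$ absorb the rest.

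Finally, to extend $\sigma_i^i|_{V_j} = 0$ from a single vertex to the whole triangle, I would invoke two facts: (a) $\sigma_i^i$ is constant along every line $V_i \bP$ through the interior (by its independence from $\lambda_i$ and $F_i$); and (b) the $(jl)$-swap symmetry inherited by $\sigma_i^i$ on the edge $\lambda_i = 0$. Together with the shared functional form across the three diagonals, these force $\sigma_i^i \equiv 0$ on the entire triangle for $i=1,2,3$.

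The main obstacle is the second step: the permutation group $S_3$ acts nontrivially on both the index pair $(i,j)$ of $\sigma_i^j$ and on the six-dimensional argument list, and one has to combine several transpositions and cycles of $S_3$ to cleanly isolate the diagonal contribution from the off-diagonal ones at each vertex. Once this decoupling has been carried out, the extension in the third step is immediate from the structural constraints on $\sigma_i^i$.
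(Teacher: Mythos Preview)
Your plan has two gaps, one minor and one substantive.

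\textbf{The vertex step is not a deduction but an imposition.} At $V_j$ the decomposition gives only the \emph{sum} $\sigma_i^k+\sigma_i^l=0$ for $i\neq j$. The transposition $(k\,l)$ that fixes $V_j$ forces $\sigma_k^k=\sigma_l^l$ and $\sigma_k^l=\sigma_l^k$, but it does not separate the diagonal from the off--diagonal contribution: one is still free to set $\sigma_i^i|_{V_j}=a$ and $\sigma_i^l|_{V_j}=-a$ for any $a$. Cross--vertex permutations merely propagate the same undetermined $a$ around. The paper does not claim uniqueness here; it explicitly ``imposes'' $\sigma_2^2=\sigma_2^3=0$ at the vertex as part of the model construction. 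Your phrase ``uniquely determined by the vertex closures and must vanish'' overstates what the symmetry yields.

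\textbf{The extension step misses the essential edge argument.} Even granting $\sigma_i^i|_{V_j}=0$ for all $j\neq i$, you only know $\sigma_i^i$ at the two \emph{endpoints} of the edge $\lambda_i=0$. Constancy along rays through $V_i$ and the $(j\,l)$--swap symmetry on that edge do not determine $\sigma_i^i$ in its interior (e.g.\ $\sigma_1^1=c\,\lambda_2\lambda_3$ on $\lambda_1=0$ is compatible with all of your constraints). The paper closes this gap by bringing in the physical constraint $\sigma_1=0$ on the edge $\lambda_1=0$ from Lemma~\ref{lem:lambda2sigma}, then expanding the decomposition at an arbitrary point $P$ of that edge. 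Because $\sigma_1^2$ and $\sigma_1^3$ are constant along rays through $V_2$ and $V_3$ respectively, their values at $P$ reduce to their (already imposed) vertex values, which vanish; the identity $0=\tfrac12\sigma_1^1+\tfrac12\lambda_3\,\sigma_1^2|_{V_3}+\tfrac12\lambda_2\,\sigma_1^3|_{V_2}$ then forces $\sigma_1^1=0$ on the whole edge, and \emph{only then} does the ray argument extend it to the triangle. Your proposal never invokes $\sigma_i=0$ on $\lambda_i=0$, so the extension as written cannot succeed.
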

\begin{proof}
  First, take the vertex in which $\lambda_1 = 1$ and
  $\lambda_2 = \lambda_3 = 0$. On this vertex one needs $\sigma_1 = 1$,
  and $\sigma_2 = \sigma_3 = 0$. We have
  \[
  \left.\sigma_1\right|_{\lambda_1 = 1} = \frac12(\left.\sigma^2_1\right|_{\lambda_1 = 1} 
  + \left.\sigma^3_1\right|_{\lambda_1 = 1}).
  \]
  Due to symmetry we know $\sigma^2_1 = \sigma^3_1$ on this vertex.
  Therefore, we have to let $\left.\sigma^2_1\right|_{\lambda_1 = 1}
  = \left.\sigma^3_1\right|_{\lambda_1 = 1} = 1$. Meanwhile, 
  \[
  \left.\sigma_2\right|_{\lambda_1 = 1} = \frac12(
  \left.\sigma^2_2\right|_{\lambda_1 = 1} + 
  \left.\sigma^3_2\right|_{\lambda_1 = 1}) = 0.
  \]
  This induces us to impose $\sigma^2_2 = \sigma^3_2 = 0$ on this vertex. 
  Next, consider the case on the side where $\lambda_1 = 0$. By Lemma
  \ref{lem:lambda2sigma}, $\sigma_1 = 0$. Recalling the consistency
  constraints \eqref{eq:w-cond}, we have
  \begin{equation}\label{eq:constraint-sigma23}
    \sigma_2 - \sigma_3 = \lambda_2 - \lambda_3.
  \end{equation}
  Consider any point $\bP$ on the side $\lambda_1 = 0$.
  Then, in \eqref{eq:sigma-decompose}, the function
  $\sigma^1_1$ takes its value at $\bP$ itself, while
  $\sigma^2_1$ is evaluated at the vertex $\lambda_3 = 1$, and 
  $\sigma^3_1$ is evaluated at the vertex $\lambda_2 = 1$.
  Then, on this side, we have
  \[
  \begin{split}
    \sigma_1 & = \frac12(\lambda_2 + \lambda_3) \sigma^1_1
    + \frac12(\lambda_1 + \lambda_3) \left.\sigma^2_1\right|_{\lambda_3 = 1}
    + \frac12(\lambda_1 + \lambda_2) \left.\sigma^3_1\right|_{\lambda_2 = 1} \\
    & = \frac12\sigma^1_1 + \frac12 \lambda_3 
    \left.\sigma^2_1\right|_{\lambda_3 = 1} + 
    \frac12 \lambda_2 \left.\sigma^3_1\right|_{\lambda_2 = 1} \\
    & = \frac12\sigma^1_1 = 0.
  \end{split}
  \]
  This proves that $\sigma^1_1 = 0$ on this side. 

  The above discussions show that $\sigma^i_i$ vanishes both at the vertex with 
  $\lambda_i = 1$ and on the side with $\lambda_i = 0$. Also, recall that $\sigma^i_i$ 
  is constant along straight lines passing through the vertex $\lambda_1 = 1$.
  Hence it is zero on the whole triangle. By symmetry, we have $\sigma_i^i
  \equiv 0$, $i=1,2,3$, on the whole triangle.

\end{proof}

We now turn to specifying $\sigma^j_i$ on the sides. 
On the side where $\lambda_1 = 0$, we also have
\[
  \begin{split}
    \sigma_2 = & \frac12(\lambda_2 + \lambda_3) \sigma^1_2 
    + \frac12(\lambda_1 + \lambda_3) 
    \left.\sigma^2_2\right|_{\lambda_3 = 1}
    + \frac12(\lambda_1 + \lambda_2) \left.\sigma^3_2\right|_{\lambda_2 = 1} \\
    = & \frac12 \sigma^1_2 + \frac12 \lambda_2 \left.\sigma^3_2\right|_{\lambda_2 = 1} 
    \qquad
  \text{notice } \sigma^3_2 = 1 \text{ on this vertex}\\
  = & \frac12 \sigma^1_2 + \frac12 \lambda_2,
\end{split}
\]
and
\[
  \sigma_3 = \frac12 \sigma^1_3 + \frac12 \lambda_3.
\]
Substracting these two equations yields
\[
  \sigma_2 - \sigma_3 = \frac12 (\sigma^1_2 - \sigma^1_3) + 
  \frac12 (\lambda_2 - \lambda_3).
\]
By \eqref{eq:constraint-sigma23}, we have
\[
\sigma^1_2 - \lambda_2 = \sigma^1_3 - \lambda_3.
\]
Recalling our previous assumption that $\sigma^1_2$ and $\sigma^1_3$ 
are independent of $\lambda_1$ and $F_1$, one has to set
\begin{equation}\label{eq:sigma-on-bound}
  \begin{split}
    & \sigma_2^1 = \lambda_2
    + h\left(\lambda_2, \lambda_3;
    F_2, F_3 \right),\\
    & \sigma_3^1 = \lambda_3
    + h\left(\lambda_2, \lambda_3;
    F_2, F_3\right),
  \end{split}
\end{equation}
where $h$ is a function with symmetry
\[
h(x,y;F_x,F_y) = h(y,x;F_y,F_x).
\]

The only thing remaining is to specify a particular
function $h$, so that all $\sigma_i^j$, $i \neq j$, would be assigned. In
choosing the function $h$, we have some constraints. For example:
\begin{enumerate}
\item On all three vertices, the values of $\sigma^j_k$ given by
  \eqref{eq:sigma-on-bound} are consistent with the discussions
  above.
\item The ansatz should cover the equilibrium distribution at the
  barycenter of the triangle.
\end{enumerate}
With these constraints, our objective is to find an $h$ for which the region
where $\bansatz$ is a non-negative integrable function is as large as 
possible. The requirements for $h$ can be summarized in the
following lemma:
\begin{lemma}\label{lem:requirements-h}
  Consider the case when $\lambda_1 = 0$.  For consistency with
  previous constraints on the vertices, the need to contain
  equilibrium, and to generate a non-negative ansatz for all moments
  within the region specified by Lemma \ref{lem:lambda2sigma}, $h$
  should satisfy the following:
  \begin{enumerate}
  \item \label{item:sign-h} $h(\lambda_2,\lambda_3; F_2, F_3) \leq 0$,
    within the rectangle $|F_j| \leq \lambda_j$, $j = 2,3$.
  \item \label{item:value-h} $-\frac12 h(\lambda_2,\lambda_3; F_2,F_3) \leq 
    \min\left\{\lambda_2 - |F_2|, \lambda_3 - |F_3|\right\}$.
  \item\label{item:vertex} $h(0,1; 0,F_y) = 0$, $h(1,0; F_x,0) = 0$.
  \item \label{item:equilibrium} $h(\frac12,\frac12,0,0) = -\frac13$.
  \item \label{item:boundary} $h(x, y; \pm x,\pm y) = 0$.
  \end{enumerate}
\end{lemma}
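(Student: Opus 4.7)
The plan is to reduce all five conditions to constraints on $h$ by first deriving closed-form expressions for $\sigma_2$ and $\sigma_3$ on the side $\lambda_1 = 0$. Substituting the decomposition \eqref{eq:sigma-decompose} with the weights \eqref{eq:weight-split}, using $\sigma_i^i \equiv 0$, the defining relations $\sigma_2^1 = \lambda_2 + h$ and $\sigma_3^1 = \lambda_3 + h$, together with the vertex values $\sigma_2^3|_{(0,1,0)} = \sigma_3^2|_{(0,0,1)} = E^0$ (which I would first establish by using the permutation symmetry swapping the free indices at each vertex to pair $\sigma_2^1$ with $\sigma_2^3$ in the known closure data), I expect the clean relations
\[
\sigma_2 = \lambda_2 + \tfrac12 h, \qquad \sigma_3 = \lambda_3 + \tfrac12 h \qquad \text{on the side } \lambda_1 = 0.
\]

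With these in hand, items \ref{item:sign-h} and \ref{item:value-h} follow immediately by substituting into the two-sided bound $|F_j| \leq \sigma_j \leq \lambda_j$ of Lemma \ref{lem:lambda2sigma}: the upper inequality gives $h \leq 0$, and the lower inequality gives $-\tfrac12 h \leq \lambda_j - |F_j|$ for $j = 2,3$. Item \ref{item:vertex} is an immediate specialization to the endpoints of the side: at $(\lambda_2,\lambda_3) = (1,0)$ the closure table demands $\sigma_2 = E^0$, forcing $h = 0$, while the forced vanishing $F_3 = 0$ (from $\lambda_3 = 0$) accounts for the zero slot in $h(1,0;F_x,0)$; the other vertex is symmetric. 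Item \ref{item:boundary} follows from items \ref{item:sign-h} and \ref{item:value-h}: when $(F_2,F_3) = (\pm x, \pm y)$ one has $\min\{\lambda_2 - |F_2|, \lambda_3 - |F_3|\} = 0$, so $-\tfrac12 h \leq 0$; combined with $h \leq 0$, this yields $h = 0$.

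The main obstacle is item \ref{item:equilibrium}. Normalize $E^0 = 1$ and require the ansatz \eqref{eq:B2-ansatz} to reproduce the isotropic distribution $\bansatz \equiv 1/(4\pi)$ at the barycenter; by symmetry this forces $w_i = 1/3$ and the axisymmetric profile $f_i \equiv 1/2$, so $\sigma_i = w_i \int_{-1}^1 \mu^2 f_i\,\mathrm{d}\mu = 1/9$. At the barycenter the weights collapse to $s_j = 1/3$, and the permutation swapping indices $1$ and $3$ fixes the point and preserves $F_i = 0$, so the postulated symmetry of $\sigma_i^j$ gives $\sigma_2^1 = \sigma_2^3$. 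The equation $\sigma_2 = s_1 \sigma_2^1 + s_3 \sigma_2^3 = \tfrac{2}{3}\sigma_2^1 = 1/9$ then yields $\sigma_2^1 = 1/6$. Since $\sigma_2^1$ is independent of $\lambda_1$, it must be evaluated at the projection $\bP_1 = (0,1/2,1/2)$ of the barycenter onto the side $\lambda_1 = 0$; the formula $\sigma_2^1 = \lambda_2 + h$ there gives $1/2 + h(1/2,1/2;0,0) = 1/6$, hence $h(1/2,1/2;0,0) = -1/3$. The delicate point in this last step is tracking which projection point defines the evaluation of $\sigma_i^j$ and pairing the right indices via the permutation symmetry; everything else collapses cleanly to Lemma \ref{lem:lambda2sigma} and the vertex data once the closed form for $\sigma_2, \sigma_3$ on the side is available.
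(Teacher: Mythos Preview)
Your approach is essentially the paper's: both reduce items \ref{item:sign-h}, \ref{item:value-h}, and \ref{item:boundary} to the relation $\sigma_j=\lambda_j+\tfrac12 h$ on the side $\lambda_1=0$ combined with the two-sided bound $|F_j|\le\sigma_j\le\lambda_j$ from Lemma~\ref{lem:lambda2sigma}, and item \ref{item:vertex} to the vertex closure data. For item \ref{item:equilibrium} the paper merely says ``direct calculation,'' and your detailed barycenter computation (evaluating $\sigma_2^1$ at the projection $\bP_1=(0,\tfrac12,\tfrac12)$ via the decomposition and permutation symmetry) is exactly what that phrase hides.

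One caveat worth flagging: your assertion that isotropy of $\bansatz$ at the barycenter \emph{forces} $f_i\equiv\tfrac12$ is not quite right. Taking $\xi_i=\eta_i=2$ gives $f_i(\mu)=\tfrac34(1-\mu^2)$, and since $\sum_i(1-(\bOmega\cdot\bR_i)^2)=2$, this also yields $\bansatz\equiv E^0/(4\pi)$, but with $\sigma_i=w_i/5=1/15$ and hence $h(\tfrac12,\tfrac12;0,0)=-\tfrac25$. So the value $-\tfrac13$ corresponds to the natural choice where each axisymmetric term is itself constant, rather than being a strict consequence of isotropy alone. The paper's terse treatment glosses over the same point, so this is a shared imprecision rather than a divergence in method.
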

\begin{proof}
  Items \ref{item:sign-h} and \ref{item:value-h} come from requiring
  $\bansatz$ to be a non-negative distribution for the rectangle
  region in Lemma \ref{lem:lambda2sigma}.  Recalling that on the side
  $\lambda_1 = 0$, we have
  \begin{equation}\label{eq:sigma23}
    \sigma_j = \lambda_j + \frac12 h(\lambda_2,\lambda_3; F_2,F_3),
    \quad j = 2,3.
  \end{equation}
  From Lemma \ref{lem:lambda2sigma}, a non-negative distribution for
  \eqref{eq:B2-ansatz} in such cases require
  $|F_j| \leq \sigma_j \leq \lambda_j$, $j = 2,3$.  Hence
  \[
    h(\lambda_2,\lambda_3; F_2,F_3) \leq 0,
  \]
  and
  \[
    -\frac12 h(\lambda_2,\lambda_3; F_2,F_3) \leq 
    \min\left\{\lambda_2 - |F_2|, \lambda_3 - |F_3|\right\}.
  \]

  Item \ref{item:vertex} is due to consistency on vertices.  For
  instance, consider the case when $\lambda_2 = 1$, which should
  correspond to $\left.\sigma_2\right|_{\lambda_2 = 1} = 1$,
  $\left.\sigma_3\right|_{\lambda_2 = 1} = 0$. Plugging these into
  \eqref{eq:sigma23} gives item \ref{item:vertex}.

  Item \ref{item:equilibrium} comes from recovering equilibrium.  At
  equilibrium, $\lambda_j = \dfrac13$, $F_j = 0$, $j = 1,2,3$.  Direct
  calculation gives item \ref{item:equilibrium}.

  Item \ref{item:boundary} also derives from the non-negativity of
  the ansatz. It is a direct consequence of the discussions in Lemma
  \ref{lem:lambda2sigma}. In fact, it will naturally be satisfied if
  both requirements \ref{item:sign-h} and \ref{item:value-h} are
  satisfied. However, unlike either, it poses a direct constraint on the
  value of $h$ at certain points, which, therefore, is particularly
  useful when trying to propose a formula for $h$.

\end{proof}

In seeking $h(x,y; F_x,F_y)$, we start with item \ref{item:boundary}
in Lemma \ref{lem:requirements-h}, which suggests that $h(x,y; F_x,F_y)$
contains the factor
\begin{equation}\label{eq:h-factor}
  q(x,y; F_x,F_y) = \left(x - \dfrac{F_x^2}{x}\right) 
  \left(y - \dfrac{F_y^2}{y}\right).
\end{equation}
Note that as discussed in Lemma \ref{lem:lambda2sigma},
$\lambda_2 = 0$ would induce $F_2 = 0$, so this construction also
guarantees item \ref{item:vertex}. Also,
$q(\lambda_2,\lambda_3; F_2,F_3) \geq 0$ within the rectangle
$|F_j| \leq \lambda_j$, $j = 2,3$. Therefore, the remaining factor,
$ h(x,y; F_x, F_y) / q(x,y; F_x, F_y)$ is always
non-positive within $|F_j| \leq \lambda_j$, $j = 2,3$. We choose this
factor as a constant scaling of
\[
r(x,y;F_x,F_y) = -\left(1-\dfrac{F_x^2}{x} - \dfrac{F_y^2}{y}\right),
\]
which is always non-positive within the realizability domain.  The
constant factor is then given as $4/3$ based on item
\ref{item:equilibrium} in Lemma \ref{lem:requirements-h}. Therefore,
the function $h$ is set as
\begin{equation}\label{eq:h-formula}
  h(x,y;F_x,F_y) = \frac{4}{3} q(x,y;F_x,F_y) r(x,y;F_x,F_y).
\end{equation}
It is clear that it satisfies all items in Lemma
\ref{lem:requirements-h} except for item \ref{item:value-h}. The
precise depiction of the extent to which item \ref{item:value-h} is fulfilled
is deferred to the investigation of realizability in the next section.

With $h$ given, the whole model is closed. Direct
calculation gives us the closing relation of $\sigma_i$, $i = 1,2,3$, as
below:
\begin{equation}\label{eq:sigma-formula}
  \begin{split}
    & \sigma_1 = \lambda_1 - g(\lambda_1,\lambda_2; F_1, F_2)
    - g(\lambda_1,\lambda_3; F_1, F_3), \\
    & \sigma_2 = \lambda_2 - g(\lambda_2, \lambda_1; F_2, F_1)
    - g(\lambda_2,\lambda_3, F_2, F_3), \\
    & \sigma_3 = \lambda_3 - g(\lambda_3, \lambda_1; F_3, F_1)
    - g(\lambda_3, \lambda_2; F_3, F_2),
  \end{split}
\end{equation}
where
\[
g(x,y; F_x, F_y) = \dfrac{2 q(x,y;F_x,F_y) (x + y - 1 -
  r(x,y;F_x,F_y))} {3 (x + y)^2},
\]
satisfying $g(x,y; F_x,F_y) = g(y,x; F_y,F_x)$.

With $\sigma_j$ given as above, we substitute it into
\eqref{eq:w-cond} to give $w_i$, $i=1,2,3$, as
\begin{equation}\label{eq:w-formula}
  \begin{split}
    & w_1 = \sigma_1 + 2 g(\lambda_2, \lambda_3; F_2, F_3), \\
    & w_2 = \sigma_2 + 2 g(\lambda_1, \lambda_3; F_1, F_3), \\
    & w_3 = \sigma_3 + 2 g(\lambda_1, \lambda_2; F_1, F_2).
  \end{split}
\end{equation}
Then we plug $w_i$ and $\sigma_i$ into \eqref{eq:gamma-cond} to get
$\gamma_i$ and $\delta_i$. With formula for $w_i$, $\gamma_i$ and
$\delta_i$, $i = 1,2,3$, we now have the complete closed formula for
the ansatz $\bansatz$ in \eqref{eq:B2-ansatz}.

This closes our 3D $B_2$ model.  

%%% Local Variables: 
%%% mode: latex
%%% TeX-master: "AppM2_MultiD.tex"
%%% End: 

%!TEX root = AppM2_MultiD.tex

\section{Model Properties}\label{sec:properties}

In this section, we will study the rotational invariance, realizability,
and hyperbolicity of the 3D $B_2$ model proposed.

The proof of rotational invariance is almost straightforward for our
model. This is because all the parameters $\bR_i$, $w_i$,
$\gamma_i$, and $\delta_i$ in the ansatz $\bansatz$ are given as
functions of known moments $E^0$, $\bE^1$, and $\bbrE^2$. Consequently,
the ansatz is rotationally invariant, so we conclude that the
moment system produced by $\bansatz$ has rotational
invariance. More precisely, we have 
\begin{theorem}\label{lem:rotational_invariant}
  The 3D $B_2$ model \eqref{eq:moment_model} is rotationally invariant.
\end{theorem}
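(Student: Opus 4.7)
The plan is to exploit the fact that every parameter entering the ansatz $\bansatz$ in \eqref{eq:B2-ansatz} has been defined intrinsically from tensor invariants and equivariants of the moments $(E^0,\bE^1,\bbrE^2)$. The ansatz therefore transforms as a scalar field on $\bbS^2$ under rotations of the ambient space, and the tensor transformation law of the moment vector then yields invariance of the fluxes.

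First, for any $\mathbb{O}\in SO(3)$ I would record that the moments transform as $E^0\mapsto E^0$, $\bE^1\mapsto\mathbb{O}\bE^1$ and $\bbrE^2\mapsto\mathbb{O}\bbrE^2\mathbb{O}^T$; this defines a $9\times 9$ linear isomorphism $\mathbb{T}=\mathbb{T}(\mathbb{O})$ on the moment vector $\bE$, and $\mathbb{T}$ is exactly the matrix that rewrites the test monomials via $\bv(\mathbb{O}\bOmega)=\mathbb{T}(\mathbb{O})\bv(\bOmega)$. By Lemma \ref{lem:Rj} the axes $\bR_j$ are eigenvectors of $\bbrE^2$, hence they transform as $\bR_j\mapsto\mathbb{O}\bR_j$, while the eigenvalues $\lambda_j$ and the projections $F_j=\bE^1\cdot\bR_j$ are invariant scalars. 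Since $\sigma_j,w_j,\gamma_j,\delta_j$ are determined purely by these invariants through \eqref{eq:sigma-formula}, \eqref{eq:w-formula} and \eqref{eq:gamma-cond}, substituting into \eqref{eq:B2-ansatz} yields the covariance identity
\[
\bansatz[\mathbb{T}\bE](\bOmega)\;=\;\bansatz[\bE](\mathbb{O}^T\bOmega).
\]

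Finally, I would translate this covariance into the flux identity required by the definition in Section~\ref{sec:preliminary}. Given any unit vector $\bn$, pick $\mathbb{O}$ with $\mathbb{O}\be_x=\bn$ and write
\[
\sum_{i=1}^3 n_i\bff_i(\bE)=\Vint{(\bOmega\cdot\bn)\,\bv(\bOmega)\,\bansatz[\bE](\bOmega)}.
\]
The substitution $\bOmega=\mathbb{O}\tilde\bOmega$ leaves the surface measure invariant and converts $\bOmega\cdot\bn$ into $\tilde\bOmega\cdot\be_x$; replacing $\bv(\mathbb{O}\tilde\bOmega)$ by $\mathbb{T}(\mathbb{O})\bv(\tilde\bOmega)$ and $\bansatz[\bE](\mathbb{O}\tilde\bOmega)$ by $\bansatz[\mathbb{T}(\mathbb{O}^T)\bE](\tilde\bOmega)$ via the covariance identity converts the integral to $\mathbb{T}(\mathbb{O})\bff_x(\mathbb{T}(\mathbb{O}^T)\bE)$. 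Using the group-homomorphism property $\mathbb{T}(\mathbb{O}^T)=\mathbb{T}(\mathbb{O})^{-1}$, this is exactly $\mathbb{T}^{-1}\bff_x(\mathbb{T}\bE)$ with $\mathbb{T}:=\mathbb{T}(\mathbb{O}^T)$, which is the rotational invariance relation.

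The main subtlety I anticipate is the non-uniqueness of the eigenframe: sign flips of individual $\bR_j$, permutations of the three axes, and arbitrary orthonormal choices within degenerate eigenspaces of $\bbrE^2$ must all produce the same ansatz, or else the map $\bE\mapsto\bansatz[\bE]$ is ill-defined and the argument above collapses. Because the closure formulas \eqref{eq:sigma-formula}--\eqref{eq:w-formula} are fully symmetric in the three indices and $h$ is even in each $F$-argument through the factor $q$ in \eqref{eq:h-factor}, sign and permutation ambiguities do cancel; the degenerate-spectrum case can then be dispatched by continuity of $\sigma_j,w_j$ in the moments, passing to the limit from the generic case in which the eigenframe is unique up to signs. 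Verifying this well-definedness carefully is the only non-routine step of the proof.
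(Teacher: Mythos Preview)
Your proof is correct and follows essentially the same approach as the paper's: both establish that the ansatz parameters $w_i,\gamma_i,\delta_i$ depend only on the rotation-invariant scalars $\lambda_j$ and $F_j$, deduce the covariance identity $\bansatz[\mathbb{T}\bE](\bOmega)=\bansatz[\bE](\mathbb{O}^T\bOmega)$, and then change variables in the flux integral to obtain $\mathbb{T}^{-1}\bff_x(\mathbb{T}\bE)$. Your explicit discussion of the eigenframe ambiguity (sign flips, index permutations, degenerate spectra) goes beyond the paper, which tacitly assumes a consistent eigenframe choice without addressing well-definedness; this extra care is appropriate and the argument you sketch for it is sound.
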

\begin{proof}
  For any unit vector $\bn = (n_x, n_y, n_z) \in \mathbb{R}^3$, there
  exists a rotation to transform $\bn$ to the $x$-axis. Let 
  $\bn = \bbrT \be_x$, where $\bbrT$ is the rotation matrix. The
  rotated velocity is denoted by $\tilde{\bOmega} = \bbrT^T \bOmega$.
  We denote $\tilde{\be}_x = \bn = \bbrT \be_x$,
  $\tilde{\be}_y = \bbrT \be_y$ and $\tilde{\be}_z = \bbrT \be_z$.
  After the rotation, the known moments are denoted by $\tilde{\bE}$,
  and we write the ansatz before and after the rotation with
  explicit dependence on the known moments by
  $\bansatz(\bOmega; \bE)$ and $\bansatz(\bOmega; \tilde{\bE})$. We
  use $\tilde{E}^0$, $\tilde{\bE}^1$, and $\tilde{\bbrE}^2$ to denote
  the corresponding moments after the rotation, respectively. Let us
  define $\tilde{\bv}$ as
  \[
  \begin{array}{rllll}
    \tilde{\bv} = [ & 1, &&& \\ [2mm]
                    & (\bOmega\cdot\tilde{\be}_x), 
                         & (\bOmega\cdot\tilde{\be}_y), 
                           & (\bOmega\cdot\tilde{\be}_z), & \\ [2mm]
                    & (\bOmega\cdot\tilde{\be}_x)^2, 
                         & (\bOmega\cdot\tilde{\be}_x)(\bOmega\cdot\tilde{\be}_y), 
                           & (\bOmega\cdot\tilde{\be}_x)(\bOmega\cdot\tilde{\be}_z), & \\
                    && (\bOmega\cdot\tilde{\be}_y)^2, 
                         &(\bOmega\cdot\tilde{\be}_y)(\bOmega\cdot\tilde{\be}_z) & ]^T.
  \end{array}
  \]
  It is clear there exists a transformation matrix $\mathbb{T}$
  which depends only on $\bbrT$ such that
  \[
    \tilde{\bv} = \mathbb{T} \bv,
  \]
  where $\bv$ is defined in \eqref{eq:v-def}.
  Thus, the known moments satisfy $\tilde{\bE} = \mathbb{T} \bE$ and
  \[ 
  \tilde{E}^0 = E^0, \quad \tilde{\bE}^1 = \bbrT \bE^1, \quad
  \tilde{\bbrE}^2 = \bbrT^T \bbrE^2 \bbrT.
  \]
  Consequently, the eigenvectors $\tilde{\bR}_i$ of $\tilde{\bbrE}^2$
  are $\tilde{\bR}_i = \bbrT^T \bR_i$, and thus,
  \[
  \begin{aligned}
    &\tilde{\bOmega} \cdot \tilde{\bR}_i = \bOmega \cdot \bR_i, \\
    &\tilde{F}_i = \tilde{\bE}^1 \cdot \tilde{\bR}_i = \bE^1 \cdot
    \bR_i = F_i.
  \end{aligned}
  \]
  The given closure for $w_i$, $\gamma_i$, and $\delta_i$
  are functions of the eigenvalues of $\bbrE^2$ and $F_i$, $i=1,2,3$. Thus,
  these parameters are exactly the same before and after the
  rotation. Therefore, the ansatz after the rotation satisfies 
  \[
  \begin{aligned}
    \bansatz(\tilde{\bOmega}; \mathbb{T} \bE) &= \sum_{i=1}^3
    \dfrac1{2\pi} w_i f(\tilde{\bOmega} \cdot \tilde{\bR}_i; \gamma_i,
    \delta_i) \\
    &= \sum_{i=1}^3 \dfrac1{2\pi} w_i f(\bOmega \cdot \bR_i; \gamma_i,
    \delta_i) = \bansatz(\bOmega; \bE).
  \end{aligned}
  \]
  Meanwhile, notice that we have the relation
  \[
  \begin{array}{rllll}
    \tilde{\bv} = [ & 1, &&& \\ [2mm]
                    & (\tilde{\bOmega}\cdot \be_x), 
                         & (\tilde{\bOmega}\cdot \be_y), 
                           & (\tilde{\bOmega}\cdot \be_z), & \\ [2mm]
                    & (\tilde{\bOmega}\cdot\be_x)^2, 
                         & (\tilde{\bOmega}\cdot\be_x)(\tilde{\bOmega}\cdot\be_y), 
                           & (\tilde{\bOmega}\cdot\be_x)(\tilde{\bOmega}\cdot\be_z), & \\
                    && (\tilde{\bOmega}\cdot\be_y)^2, 
                         &(\tilde{\bOmega}\cdot\be_y)(\tilde{\bOmega}\cdot\be_z)
                           & ]^T \\
  \end{array}
  \]
  Therefore,
  \[
  \begin{split}
    n_x \bff_x(\bE) + n_y \bff_y(\bE) + n_z \bff_z(\bE) = &
    \int_{\bbS^2}(n_x \Omega_x + n_y \Omega_y + n_z \Omega_z)
    \bv \bansatz (\bOmega; \bE) \dd\bOmega \\
    = & \int_{\bbS^2}(\bOmega \cdot \tilde{\be}_x)~ (\mathbb{T}^{-1}
    \tilde{\bv})~
    \bansatz(\tilde{\bOmega}; \mathbb{T} \bE) ~ | \bbrT | ~\dd \tilde{\bOmega} \\
    = & ~\mathbb{T}^{-1} \int_{\bbS^2} (\tilde{\bOmega} \cdot \be_x)~
    \tilde{\bv}
    \bansatz(\tilde{\bOmega}; \mathbb{T} \bE) \dd \tilde{\bOmega} \\
    = & ~\mathbb{T}^{-1} \int_{\bbS^2} (\bOmega \cdot \be_x)~ \bv
    \bansatz(\bOmega; \mathbb{T} \bE) \dd \bOmega \\
    = & ~\mathbb{T}^{-1} \bff_x(\mathbb{T} \bE).
  \end{split}
  \]
  This gives us rotational invariance \footnote{We note that the
    proof is not at all dependent on whether the function $f$ in the ansatz is
    assigned as a beta distribution.}.
\end{proof}

Let us turn to the realizability of our model. First, we point out
that the 3D $B_2$ model provides a non-negative ansatz even for some
moments on the boundary of the realizability domain. For example,
the moments satisfying $|F_i| = \lambda_i$, $\forall i = 1, 2, 3$, 
correspond to ans\"atze of the form
\[
  \bansatz = \sum\limits_{i=1}^3
  \left[\alpha^+_i
    \delta(\bOmega\cdot\bR_i-1)+
    \alpha^-_i
  \delta(\bOmega\cdot\bR_i+1)\right].
\]
We recall the following results from Lemma \ref{lem:realizable-bound}: if
$\lambda_i$ are distinct positive values, then the eight vertices of 
the rectangular box $|F_j| \leq \lambda_j$, $j = 1,2,3$, are the only points 
on the boundary of the realizability domain where a non-negative 
ansatz for $\bansatz$ may exist.
Moreover, the ansatz contains the equilibrium distribution.
Moments satisfying $\lambda_i = \dfrac{E^0}{3}$, $i=1,2,3$, and
$\bE^1 = \boldsymbol{0}$ reproduce $\bansatz = \dfrac{E^0}{4\pi}$.

Recall that 
\eqref{eq:condition-positive-ansatz} is a sufficient condition for
\eqref{eq:B2-ansatz} to give a non-negative ansatz. It is equivalent
to
\begin{equation}\label{eq:condition-realizable-v1}
  0 \leq \sigma_i \leq w_i,\quad
  \text{and}\quad \sigma_i w_i \geq F_i^2,\quad
  i = 1,2,3.
\end{equation}
We examine this condition to check the realizability of our
model. Define the following discriminant
\begin{equation}\label{eq:discriminant}
  \Delta \triangleq \min \left\{w_1 \sigma_1 - F_1^2, w_2 \sigma_2 - F_2^2, w_3
  \sigma_3 - F_3^2 \right\}.
\end{equation}
Instantly, we have
\begin{theorem}\label{thm:condition-nonnegative-ansatz}
  For $|F_j| \leq \lambda_j \neq 0$, $j = 1,2,3$, the 3D $B_2$ model has
  a non-negative ansatz $\bansatz$ if 
  \begin{equation}\label{eq:cond-sigma-positive}
    3 \lambda_i^2 + \lambda_i (\lambda_j + \lambda_k) - \lambda_j
    \lambda_k > 0, \quad \forall~i,j,k,~~\text{mutually different},
  \end{equation}
  and
  \[ 
  \Delta \geq 0.
  \]
\end{theorem}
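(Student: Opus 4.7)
The plan is to verify the sufficient condition \eqref{eq:condition-realizable-v1} for non-negativity of $\bansatz$, namely $0 \leq \sigma_i \leq w_i$ and $w_i \sigma_i \geq F_i^2$ for each $i = 1,2,3$. The last inequality is immediate from the hypothesis $\Delta \geq 0$ and definition \eqref{eq:discriminant}. What remains is to prove the two bounds $0 \leq \sigma_i$ and $\sigma_i \leq w_i$ from the hypothesis \eqref{eq:cond-sigma-positive}; these will come respectively from an upper estimate on the function $g$ and from the formula \eqref{eq:w-formula} that links $w_i$ and $\sigma_i$.

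For the upper bound, relation \eqref{eq:w-formula} gives $w_i - \sigma_i = 2g(\lambda_j,\lambda_k;F_j,F_k)$ whenever $i,j,k$ are mutually distinct. Introducing $a = x - F_x^2/x$ and $b = y - F_y^2/y$, a direct substitution of the definitions of $q$ and $r$ into the expression for $g$ yields the clean form
\[
g(x,y;F_x,F_y) = \dfrac{2ab(a+b)}{3(x+y)^2}.
\]
The hypothesis $|F_j| \leq \lambda_j$ forces $a,b \geq 0$, hence $g \geq 0$, and the bound $\sigma_i \leq w_i$ follows immediately.

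For the lower bound, I observe that $a$ and $b$ attain their maxima $x$ and $y$ precisely at $F_x = F_y = 0$, so $g(x,y;F_x,F_y) \leq g(x,y;0,0) = 2xy/(3(x+y))$. Substituting this worst-case estimate into each of the two $g$-terms appearing in \eqref{eq:sigma-formula} yields
\[
\sigma_i \geq \lambda_i - \dfrac{2\lambda_i\lambda_j}{3(\lambda_i+\lambda_j)} - \dfrac{2\lambda_i\lambda_k}{3(\lambda_i+\lambda_k)}.
\]
Clearing denominators (multiply through by the positive quantity $3(\lambda_i+\lambda_j)(\lambda_i+\lambda_k)/\lambda_i$) and expanding, the right-hand side is non-negative exactly when $3\lambda_i^2 + \lambda_i(\lambda_j+\lambda_k) - \lambda_j\lambda_k \geq 0$, which is precisely \eqref{eq:cond-sigma-positive}. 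Thus $\sigma_i \geq 0$.

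The principal technical step, and the only place where real insight is required, is the rewriting $g = 2ab(a+b)/(3(x+y)^2)$: this strips away the explicit dependence on $F_x, F_y$ and exposes monotonicity of $g$ in the simple quantities $a,b\in[0,x]\times[0,y]$, reducing the problem to the worst-case $F_x = F_y = 0$. Moreover, because the two summands in \eqref{eq:sigma-formula} share no common $F$-variables once $F_i$ is fixed, the worst case for both can be attained simultaneously by taking $F_1 = F_2 = F_3 = 0$, so the derived inequality is tight and \eqref{eq:cond-sigma-positive} is the sharp threshold. Everything else reduces to routine algebraic bookkeeping.
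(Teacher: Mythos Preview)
Your proof is correct and follows essentially the same approach as the paper: verify the sufficient condition \eqref{eq:condition-realizable-v1} by showing $w_i-\sigma_i=2g(\lambda_j,\lambda_k;F_j,F_k)\geq 0$ in the box $|F_j|\leq\lambda_j$, and then reduce $\sigma_i\geq 0$ to the worst case $F_1=F_2=F_3=0$ where it becomes exactly \eqref{eq:cond-sigma-positive}. Your substitution $a=x-F_x^2/x$, $b=y-F_y^2/y$ giving $g=2ab(a+b)/(3(x+y)^2)$ is a clean unification: the paper instead writes out $w_1-\sigma_1$ explicitly to see non-negativity and separately asserts that $\sigma_1$ is monotone in each $|F_j|$, whereas your form makes both facts immediate from the single observation that $g$ is increasing in $a,b\in[0,x]\times[0,y]$.
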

\begin{proof}
  We first prove $\sigma_1 \leq w_1$. Notice that 
  \[
    w_1 - \sigma_1 = 2 g(\lambda_2,\lambda_3; F_2,F_3)
    = \dfrac{4 \left(\lambda_2 - \dfrac{F_2^2}{\lambda_2}\right)
      \left(\lambda_3-\dfrac{F_3^2}{\lambda_3}\right)
      \left(\lambda_2 - \dfrac{F_2^2}{\lambda_2} +
    \lambda_3 - \dfrac{F_3^2}{\lambda_3}\right)}{3
    (\lambda_2 + \lambda_3)^2}.
  \]
  Also, if $|F_j| \leq \lambda_j$, we have 
  \[
    \lambda_j - \dfrac{F_j^2}{\lambda_j} \geq 0.
  \]
  Therefore, inside the rectangular box $|F_j| \leq \lambda_j$, $j = 1,2,3$,
  we have $w_1 - \sigma_1 \geq 0$. Similarly, we could prove
  $\sigma_2 \leq w_2$ and $\sigma_3 \leq w_3$.

  We now discuss the condition for $\sigma_i \geq 0$, $i = 1,2,3$.
  We begin by examining $\sigma_1$. From \eqref{eq:sigma-formula}, we see that 
  for fixed $\lambda_i$, $i = 1,2,3$, the function $\sigma_1$ monotonically increases
  for any $|F_j|$. Therefore, if $\sigma_1 \geq 0$ holds for $\bE^1 = \boldsymbol{0}$, 
  then it is valid for the whole rectangular box
  $|F_j| \leq \lambda_j$, $j = 1,2,3$. So, the problem becomes seeking
  $(\lambda_1, \lambda_2,\lambda_3)$ for which
  $\left.\sigma_1\right|_{F_1 = F_2 = F_3 = 0} \geq 0$ holds. As
  \[
    \left.\sigma_1\right|_{F_1=F_2=F_3=0} = 
    \dfrac{\lambda_1(3\lambda_1^2 + \lambda_1\lambda_2 + \lambda_1 \lambda_3
    - \lambda_2\lambda_3)}{3(\lambda_1 + \lambda_2)(\lambda_1 + \lambda_3)},
  \]
  the necessary and sufficient condition for $\sigma_1 > 0$ is
  \begin{equation}\label{eq:cond-sigma-positive-1}
    3\lambda_1^2 + \lambda_1(\lambda_2 + \lambda_3) - \lambda_2\lambda_3 
    > 0,
  \end{equation}
which completes our proof.
\end{proof}
From the proof of Theorem \ref{thm:condition-nonnegative-ansatz}
we have the following corollary.
\begin{corollary}
  Let $\bE^1 = \boldsymbol{0}$.  If \eqref{eq:cond-sigma-positive} is
  valid and $\lambda_i \neq 0$, $\forall i = 1,2,3$, the 3D $B_2$
  model has a non-negative ansatz.
\end{corollary}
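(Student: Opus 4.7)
The plan is to derive the corollary as an immediate specialization of Theorem \ref{thm:condition-nonnegative-ansatz} to the vanishing first-order moment case. Concretely, I would verify each of the three hypotheses of the theorem at $\bE^1 = \boldsymbol{0}$ and observe that the only nontrivial one, the discriminant bound $\Delta \geq 0$, is a direct byproduct of the inequalities already established inside the proof of the theorem.

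First I would note that setting $\bE^1 = \boldsymbol{0}$ forces $F_i = 0$ for $i = 1,2,3$, so the containment condition $|F_j| \leq \lambda_j$ used in the theorem is automatic under the assumption $\lambda_j \neq 0$ (and in fact $\lambda_j \geq 0$ suffices, but positivity is given). The second hypothesis, inequality \eqref{eq:cond-sigma-positive}, is assumed outright in the corollary.

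Next I would argue that $\Delta \geq 0$ comes for free. With $F_i = 0$ the discriminant in \eqref{eq:discriminant} reduces to
\[
\Delta = \min\{w_1 \sigma_1,\, w_2 \sigma_2,\, w_3 \sigma_3\}.
\]
Inside the proof of Theorem \ref{thm:condition-nonnegative-ansatz} two facts are established under the present hypotheses: (i) the closed-form expression for $w_i - \sigma_i = 2\,g(\lambda_j,\lambda_k;F_j,F_k)$ is non-negative throughout the box $|F_j| \leq \lambda_j$, so in particular $w_i \geq \sigma_i$ at $\bE^1 = \boldsymbol{0}$; and (ii) evaluating $\sigma_i$ at $\bE^1 = \boldsymbol{0}$ gives the explicit formula $\sigma_i = \lambda_i(3\lambda_i^2 + \lambda_i(\lambda_j + \lambda_k) - \lambda_j\lambda_k)/[3(\lambda_i + \lambda_j)(\lambda_i + \lambda_k)]$, whose positivity is exactly \eqref{eq:cond-sigma-positive}. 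Combining (i) and (ii) yields $w_i \geq \sigma_i \geq 0$, hence $w_i\sigma_i \geq 0 = F_i^2$ for each $i$, and therefore $\Delta \geq 0$.

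There is no serious obstacle: the corollary is really a packaging statement that isolates the case $\bE^1 = \boldsymbol{0}$, where the containment hypothesis of the theorem is trivially satisfied and the sign of the discriminant reduces to the sign of $w_i \sigma_i$, both factors of which have already been controlled in the course of proving the theorem. The only point that deserves a brief mention in the write-up is that the $\bE^1 = \boldsymbol{0}$ slice is genuinely contained in the box $|F_j| \leq \lambda_j$ once $\lambda_j \neq 0$ is assumed, so that the monotonicity argument (used in the theorem to reduce positivity of $\sigma_i$ to the centered case) is not even needed here.
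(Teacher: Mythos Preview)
Your proposal is correct and matches the paper's own argument: both derive the corollary directly from Theorem \ref{thm:condition-nonnegative-ansatz} by observing that when $\bE^1 = \boldsymbol{0}$ the discriminant condition $\Delta \geq 0$ is automatic, since $F_i = 0$ reduces it to the nonnegativity of $w_i\sigma_i$, which follows from $\sigma_i > 0$ (equivalent to \eqref{eq:cond-sigma-positive} with $\lambda_i \neq 0$) and $w_i \geq \sigma_i$ established in the theorem's proof. The paper's proof is a one-line remark to this effect; your version simply unpacks the same reasoning in more detail.
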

\begin{proof}
  In the case of $\bE^1 = \boldsymbol{0}$, $\Delta > 0$ is automatically
  valid under the conditions specified in the corollary.
\end{proof}

Given $\lambda_i$ and $F_i$, $i=1,2,3$, we could use the 
condition placed on the discriminant $\Delta$ in Theorem 
\ref{thm:condition-nonnegative-ansatz} to verify whether 
a non-negative ansatz exists. For each fixed 
$(\lambda_1, \lambda_2, \lambda_3)$, we sample for  
the whole region within the rectangular box $|F_j| \leq \lambda_j$, 
$j = 1,2,3$. It is found that if 
$\dfrac{\lambda_i}{E^0}\geq\dfrac17$, $i = 1,2,3$, then for any
$(F_1, F_2, F_3)$ belonging to the region $|F_j| \leq \lambda_j$,
$j = 1,2,3$, the 3D $B_2$ model has a non-negative ansatz. Note
that the realizability domain for $F_j$ is the ellipsoid given in Lemma
\ref{proposition-realizability}, and the rectangular box $|F_j| \leq \lambda_j$,
$j = 1,2,3$, is contained within the ellipsoid, with its eight
vertices touching the domain boundary. 
Figure \ref{fig:realizable-region} illustrates the region that is found to
admit a non-negative ansatz.
\begin{figure}
  \subfigure[$\left(\dfrac{\lambda_1}{E^0},\dfrac{\lambda_2}{E^0},
    \dfrac{\lambda_3}{E^0}\right)$ are taken as barycentric coordinates within the 
    triangle. The outer triangle is the realizability domain. The curves
    correspond to the outer boundary of the constraints \eqref{eq:cond-sigma-positive}.
    The blue region gives non-negative ansatz for 3D $B_2$ model
    for all $\bE^1$ satisfying $|F_j|\leq\lambda_j$, $j = 1,2,3$. 
  ]{
  \includegraphics[width=0.4\textwidth]{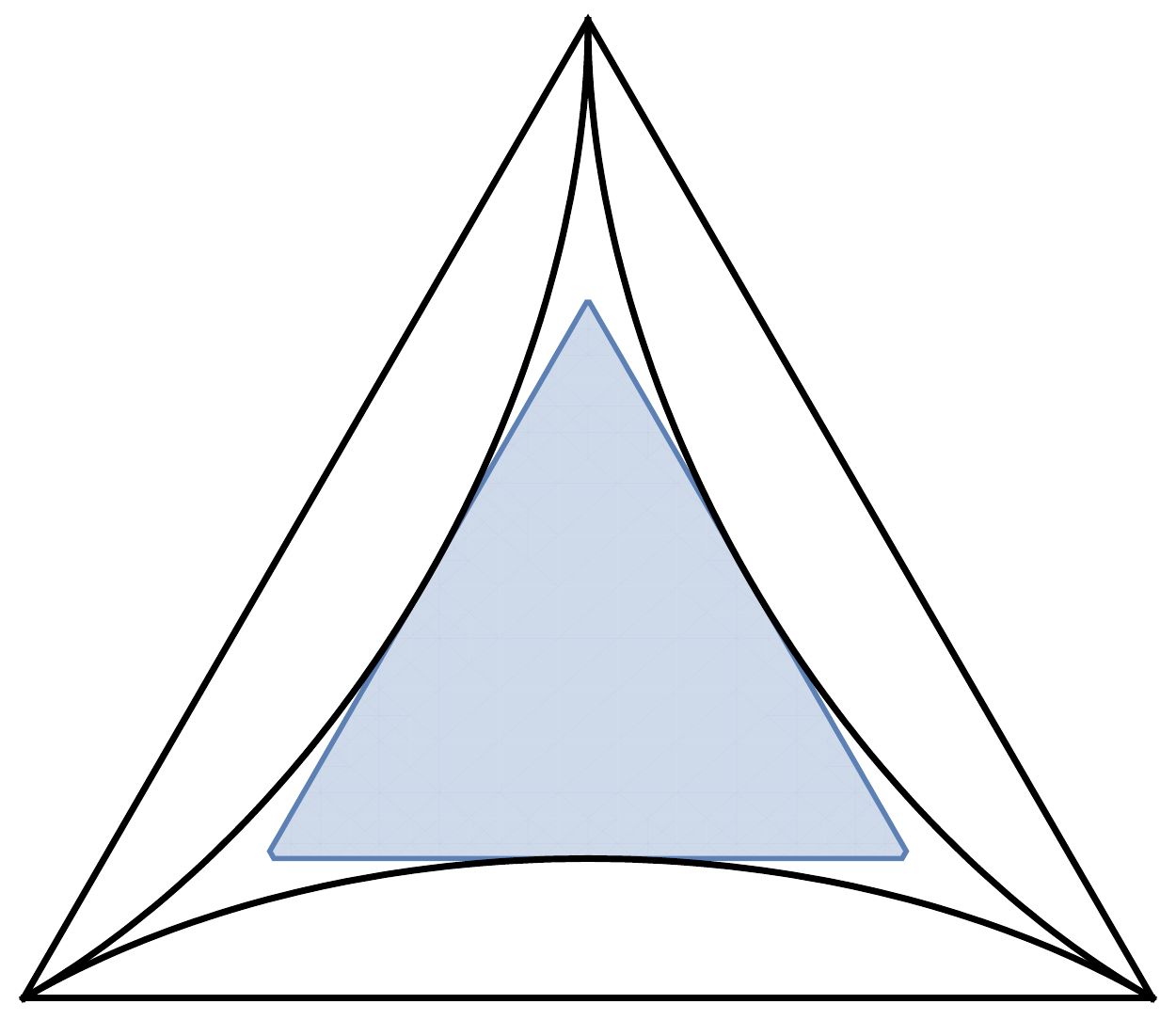}}  
  \hfill
  \subfigure[The sphere correspond to the realizability domain of $\bE^1$ when
    $\lambda_1 = \lambda_2 = \lambda_3$.The rectangle within the sphere is
  the region for $\bE^1$ when the 3D $B_2$ model has a non-negative ansatz.]{
  \includegraphics[width=0.35\textwidth]{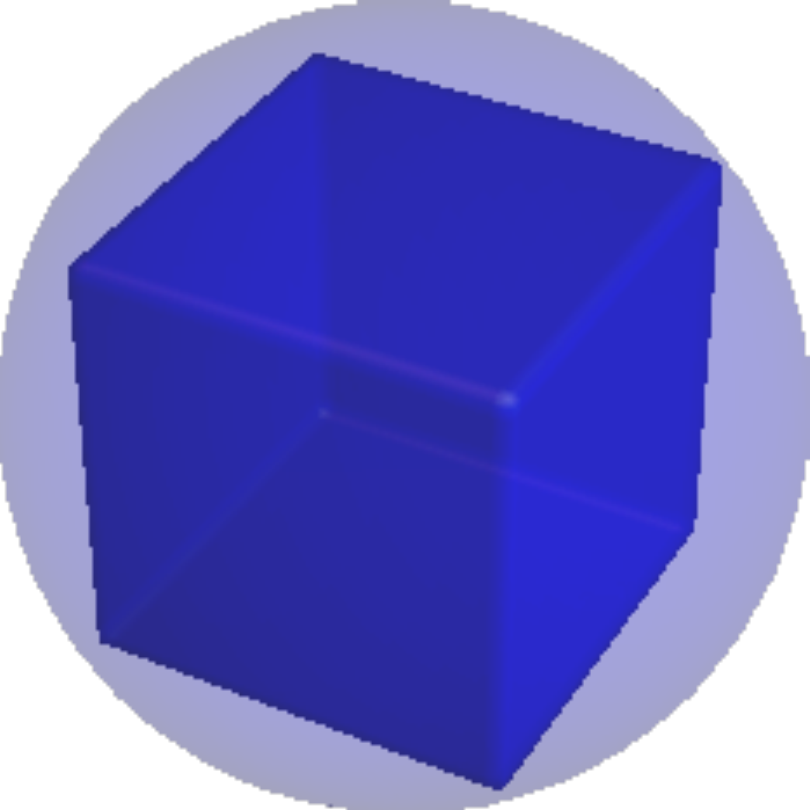}} 
  \hfill
  \caption{Region which correspond to a non-negative ansatz for 3D $B_2$ model.}
  \label{fig:realizable-region}
\end{figure}

\begin{remark}
  By Lemma \ref{lem:E3}, for $\bE^1=\boldsymbol{0}$, the third-order
  moments given by the 3D $B_2$ ansatz is a zero tensor, equal to that given by
  $M_2$. For this particular case, even when there is no non-negative ansatz, 
  the closure relation is still realizable.
\end{remark}

We proceed to study the hyperbolicity of the model. Due to the extreme
complexity of the formula, we restrict our discussions to the case that
$\bE^1 = \boldsymbol{0}$. We first prove the following facts:
\begin{lemma}\label{lem:fact-bound-sigma-w}
  In the interior of the realizability domain $\cM$, if
  $\bE^1 = \boldsymbol{0}$, we have
  \[
  w_i > 0,\quad \sigma_i + w_i > 0,\quad i = 1,2,3.
  \]
\end{lemma}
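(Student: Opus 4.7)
The plan is to substitute $\bE^1 = \boldsymbol{0}$ into the closure formulas \eqref{eq:sigma-formula} and \eqref{eq:w-formula} to get explicit rational expressions in $\lambda_1, \lambda_2, \lambda_3$, and then demonstrate positivity by clearing positive denominators and checking that the resulting polynomial has non-negative coefficients, with at least one term strictly positive.

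First I would carry out the substitution. With $F_x = F_y = 0$, the auxiliary functions simplify to $q(x,y;0,0) = xy$ and $r(x,y;0,0) = -1$, so $g(x,y;0,0) = \dfrac{2xy}{3(x+y)}$. Plugging into \eqref{eq:sigma-formula} recovers the identity already used in the proof of Theorem \ref{thm:condition-nonnegative-ansatz}, namely
\[
\sigma_1 = \frac{\lambda_1(3\lambda_1^2 + \lambda_1\lambda_2 + \lambda_1\lambda_3 - \lambda_2\lambda_3)}{3(\lambda_1+\lambda_2)(\lambda_1+\lambda_3)},
\]
and by \eqref{eq:w-formula},
\[
w_1 = \sigma_1 + \frac{4\lambda_2\lambda_3}{3(\lambda_2+\lambda_3)}.
\]
Since the interior of $\cM$ requires $\lambda_i > 0$ for $i=1,2,3$, every denominator that appears is strictly positive, so everything reduces to sign analysis of numerators.

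For $w_1 > 0$, I would clear the denominator by multiplying by $3(\lambda_1+\lambda_2)(\lambda_1+\lambda_3)(\lambda_2+\lambda_3)$ and expand. The direct computation yields
\[
3(\lambda_1+\lambda_2)(\lambda_1+\lambda_3)(\lambda_2+\lambda_3)\, w_1 = 3\lambda_1^3(\lambda_2+\lambda_3) + \lambda_1^2(\lambda_2+\lambda_3)^2 + 4\lambda_1^2\lambda_2\lambda_3 + 3\lambda_1\lambda_2\lambda_3(\lambda_2+\lambda_3) + 4\lambda_2^2\lambda_3^2,
\]
which is a sum of non-negative monomials in the $\lambda_i$, and is strictly positive whenever all $\lambda_i > 0$. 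For $\sigma_1 + w_1 > 0$, writing $\sigma_1 + w_1 = 2\sigma_1 + \frac{4\lambda_2\lambda_3}{3(\lambda_2+\lambda_3)}$ and clearing the same denominator gives the analogous identity
\[
3(\lambda_1+\lambda_2)(\lambda_1+\lambda_3)(\lambda_2+\lambda_3)(\sigma_1 + w_1) = 6\lambda_1^3(\lambda_2+\lambda_3) + 2\lambda_1^2\!\left((\lambda_2+\lambda_3)^2 + 2\lambda_2\lambda_3\right) + 2\lambda_1\lambda_2\lambda_3(\lambda_2+\lambda_3) + 4\lambda_2^2\lambda_3^2,
\]
again a sum of non-negative terms that is strictly positive in the interior. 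The cases $i=2,3$ follow immediately from the symmetry of \eqref{eq:sigma-formula} and \eqref{eq:w-formula} under permutation of indices.

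The only subtle point is that the numerator of $\sigma_1$ alone is not sign-definite because of the $-\lambda_2\lambda_3$ contribution (indeed $\sigma_1$ can be negative, as Theorem \ref{thm:condition-nonnegative-ansatz} already reveals through the condition \eqref{eq:cond-sigma-positive}). The heart of the argument is that this negative contribution is always dominated once the extra positive terms coming from $\frac{4\lambda_2\lambda_3}{3(\lambda_2+\lambda_3)}$ are added in, and the main obstacle in the write-up is merely bookkeeping of the polynomial expansion to see explicitly that every monomial in the final numerator has a non-negative coefficient.
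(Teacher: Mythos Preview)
Your argument is correct. Both you and the paper begin by reducing $g(x,y;0,0)$ to $\dfrac{2xy}{3(x+y)}$, but from there the strategies diverge. The paper keeps the expression for $w_1$ (and for $\sigma_1+w_1$) as a sum of four simple fractions, rewrites $2\lambda_1 - \dfrac{2\lambda_1\lambda_3}{\lambda_1+\lambda_3}$ as $\dfrac{2\lambda_1^2}{\lambda_1+\lambda_3}$, and then splits into two cases ($\lambda_1 \geq \lambda_2$ versus $\lambda_1 < \lambda_2 \leq \lambda_3$), bounding the one negative-looking term $-\dfrac{2\lambda_1\lambda_2}{\lambda_1+\lambda_2}$ by either $-\lambda_1$ or $-\lambda_2$ as appropriate. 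Your route instead clears the common denominator $3(\lambda_1+\lambda_2)(\lambda_1+\lambda_3)(\lambda_2+\lambda_3)$ once and for all and exhibits the resulting numerator as a manifest sum of non-negative monomials (I checked your two displayed identities and they are right). Your method trades the paper's case split for a single polynomial expansion; it is slightly more mechanical but has the advantage of being case-free and of making completely transparent why the $-\lambda_2\lambda_3$ contribution inside $\sigma_1$ is absorbed.
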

\begin{proof}
  Take $i = 1$ for example. First, note
  \[
    g(x,y; 0,0) = \dfrac{2 q(x,y; 0,0)(x + y -1 -r(x,y;0,0))}{3(x+y)^2} 
    = \dfrac{2 xy}{3(x+y)}.
  \]
  Therefore,
  \[
    \begin{split}
      w_1 =    & \lambda_1 - g(\lambda_1,\lambda_2;0,0) -g(\lambda_1, \lambda_3;0,0)
      + 2 g(\lambda_2,\lambda_3;0,0) \\
      = & \dfrac13\left(3\lambda_1 - \dfrac{2\lambda_1 \lambda_2}
      {\lambda_1 + \lambda_2} - \dfrac{2\lambda_1 \lambda_3}
      {\lambda_1 + \lambda_3} + \dfrac{4 \lambda_2 \lambda_3}
    {\lambda_2 + \lambda_3} \right) \\
    = & \dfrac13\left(\lambda_1 - \dfrac{2\lambda_1 \lambda_2}
    {\lambda_1 + \lambda_2} + 2 \lambda_1 - \dfrac{2 \lambda_1 \lambda_3}
    {\lambda_1 + \lambda_3}
  + \dfrac{4 \lambda_2 \lambda_3}{\lambda_2 + \lambda_3}\right) \\
  = & \dfrac13\left(\lambda_1 - \dfrac{2\lambda_1 \lambda_2}
  {\lambda_1 + \lambda_2} + \dfrac{2 \lambda_1(\lambda_1 + \lambda_3 - \lambda_3)}
  {\lambda_1 + \lambda_3}
+ \dfrac{4 \lambda_2 \lambda_3}{\lambda_2 + \lambda_3}\right) \\
= & \dfrac13\left(\lambda_1 - \dfrac{2\lambda_1 \lambda_2}
{\lambda_1 + \lambda_2} + \dfrac{2 \lambda_1^2}{\lambda_1 + \lambda_3}
    + \dfrac{4 \lambda_2 \lambda_3}{\lambda_2 + \lambda_3}\right). 
  \end{split}
\]
We need to prove $w_1 \geq 0$ for two cases: 
  \begin{enumerate}
  \item $\lambda_1 \geq \lambda_2$ or $\lambda_1 \geq \lambda_3$.
    Because $w_1$ is symmetric with respect to $\lambda_2$ and
    $\lambda_3$, we only need to discuss the case 
    $\lambda_1 \geq \lambda_2$.
  \item $\lambda_1 < \lambda_2$ and $\lambda_1 < \lambda_3$.
    Due to $w_1$ being symmetric about $\lambda_2$ and $\lambda_3$,
    we only need to discuss the case $\lambda_1 < \lambda_2 \leq
    \lambda_3$.
  \end{enumerate}
  The proof is as follows:
  \begin{enumerate}
  \item If $\lambda_1 \geq \lambda_2$, then
    \[
    -\dfrac{2\lambda_1 \lambda_2}{\lambda_1 + \lambda_2}
    \geq -\dfrac{2\lambda_1 \lambda_2}{\lambda_2 + \lambda_2}
    = -\lambda_1,
    \]
    therefore
    \[
    w_1 = \dfrac13\left(\lambda_1 - \dfrac{2\lambda_1 \lambda_2}
      {\lambda_1 + \lambda_2} + \dfrac{2 \lambda_1^2}{\lambda_1 + \lambda_3}
      + \dfrac{4 \lambda_2 \lambda_3}{\lambda_2 + \lambda_3}\right)
    \geq \dfrac13\left(\dfrac{2 \lambda_1^2}{\lambda_1 + \lambda_3}
      + \dfrac{4 \lambda_2 \lambda_3}{\lambda_2 + \lambda_3}\right) > 0.
    \]
  \item If $\lambda_1 < \lambda_2 \leq \lambda_3$, then
    \[
    -\dfrac{2\lambda_1 \lambda_2}{\lambda_1 + \lambda_2}
    \geq -\dfrac{2\lambda_1 \lambda_2}{\lambda_1 + \lambda_1}
    = -\lambda_2,
    \]
    and
    \[
    \dfrac{4\lambda_2 \lambda_3}{\lambda_2 + \lambda_3} \geq
    \dfrac{4\lambda_2 \lambda_3}{\lambda_3 + \lambda_3}
    = 2\lambda_2.
    \]
    Therefore
    \[
    w_1 = \dfrac13\left(\lambda_1 - \dfrac{2\lambda_1 \lambda_2}
      {\lambda_1 + \lambda_2} + \dfrac{2 \lambda_1^2}{\lambda_1 + \lambda_3}
      + \dfrac{4 \lambda_2 \lambda_3}{\lambda_2 + \lambda_3}\right)
    \geq \dfrac13\left(\lambda_1 - \lambda_2 + 
      \dfrac{2 \lambda_1^2}{\lambda_1 + \lambda_3} + 2\lambda_2
    \right)
    > 0.
    \]
  \end{enumerate}
  This proves $w_1 > 0$. Similarly, $w_j > 0$, $j = 2,3$.

  Next, we prove $\sigma_1 + w_1 > 0$. We have
  \[
    \sigma_1 + w_1   = \dfrac23\left(3\lambda_1 - \dfrac{2\lambda_1 \lambda_2}
      {\lambda_1 + \lambda_2} - \dfrac{2\lambda_1 \lambda_3}
      {\lambda_1 + \lambda_3} + \dfrac{2 \lambda_2 \lambda_3}
    {\lambda_2 + \lambda_3} \right)
    = \dfrac23\left(\lambda_1 - 
      \dfrac{2\lambda_1 \lambda_2}{\lambda_1 + \lambda_2}
      + \dfrac{2\lambda_1^2}{\lambda_1 + \lambda_3} + 
      \dfrac{2\lambda_2 \lambda_3}{\lambda_2 + \lambda_3}
    \right).
  \]
  Similar to discussions on $w_1$, we have
  \begin{enumerate}
  \item If $\lambda_1 \geq \lambda_2$, then
    \[
    \sigma_1 + w_1 =   \dfrac23\left(\lambda_1 - 
      \dfrac{2\lambda_1 \lambda_2}{\lambda_1 + \lambda_2}
      + \dfrac{2\lambda_1^2}{\lambda_1 + \lambda_3} + 
      \dfrac{2\lambda_2 \lambda_3}{\lambda_2 + \lambda_3}
    \right)
    \geq 
    \dfrac23\left(\dfrac{2\lambda_1^2}{\lambda_1 + \lambda_3} + 
      \dfrac{2\lambda_2 \lambda_3}{\lambda_2 + \lambda_3}
    \right).
    \]
  \item If $\lambda_1 < \lambda_2 \leq \lambda_3$, then
    \[
    \sigma_1 + w_1 = \dfrac23\left(\lambda_1 - 
      \dfrac{2\lambda_1 \lambda_2}{\lambda_1 + \lambda_2}
      + \dfrac{2\lambda_1^2}{\lambda_1 + \lambda_3} + 
      \dfrac{2\lambda_2 \lambda_3}{\lambda_2 + \lambda_3}
    \right)
    \geq \dfrac23\left(\lambda_1 - \lambda_2 + 
      \dfrac{2 \lambda_1^2}{\lambda_1 + \lambda_3} + \lambda_2
    \right)
    > 0.
    \]
  \end{enumerate}
  Similarly, $\sigma_i + w_i > 0$, $i = 2,3$.
\end{proof}

To study hyperbolicity, we start with calculating the Jacobian
matrix of the flux $\bff_x$, $\bff_y$, and $\bff_z$. Due to the rotational
invariance of the 3D $B_2$ model, it could be assumed without loss of
generality that $\bbrE^2$ is diagonal, $\bR_1$ is parallel to
the $x$-axis, $\bR_2$ is parallel to the $y$-axis, and $\bR_3$ is
parallel to the $z$-axis, respectively. The most involving part in
calculating the Jacobian matrix is the derivatives of third-order
moments. We first note that, by Lemma \ref{lem:E3}, fixing
$\bE^1 = \boldsymbol{0}$ makes the value of all third-order moments
zero, no matter what the values of the other moments are. Therefore,
\[
\pd{E^3_{ijk}}{E^0} = 0, \quad \pd{E^3_{ijk}}{E^2_{lm}} = 0, 
\quad\forall i,j,k,l,m = 1,2,3.
\]
So we only need to compute $\pd{E^3_{ijk}}{E^1_l}$.
First, we have
\[
\begin{split}
  \pd{E^3_{123}}{E^1_l} = & \pd{\Vint{(\bOmega\cdot\bR_i) (\bOmega\cdot\bR_j)
      (\bOmega\cdot\bR_k) \bansatz}}{E^1_l} R_{i1} R_{j2} R_{k3}\\
  = & \pd{\Vint{(\bOmega\cdot\bR_1) (\bOmega\cdot\bR_2) (\bOmega\cdot\bR_3) \bansatz}}{E^1_l} 
  =  0.
\end{split}
\]
For the terms $\pd{E^3_{iij}}{E^1_k}$, we have
\[
\pd{E^3_{iij}}{E^1_k} = \pd{\Vint{(\bOmega\cdot\bR_l) (\bOmega\cdot\bR_m)
    (\bOmega\cdot\bR_n) \bansatz}}{E^1_k} R_{li} R_{mi} R_{nj} 
= \pd{\Vint{(\bOmega\cdot\bR_i)^2 (\bOmega\cdot\bR_j) \bansatz}}{E^1_k}.
\]
And by
\[
F_i = \bE^1\cdot\bR_i = E^1_1 R_{1i} + E^1_2 R_{2i} + E^1_3 R_{3i},
\]
we get $\pd{F_i}{E^1_k} = \delta_{ik}$, which is used below in
computing $\pd{E^3_{iij}}{E^1_k}$.

If $i = j$ and $k \not= i$,
\[
\pd{\Vint{(\bOmega\cdot\bR_i)^3 \bansatz}}{E^1_k} 
=  F_i \dfrac{\partial}{\partial E^1_k}\left( \dfrac{ \sigma_i^2 
    + 2 F_i^2 - 3 w_i \sigma_i }
  {2 F_i^2 - w_i \sigma_i - w_i^2}\right) 
= 0.
\]
And if $i \not= j$ and $k \not= j$,
\[
\pd{\Vint{(\bOmega\cdot\bR_i)^2 (\bOmega\cdot\bR_j)
    \bansatz}}{E^1_k} 
=  \dfrac{ F_j }{2}\dfrac{\partial}{\partial E^1_k} 
\left(1-\dfrac{ \sigma_j^2 
    + 2 F_j^2 - 3 w_j \sigma_j }
  { 2 F_j^2 - w_j \sigma_j - w_j^2 }
\right)
=  0.
\]
Therefore, the non-zero entries in the Jacobian matrix can be
$\pd{E^3_{iij}}{E^1_j}$ only. By rotational invariance of the model,
we need only study the Jacobian matrix in the $x$-direction,
$\pd{\bff_x}{\bE}$, which is
\begin{equation}\label{eq:Jacobi-of-ApproxM2-sx}
  \renewcommand\arraystretch{2.}
  \boldsymbol{\mathrm{J}}_x = 
  \left(
    \begin{array}{ccccccccc}
      0 & 1 & 0 & 0 & 0 & 0 & 0 & 0 & 0 \\
      0 & 0 & 0 & 0 & 1 & 0 & 0 & 0 & 0 \\
      0 & 0 & 0 & 0 & 0 & 1 & 0 & 0 & 0 \\
      0 & 0 & 0 & 0 & 0 & 0 & 1 & 0 & 0 \\
      0 & \pd{E^3_{111}}{E^1_1} & 0 & 
                                      0 & 0 & 0 & 0 & 0 & 0\\
      0 & 0 & \pd{E^3_{112}}{E^1_2} & 
                                      0 & 0 & 0 & 0 & 0 & 0\\
      0 & 0 & 0 & 
                  \pd{E^3_{113}}{E^1_3} & 0 & 0 & 0 & 0 & 0\\
      0 & \pd{E^3_{122}}{E^1_1} & 0 & 
                                      0 & 0 & 0 & 0 & 0 & 0\\
      0 & 0 & 0 & 0 & 0 & 0 & 0 & 0 & 0
    \end{array}
  \right).
\end{equation}
For the non-zero entries in $\boldsymbol{\mathrm{J}}_x$, we have the
following bounds:
\begin{lemma}\label{lem:E3_11k-bound}
  In the interior of the realizability domain $\cM$, if
  $\bE^1 = \boldsymbol{0}$, we have
  \begin{enumerate}
    \item $0 < \pd{E^3_{11k}}{E^1_k} < \dfrac12$, for $k = 2,3$;
    \item $0 < \pd{E^3_{111}}{E^1_1} \leq 1$ if and only if $\sigma_1 > 0$.
  \end{enumerate}
\end{lemma}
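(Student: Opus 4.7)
My plan is to differentiate the closure formulas from Lemma \ref{lem:E3} directly, exploiting the fact that every appearance of $F_k$ in the building blocks enters quadratically. Since we work in the frame where each $\bR_i$ is aligned with the $i$-th coordinate axis, the relation $F_i = \bE^1 \cdot \bR_i$ yields $\partial F_i/\partial E^1_k = \delta_{ik}$, so $\partial/\partial E^1_k$ acts as $\partial/\partial F_k$ on the parameters.

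The key preliminary step is showing that $\partial \sigma_i/\partial F_k$ and $\partial w_i/\partial F_k$ all vanish at $\bE^1 = \boldsymbol{0}$. By \eqref{eq:sigma-formula} and \eqref{eq:w-formula}, the $F$-dependence enters only through the function $g(x,y;F_x,F_y)$, which is built from $q$ and $r$ via \eqref{eq:h-factor} and \eqref{eq:h-formula}; since both $q$ and $r$ are polynomials in $F_x^2$ and $F_y^2$, every partial with respect to $F_x$ carries a factor of $F_x$ and vanishes at $F_x = 0$. Once this is in hand, differentiating the explicit expressions in Lemma \ref{lem:E3} and evaluating at $F_k = 0$ leaves only the leading term where the explicit $F$-factor is differentiated, producing the closed-form identities
\begin{equation*}
\pd{E^3_{111}}{E^1_1}\bigg|_{\bE^1 = \boldsymbol{0}} = \dfrac{\sigma_1(3w_1 - \sigma_1)}{w_1(w_1 + \sigma_1)}, \qquad
\pd{E^3_{11k}}{E^1_k}\bigg|_{\bE^1 = \boldsymbol{0}} = \dfrac{(w_k - \sigma_k)^2}{2 w_k (w_k + \sigma_k)}, \quad k = 2, 3.
\end{equation*}

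From here everything is algebraic. For part 2, combining the first identity with $w_1 > 0$, $w_1 + \sigma_1 > 0$ (Lemma \ref{lem:fact-bound-sigma-w}), and $\sigma_1 \leq w_1$ (so $3w_1 - \sigma_1 > 0$) shows that the derivative is nonnegative and is positive if and only if $\sigma_1 > 0$; the upper bound $\leq 1$ follows from the identity $1 - \pd{E^3_{111}}{E^1_1} = (w_1 - \sigma_1)^2/[w_1(w_1 + \sigma_1)] \geq 0$. For part 1, the second identity is manifestly nonnegative, and strictly positive since $w_k - \sigma_k = 2 g(\lambda_i, \lambda_j; 0, 0) = 4 \lambda_i \lambda_j /[3(\lambda_i + \lambda_j)] > 0$ in the interior where all $\lambda_i > 0$; the strict upper bound $< 1/2$ reduces after cross-multiplication to $\sigma_k(\sigma_k - 3w_k) < 0$, which holds from $\sigma_k > 0$ and $\sigma_k \leq w_k < 3w_k$.

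The main obstacle is the preliminary observation that the auxiliary partials of $\sigma_i$ and $w_i$ with respect to $F_k$ vanish at $\bE^1 = \boldsymbol{0}$. This is a structural feature of $g$ rather than a computation, but it is essential, since it collapses an otherwise messy chain-rule expansion to the two clean rational identities displayed above, after which Lemma \ref{lem:fact-bound-sigma-w} and the inequality $\sigma_k \leq w_k$ close the argument.
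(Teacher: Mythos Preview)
Your approach is essentially the paper's: differentiate the closure formulas of Lemma~\ref{lem:E3}, evaluate at $\bE^1=\boldsymbol{0}$, obtain the same two rational identities, and then bound them using Lemma~\ref{lem:fact-bound-sigma-w} together with $\sigma_k\le w_k$. Your explicit remark that $\partial\sigma_i/\partial F_k$ and $\partial w_i/\partial F_k$ vanish at $\bE^1=\boldsymbol{0}$ (because $g$ depends on the $F$'s only through their squares) is a point the paper leaves implicit; in fact the overall prefactor $F_l$ in each closure entry already kills the extra chain-rule terms at $\bE^1=\boldsymbol{0}$, so this observation, while correct, is more than is strictly needed.

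There is one genuine gap. For the upper bound in item~1 you reduce $\pd{E^3_{11k}}{E^1_k}<\tfrac12$ to $\sigma_k(\sigma_k-3w_k)<0$ and then assert $\sigma_k>0$. That positivity is \emph{not} guaranteed throughout the interior of $\cM$: by the formula in the proof of Theorem~\ref{thm:condition-nonnegative-ansatz}, $\sigma_k>0$ is equivalent to $3\lambda_k^2+\lambda_k(\lambda_i+\lambda_j)-\lambda_i\lambda_j>0$, which fails when $\lambda_k$ is small relative to the other two eigenvalues. When $\sigma_k\le 0$ one actually gets $\pd{E^3_{11k}}{E^1_k}\ge\tfrac12$, so the strict upper bound cannot hold on all of the interior. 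The paper's own argument shares this defect---it passes directly from $\sigma_2\le w_2$ to the strict bound without securing $\sigma_2>0$---and since only the lower bound $>0$ is used downstream in Theorem~\ref{lem:Jx-invertible}, the issue is harmless for the rest of the development.
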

\begin{proof}
  For the first item, we only need to verify for $k=2$. By Lemma \ref{lem:E3},
  one has
  \[
  \begin{split}
    \pd{E^3_{112}}{E^1_2} = & \pd{\Vint{(\bOmega\cdot\bR_1)^2
        (\bOmega\cdot\bR_2) \bansatz}}{E^1_2} \\ = & \dfrac12\left(1 -
      \dfrac{\sigma_2^2 + 2 F_2^2 - 3 w_2 \sigma_2}{2 F_2^2 - w_2
        \sigma_2 -w_2^2}\right) \\
    = & \dfrac12 \dfrac{(w_2 - \sigma_2)^2}{w_2 (\sigma_2 + w_2)}.
  \end{split}
  \]
  By Lemma \ref{lem:fact-bound-sigma-w} we have $w_2 > 0$ and 
  $\sigma_2 + w_2 > 0$, thus, $\pd{E^3_{112}}{E^1_2} > 0$. 
  In addition, from the proof of Theorem \ref{thm:condition-nonnegative-ansatz},
  we have $\sigma_2 \leq w_2$, therefore $\pd{E^3_{112}}{E^1_2} < \dfrac12$.

  For the second item, we have by Lemma \ref{lem:E3},
 \[
 \begin{aligned}
   \pd{E^3_{111}}{E^1_1} = & \pd{\Vint{(\bOmega\cdot\bR_1)^3
       \bansatz}}{E^1_1} 
   = \dfrac{\sigma_1^2 + 2 F_1^2 - 3 w_1 \sigma_1}{2 F_1^2 - w_1
     \sigma_1 -w_1^2} \\
   = &\dfrac{\sigma_1 (3 w_1 - \sigma_1)}{w_1 (\sigma_1 + w_1)}
   = 1 - \dfrac{(w_1 - \sigma_1)^2}{w_1 (\sigma_1 + w_1)} \leq 1,
 \end{aligned}
 \]
 And $\pd{E^3_{111}}{E^1_1} > 0$ is equivalent to
 $\sigma_1 (3 w_1 - \sigma_1) > 0$. As $\sigma_1 \leq w_1$,
 we have $3 w_1 - \sigma_1 \geq 2 w_1 > 0$, implying that
 $\pd{E^3_{111}}{E^1_1} > 0$ is equivalent to $\sigma_1 > 0$.
\end{proof}
We now give the condition for the real diagonalizability of the Jacobian
matrix $\boldsymbol{\mathrm{J}}_x$ as follows:
\begin{theorem}\label{lem:Jx-invertible}
  The Jacobian matrix $\boldsymbol{\mathrm{J}}_x$ defined in
  \eqref{eq:Jacobi-of-ApproxM2-sx} is real diagonalizable
  if and only if $\sigma_1 > 0$.
\end{theorem}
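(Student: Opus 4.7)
The plan is to leverage the extreme sparsity of $\boldsymbol{\mathrm{J}}_x$ to reduce the $9\times 9$ spectral problem to a short list of small invariant blocks that can be handled by inspection. Write $a=\partial E^3_{111}/\partial E^1_1$, $b=\partial E^3_{112}/\partial E^1_2$, $c=\partial E^3_{113}/\partial E^1_3$, and $d=\partial E^3_{122}/\partial E^1_1$ for the four nontrivial Jacobian entries, and let $e_1,\dots,e_9$ denote the standard basis of $\mathbb{R}^9$. A column-by-column check of \eqref{eq:Jacobi-of-ApproxM2-sx} shows that $V_1=\mathrm{span}(e_1,e_2,e_5,e_8)$, $V_2=\mathrm{span}(e_3,e_6)$, $V_3=\mathrm{span}(e_4,e_7)$, and $V_4=\mathrm{span}(e_9)$ are all $\boldsymbol{\mathrm{J}}_x$-invariant and give a direct sum decomposition of $\mathbb{R}^9$, so $\boldsymbol{\mathrm{J}}_x$ is real diagonalizable exactly when each of its four restrictions is.

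The restrictions to $V_2$ and $V_3$ are $\bigl(\begin{smallmatrix}0&1\\b&0\end{smallmatrix}\bigr)$ and $\bigl(\begin{smallmatrix}0&1\\c&0\end{smallmatrix}\bigr)$; by Lemma \ref{lem:E3_11k-bound} we have $b,c\in(0,\tfrac12)$, so both blocks have two distinct real eigenvalues $\pm\sqrt{b}$ and $\pm\sqrt{c}$ and are unconditionally real diagonalizable. The restriction to $V_4$ is the trivial $1\times 1$ zero block. The whole question therefore reduces to the $4\times 4$ block
\[
A=\begin{pmatrix}0&1&0&0\\ 0&0&1&0\\ 0&a&0&0\\ 0&d&0&0\end{pmatrix}
\]
obtained as the restriction to $V_1$. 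Expanding $\det(\lambda I-A)$ successively along its first and last columns (each of which carries only a $\lambda$ on the diagonal of $\lambda I-A$) yields the characteristic polynomial $p_A(\lambda)=\lambda^2(\lambda^2-a)$.

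To relate $a$ to $\sigma_1$ I would invoke the identity $a=\sigma_1(3w_1-\sigma_1)/\bigl(w_1(\sigma_1+w_1)\bigr)$ derived in the proof of Lemma \ref{lem:E3_11k-bound}. Combined with $w_1>0$ and $\sigma_1+w_1>0$ from Lemma \ref{lem:fact-bound-sigma-w} and with $\sigma_1\le w_1$ established in the proof of Theorem \ref{thm:condition-nonnegative-ansatz}, this forces $\mathrm{sgn}(a)=\mathrm{sgn}(\sigma_1)$. Three cases then finish the argument. If $\sigma_1>0$, the eigenvalues of $A$ are the simple values $\pm\sqrt{a}$ together with $0$ of algebraic multiplicity two; reading off $Av=0$ in the $V_1$-basis shows $\ker A$ is spanned by $(1,0,0,0)^T$ and $(0,0,0,1)^T$, which matches the algebraic multiplicity, and $A$ is real diagonalizable. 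If $\sigma_1<0$, then $a<0$ and the roots $\pm\sqrt{a}$ are non-real, ruling out real diagonalizability.

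The borderline case $\sigma_1=0$ is the main obstacle: now $p_A(\lambda)=\lambda^4$ but the kernel computation is unchanged, so the geometric multiplicity of $0$ remains $2$, strictly below its algebraic multiplicity $4$. I would rule out diagonalizability by exhibiting an explicit Jordan chain of length two: the second basis vector of $V_1$ satisfies $A(0,1,0,0)^T=(1,0,0,d)^T\ne 0$, while a further application gives $A(1,0,0,d)^T=0$, so $(0,1,0,0)^T$ is a generalized eigenvector of order two and $A$ has a nontrivial Jordan block at $0$. Combining the three cases with the analyses of the other three blocks yields the stated iff characterization.
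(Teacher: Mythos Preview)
Your argument is correct and follows essentially the same route as the paper: exploit the sparsity of $\boldsymbol{\mathrm{J}}_x$, compute the characteristic polynomial, and tie the sign of $a=\partial E^3_{111}/\partial E^1_1$ to $\sigma_1$ via Lemma~\ref{lem:E3_11k-bound}. The organizational device differs slightly: the paper writes down the full characteristic polynomial $\lambda^3(\lambda^2-a)(\lambda^2-b)(\lambda^2-c)$ at once, then exhibits all nine eigenvectors explicitly and asserts that diagonalizability fails when any of $a,b,c$ vanishes; you instead split $\mathbb{R}^9$ into four $\boldsymbol{\mathrm{J}}_x$-invariant subspaces and treat each block separately. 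Your decomposition buys a tidier failure analysis---the Jordan-chain argument for $\sigma_1=0$ and the observation that $\sigma_1<0$ forces non-real eigenvalues are both more explicit than the paper's ``could be verified directly''---while the paper's direct listing of eigenvectors is marginally quicker when everything is positive. Neither approach needs anything the other does not.
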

\begin{proof}
The characteristic polynomial of $\boldsymbol{\mathrm{J}}_x$ is
\begin{equation}
  \left| \lambda \bI - \boldsymbol{\mathrm{J}}_x \right| = \lambda^3 \left(\lambda^2-\pd{E^3_{111}}{E^1_1}\right) \left(\lambda^2-
    \pd{E^3_{112}}{E^1_2}\right) \left(\lambda^2-\pd{E^3_{113}}{E^1_3}
  \right),
\end{equation}
thus zero is a multiple eigenvalue of $\boldsymbol{\mathrm{J}}_x$. The
corresponding eigenvectors are
\[
(0,0,0,0,0,0,0,0,1)^T,\quad
(0,0,0,0,0,0,0,1,0)^T,\quad
(1,0,0,0,0,0,0,0,0)^T.
\]
In the case that
\[
\pd{E^3_{111}}{E^1_1} \geq 0, \quad 
\pd{E^3_{112}}{E^1_2} \geq 0, \quad
\pd{E^3_{113}}{E^1_3} \geq 0,
\]
the corresponding eigenvalues of the matrix are
\[
\lambda_1^{\pm} = \pm \sqrt{\pd{E^3_{111}}{E^1_1}}, \quad
\lambda_2^{\pm} = \pm \sqrt{\pd{E^3_{112}}{E^1_2}}, \quad
\lambda_3^{\pm} = \pm \sqrt{\pd{E^3_{113}}{E^1_3}}, \quad
\]
and the corresponding eigenvectors are
\[
  \begin{array}{cccccc}
    \left(\begin{array}{c}
        1 \\
        \lambda_1^{\pm} \\
        0 \\
        0 \\
        |\lambda_1^{\pm}|^2 \\
        0 \\
        0 \\
        \pd{E^3_{122}}{E^1_1} \\
        0
    \end{array} \right), \qquad
    \left(\begin{array}{c}
        0 \\
        0 \\
        -1 \\
        0 \\
        0 \\
        \lambda_2^{\pm} \\
        0 \\
        0 \\
        0
    \end{array}\right), \qquad
    \left(\begin{array}{c}
        0 \\
        0 \\
        0 \\
        -1 \\
        0 \\
        0 \\
        \lambda_3^{\pm} \\
        0 \\
        0
    \end{array}\right).
  \end{array}
\]
It could be verified directly that if any of the eigenvalues
$\lambda_i^{\pm}$, $i = 1,2,3$, equals zero, the Jacobian matrix is
not real diagonalizable. If we have
\begin{equation}\label{eq:cond-jacobi}
\pd{E^3_{111}}{E^1_1} > 0, \quad 
\pd{E^3_{112}}{E^1_2} > 0, \quad
\pd{E^3_{113}}{E^1_3} > 0,
\end{equation}
by the linear independence of the eigenvectors, one concludes that the
Jacobian matrix is real diagonalizable. Then the proof is finished by Lemma
\ref{lem:E3_11k-bound}.
\end{proof}
As a direct consequence of Theorem \ref{lem:Jx-invertible}, 
the 3D $B_2$ model is hyperbolic at equilibrium. This can be proved by the following
arguments. Let $\bR_j$, $j = 1,2,3$ be the three eigenvectors of $\bbrE^2$. Denote
the $k$-th component of the vector $\bR_j$ to be $R_{kj}$. Define the
Jacobian matrix of the 3D $B_2$ model \eqref{eq:moment_model} along $\bR_j$, $j = 1,2,3$
to be $\sum\limits_{k = 1}^3 R_{kj} \bJ_k$. Theorem \ref{lem:Jx-invertible}
shows that for the cases $\bE^1 = \boldsymbol{0}$, condition \eqref{eq:cond-sigma-positive} 
is the necessary and sufficient condition for the Jacobian matrix along $\bR_j$,
$\forall j = 1,2,3$ to be real diagonalizable. The above result holds
because for any given $\bR_j$, $j = 1,2,3$, we could always rotate the
coordinate system, such that $\bR_j$ is aligned with the
$x$-axis. Theorem \ref{thm:condition-nonnegative-ansatz}
gives \eqref{eq:cond-sigma-positive-1}
as the necessary and sufficient condition for $\sigma_1 > 0$, and rotation of coordinates can permute
the indices in \eqref{eq:cond-sigma-positive-1}, which results in \eqref{eq:cond-sigma-positive}.
Notice that at equilibrium, $\bbrE^2$ is a scalar matrix, so any direction is an eigenvector of
$\bbrE^2$. Therefore, the 3D $B_2$ model is hyperbolic at equilibrium. 

For given moments, we could always choose a
coordinate system such that $\bbrE^2$ is a diagonal matrix. The
system is hyperbolic if and only if for an arbitrary
$\bn \not= \boldsymbol{0}$, we always have
$n_x \bJ_x + n_y \bJ_y + n_z \bJ_z$ to be real diagonalizable.
For $\bE^1 = \boldsymbol{0}$, we sample for all possible
$(\lambda_1, \lambda_2, \lambda_3)$ and all unit vectors $\bn$, to
check if the matrix is real diagonalizable.
There is a hyperbolicity region around equilibrium for $\bE^1 =
\boldsymbol{0}$ as in Figure \ref{fig:hyperbolic_region}. 
The hyperbolicity region is a smaller
region than that enclosed by \eqref{eq:cond-sigma-positive}. However, it does cover a
neighborhood of the equilibrium.
\begin{figure}  
  \centering
  \includegraphics[width=0.48\textwidth]{./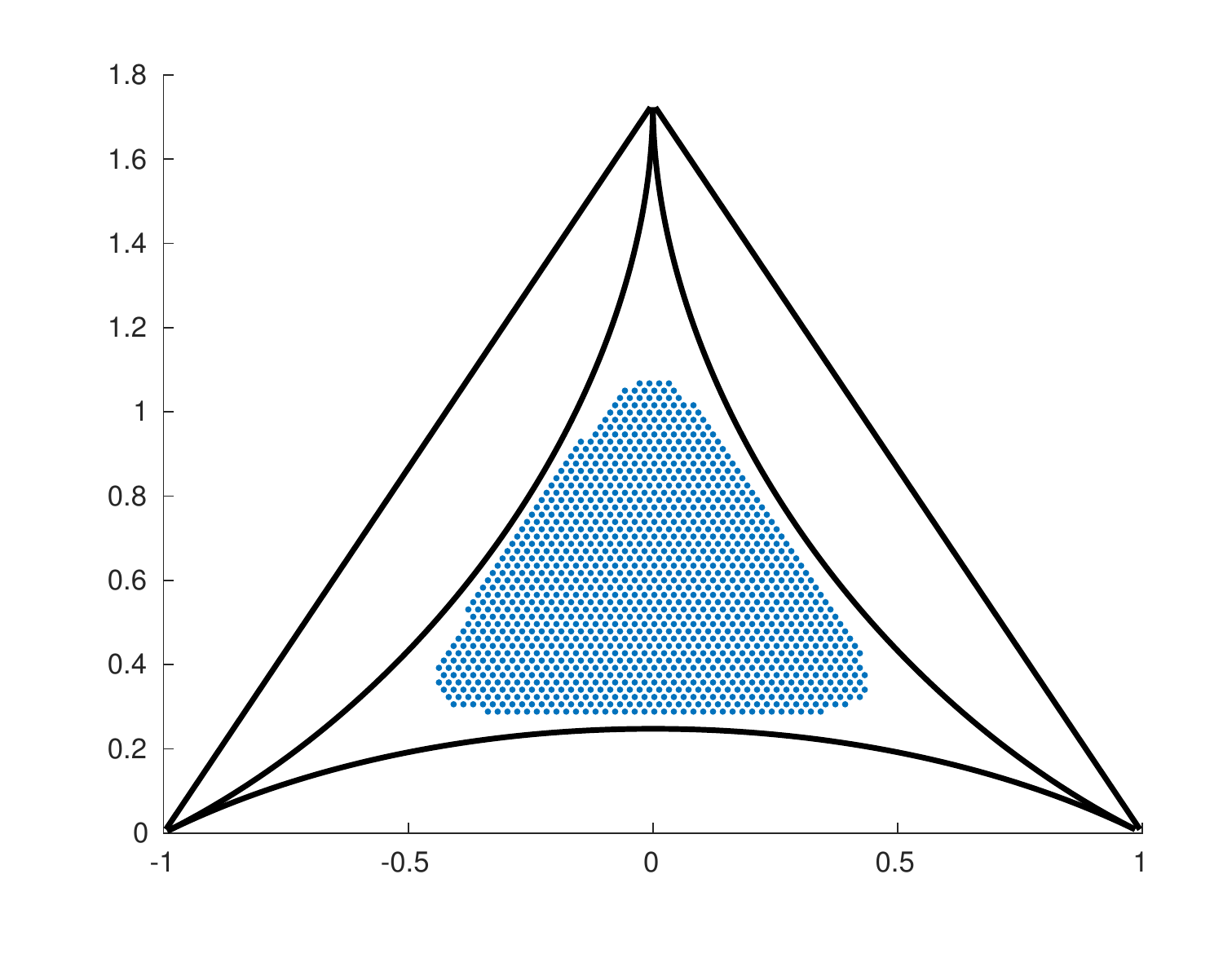}
  \caption{Region of hyperbolicity when $\bE^1 = \boldsymbol{0}$.
    $\left(\dfrac{\lambda_1}{E^0},\dfrac{\lambda_2}{E^0},
    \dfrac{\lambda_3}{E^0}\right)$ are taken as barycentric coordinates within the 
    triangle. The outer triangle is the realizability domain. The curves
    correspond to the outer boundary of the constraints \eqref{eq:cond-sigma-positive}.
    The 3D $B_2$ model is found to be hyperbolic within the dotted blue region. 
  }
  \label{fig:hyperbolic_region}
\end{figure}

Finally, we point out that although the 3D $B_2$ model is aimed at 
approximating the $M_2$ model, there is an interesting difference 
between them. This difference arises from the fact that
the ansatz is assumed to be the form $\bansatz$ in \eqref{eq:B2-ansatz}, 
and is independent of choice for the function $f(\mu; \gamma, \delta)$. 
When the given moments satisfy
\begin{equation} \label{eq:moments-with-axisymmetric-M2-ansatz}
  \exists i\not=j, \text{ such that } \lambda_i = \lambda_j,
  ~~\text{and}~~F_i = F_j = 0,
\end{equation}
the corresponding ansatz in the $M_2$ model is an axisymmetric
function. This includes the equilibrium distribution. Exactly at the
equilibrium, the 3D $B_2$ ansatz $\bansatz$ is isotropic, and, thus,
axisymmetric. However, even in neighbourhoods of the
equilibrium, moments corresponding to an axisymmetric ansatz in the
$M_2$ model would usually not reproduce an axisymmetric ansatz for the 3D
$B_2$ model. In other words, for arbitrary $\epsilon > 0$, there exist 
moments in the set
\[
\mathcal{A}_\epsilon = \left\{ (E^0,\bE^1,\bbrE^2) \in \cM ~\Big|~
  \bE^1 = \boldsymbol{0}, \sum_{i=1}^3 \left| \lambda_i -
    \dfrac{E^0}{3} \right|^2 < \epsilon, \text{ and
    \eqref{eq:moments-with-axisymmetric-M2-ansatz} is valid} \right\},
\]
for which the 3D $B_2$ ansatz $\bansatz$ is not axisymmetric; otherwise, 
the closure relation may lose the necessary regularities. More precisely, 
we claim:
\begin{theorem} 
  There are no functions
  $w_i(\lambda_1, \lambda_2, \lambda_3; F_1, F_2, F_3)$, $i=1,2,3$, in
  the 3D $B_2$ ansatz $\bansatz$ satisfying both items below:
  \begin{enumerate}
  \item $w_i$, $i=1,2,3$, are differentiable at the equilibrium state.
  \item The ansatz $\bansatz$ is axisymmetric for any moments in
    $\mathcal{A}_\epsilon$.
  \end{enumerate}
\end{theorem}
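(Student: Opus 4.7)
The plan is to prove the claim by contradiction. Suppose differentiable $w_1, w_2, w_3$ exist such that $\bansatz$ is axisymmetric throughout $\mathcal{A}_\epsilon$. I would first restrict to the one-dimensional slice $S = \mathcal{A}_\epsilon \cap \{\lambda_2 = \lambda_3\}$ (where automatically $F_i = 0$), along which the expected axis of symmetry is $\bR_1$. The rotational invariance of the closure (as used in Theorem \ref{lem:rotational_invariant}), applied to the permutation $2 \leftrightarrow 3$, forces $w_2 = w_3$, $\sigma_2 = \sigma_3$, and, since $F_2 = F_3 = 0$, $\gamma_2 = \gamma_3 = 1/2$ on $S$; the only remaining freedom is the common value $\delta_2 = \delta_3$.

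The core of the argument is a functional equation. Because $\gamma_2 = 1/2$, one may write $f(\mu; 1/2, \delta_2) = g(\mu^2)$ with $g(x) \propto \big((1-x)/4\big)^{1/(2\delta_2)-1}$. After subtracting off the manifestly axisymmetric term $w_1 f(\bOmega\cdot\bR_1; \gamma_1, \delta_1)$, axisymmetry of $\bansatz$ about $\bR_1$ reduces to $g(r^2\cos^2\phi) + g(r^2\sin^2\phi)$ being independent of $\phi$ for every $r \in [0,1)$. Differentiating in $\phi$ and letting $r$ vary forces $g' \equiv \text{const}$, so $g$ is affine, which within the beta family selects exactly $\xi=1$ or $\xi=2$, i.e., $\delta_2 \in \{1/2, 1/4\}$. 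At the equilibrium state $\delta_i = 1/2$, and since $w_2 - \sigma_2 = 2E^0/9 \neq 0$ there, the identity $\delta_2 = \sigma_2/(w_2-\sigma_2)$ from Lemma \ref{lem:gamma-delta} makes $\delta_2$ continuous on a neighborhood. Hence $\delta_2 \equiv 1/2$ on $S$ near equilibrium, i.e., $\sigma_2 = w_2/3$.

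Substituting $\sigma_2 = w_2/3$ into \eqref{eq:w-cond} with $\sigma_2 = \sigma_3$ and $\lambda_2 = \lambda_3$ produces the rigid identity $2\sigma_2 + \sigma_1 = \lambda_1$ along $S$ near equilibrium. For the contradiction I would Taylor-expand at equilibrium: writing $\lambda_i = E^0/3 + \epsilon_i$ with $\sum_i \epsilon_i = 0$, rotational invariance forces $\partial_{\lambda_j}\sigma_i|_{\mathrm{eq}} = \alpha$ if $i = j$ and $= \beta$ if $i \neq j$, so $\sigma_i = E^0/9 + C\epsilon_i + O(\|\epsilon\|^2)$ with $C := \alpha - \beta$. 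Parametrizing $S$ by $\epsilon_2 = \epsilon_3 = s$, $\epsilon_1 = -2s$, the first-order coefficient of $2\sigma_2 + \sigma_1$ in $s$ is $2C - 2C = 0$, while that of $\lambda_1$ is $-2$, so $2\sigma_2 + \sigma_1 - \lambda_1 = 2s + O(s^2)$, contradicting the identity on every neighborhood of $s = 0$.

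The principal obstacle I anticipate lies in the functional-equation step: cleanly establishing that within the beta family only the two isolated parameters $\delta \in \{1/2, 1/4\}$ permit axisymmetric sums, and then invoking differentiability of $w_i$ to rule out the value $\delta_2 = 1/4$. Once this rigid reduction to $\sigma_2 = w_2/3$ on $S$ is in hand, the Taylor contradiction is essentially forced, reflecting the mismatch between the permutation-symmetric first-order structure imposed on $\sigma_i$ by rotational invariance and the asymmetric linear growth of $\lambda_1$ along $S$.
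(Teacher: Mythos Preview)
Your overall strategy (contradiction via the constraint $\sigma_j = w_j/3$ on the slice where two eigenvalues coincide, then a first-order incompatibility at equilibrium) is the same as the paper's, and your functional-equation step is in fact more careful than the paper's: you correctly notice that within the symmetric beta family $g(x)\propto((1-x)/4)^{1/(2\delta)-1}$ there is a second axisymmetric possibility $\delta=1/4$ (where $f(\mu)\propto 1-\mu^2$), and you rule it out by continuity of $\delta_2$ near equilibrium. The paper simply asserts that the non-$\bR_1$ contributions must be constant, which skips this case.

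The gap is in your final contradiction. You invoke ``rotational invariance of the closure (as in Theorem~\ref{lem:rotational_invariant})'' to force $w_2=w_3$ on $S$ and, crucially, to force the permutation-symmetric derivative structure $\partial_{\lambda_j}\sigma_i|_{\mathrm{eq}}=\alpha\delta_{ij}+\beta(1-\delta_{ij})$. But the theorem hypothesizes \emph{arbitrary} differentiable $w_i(\lambda_1,\lambda_2,\lambda_3;F_1,F_2,F_3)$; Theorem~\ref{lem:rotational_invariant} was proved only for the paper's specific choice and is not available here. Without permutation symmetry your single-slice Taylor computation gives no contradiction: there is no reason $2\sigma_2+\sigma_1$ should have vanishing first-order variation along $S$.

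The paper circumvents this by using all three slices of $\mathcal{A}_\epsilon$ simultaneously. Your functional-equation argument, repeated on each slice, yields $\sigma:=\sigma_1+\sigma_2+\sigma_3=\lambda_j$ on $\{\lambda_k=\lambda_l\}$ (for $\{j,k,l\}=\{1,2,3\}$). Differentiability of $\sigma$ at equilibrium then gives $\nabla\sigma\cdot\bn_j=1$ for each of the three tangent directions $\bn_1=(1,-\tfrac12,-\tfrac12)$, $\bn_2=(-\tfrac12,1,-\tfrac12)$, $\bn_3=(-\tfrac12,-\tfrac12,1)$, so their sum is $3$; but $\bn_1+\bn_2+\bn_3=0$ forces the sum to be $0$. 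This replaces your Taylor step without any symmetry assumption, and is the fix your argument needs.
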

\begin{proof}
  We prove by contradiction. Suppose that \eqref{eq:B2-ansatz} is an
  axisymmetric distribution. Without losing generality we assume the
  corresponding moments satisfy $\lambda_2 = \lambda_3$, therefore the
  symmetric axis is aligned to $\bR_1$, and $F_2 = F_3 = 0$. To get
  axisymmetry in \eqref{eq:B2-ansatz}, the contributions from
  $w_2 f(\bOmega\cdot\bR_2;\gamma_2,\delta_2)$ and
  $w_3 f(\bOmega\cdot\bR_3;\gamma_3,\delta_3)$ have to be either zero
  or constant functions, hence $\sigma_2 = \dfrac{w_2}{3}$ and
  $\sigma_3 = \dfrac{w_3}{3}$, giving
  \[
  \sigma_1 + \sigma_2 + \sigma_3 = \lambda_1.
  \]
  Similar relations could be obtained when the symmetric 
  axis is aligned to $\bR_2$ or $\bR_3$.
  Consider the case when $\bE^1 = \boldsymbol{0}$. 
  Let 
  \[
  \sigma(\lambda_1,\lambda_2,\lambda_3) = 
  \sigma_1(\lambda_1,\lambda_2,\lambda_3;0,0,0)
  + \sigma_2(\lambda_1,\lambda_2,\lambda_3;0,0,0)
  + \sigma_3(\lambda_1,\lambda_2,\lambda_3;0,0,0).
  \]
  Based on the above arguments, we have
  \begin{equation}\label{eq:h-along-axis}
    \renewcommand\arraystretch{1.5}
    \sigma(\lambda_1,\lambda_2,\lambda_3) = \left \{
      \begin{array}{l}
        \lambda_1,\quad\text{if}\quad \lambda_2 = \lambda_3 
        = \frac12(E^0-\lambda_1),\\
        \lambda_2,\quad\text{if}\quad \lambda_1 = \lambda_3
        = \frac12(E^0-\lambda_2),\\
        \lambda_3,\quad\text{if}\quad \lambda_1 = \lambda_2
        = \frac12(E^0-\lambda_3).
      \end{array}\right.
  \end{equation}
  If all $w_i$, $i=1,2,3$, are differentiable, then all
  $\sigma_i$, $i=1,2,3$, are differentiable, so $\nabla \sigma$
  should be a continuous function for all realizabile
  moments. Let
  \[
  \bn_1 = (1, -\frac12, -\frac12),\quad
  \bn_2 = (-\frac12, 1, -\frac12),\quad
  \bn_3 = (-\frac12, -\frac12, 1),
  \]
  then $\nabla \sigma \cdot(\bn_1 + \bn_2 + \bn_3) = 0$. On the other
  hand, $\nabla \sigma \cdot\bn_1$ is equivalent to taking the
  derivative of $\sigma$ along
  $\lambda_2 = \lambda_3 = \frac12(E^0-\lambda_1)$, and we have
  similar relationships for $\bn_2$ and $\bn_3$.  So evaluating
  $\nabla \sigma \cdot\bn_j$ at $\lambda_j = \dfrac{E^0}{3}$,
  $j=1,2,3$ and $\bE^1 = \boldsymbol{0}$ according to
  \eqref{eq:h-along-axis} gives
  $\nabla \sigma \cdot(\bn_1 + \bn_2 + \bn_3) = 3$, leading to a
  contradiction. Therefore the two items can not be satisfied
  simultaneously.
\end{proof}
Notice that the proof of this lemma does not make use of the
specific form of the function $f$ in \eqref{eq:B2-ansatz}. In fact,
it can be seen from the proof that this inconsistency is due to the 
fact that the ansatz is a linear combination of three axisymmetric 
distributions. However, although the new model does not reproduce an 
axisymmetric ansatz for moments corresponding
to an axisymmetric ansatz in the $M_2$ model,
in such cases the closure of the new model retain the same structure 
as the $M_2$ closure. Without loss of generality consider the case when
$\lambda_2 = \lambda_3 = 0$ and $F_2 = F_3 = 0$. From
\eqref{eq:B2-closure}, we have
\begin{equation}\label{eq:axisymmetric-structure}
  \begin{split}
    & \Vint{ (\bOmega\cdot\bR_1)^2 (\bOmega\cdot\bR_2) \bansatz } =
    \Vint{ (\bOmega\cdot\bR_1)^2 (\bOmega\cdot\bR_3) \bansatz } =
    0,\\
    & \Vint{ (\bOmega\cdot\bR_1) (\bOmega\cdot\bR_2)^2 \bansatz } =
    \Vint{ (\bOmega\cdot\bR_3) (\bOmega\cdot\bR_2)^2 \bansatz }
    = \dfrac12\left(F_1-\Vint{ (\bOmega\cdot\bR_1)^3 }\right),
  \end{split}
\end{equation}
satisfying the same equalities as that given by an $M_2$ ansatz with
$\bR_1$ as the symmetric axis. 
 
Define
\[
  E_1 = \dfrac{\|\bE^1\|}{E^0},\quad
  E_2 = \dfrac{1}{(E^0)^3} (\bE^1)^T \bbrE^2 \bE^1, \quad
  E_3 = \dfrac{1}{(E^0)^4} \Vint{ \left(\bOmega\cdot\bE^1\right)^3 
  \hat{I}}.
\]
Then $E_i$, $i = 1,2,3$ would be the scaled first, second and
third-order moments for slab geometry cases. We compare the contour of
$E_3$ between the 3D $B_2$ model and the $M_2$ model for slab
geometry\footnote{The figure for the slab geometry was reproduced
  based on the data used to plot the corresponding figure in
  \cite{alldredge2016approximating}, and the computation was carried
  out by Dr. Alldredge using his own code during our collaboration
  therein.} in Figure \ref{fig:compare-contour}.  It is shown in
Figure \ref{fig:compare-contour} that the 3D $B_2$ model provides
realizable closure which is qualitatively similar to that of $M_2$
closure for most of realizable moments.
\begin{figure}
  \subfigure[The value of $E_3$  in slab geometry for 
  normalized realizable moments using
  the 3D $B_2$ closure.]{
    \label{fig:imageB2} 
    \includegraphics[width=0.48\textwidth]{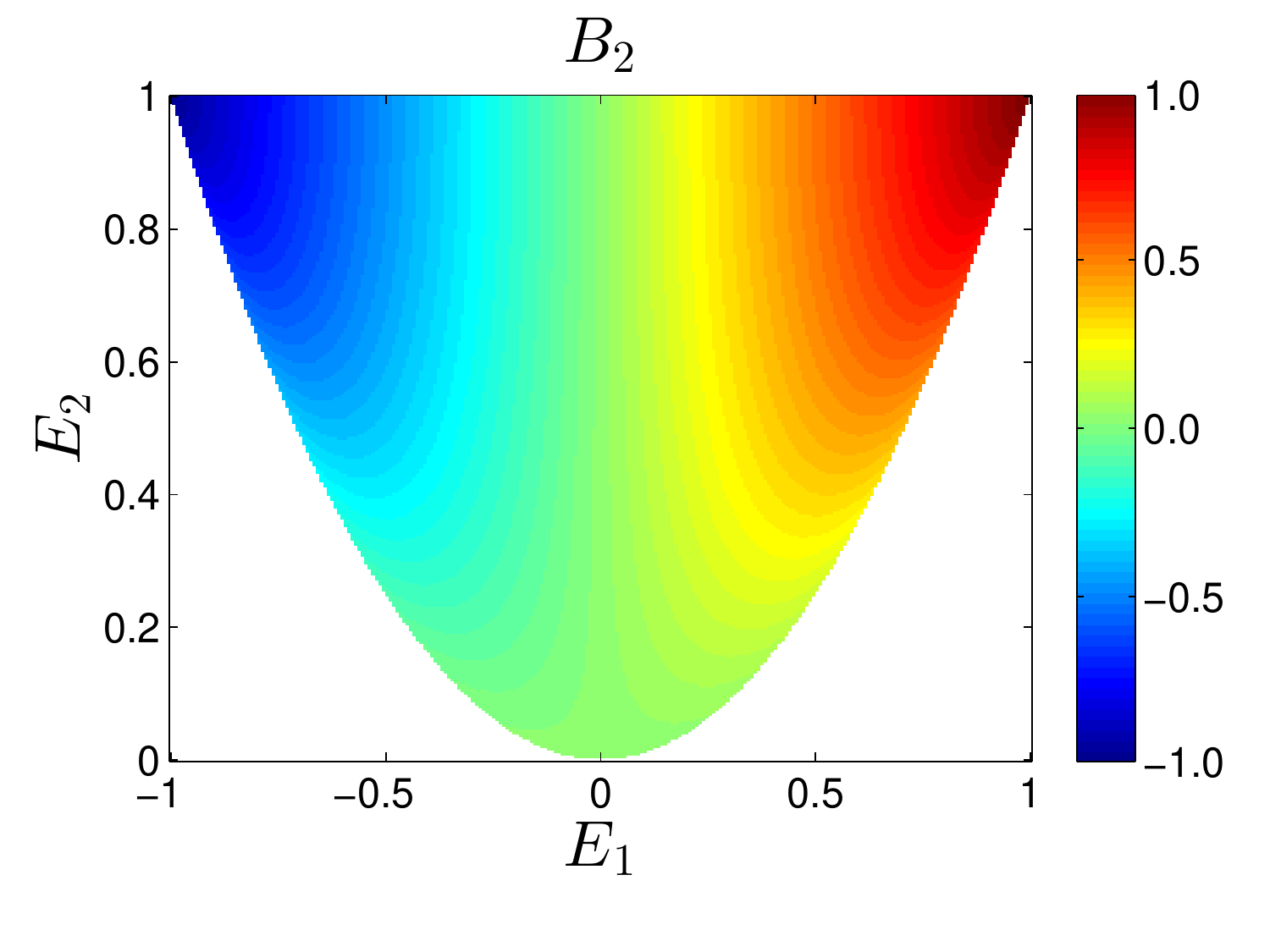}} \hfill
  \subfigure[The value of $E_3$ for normalized realizable moments
  using the maximum entropy closure in slab geometry for the
  monochromatic case.]{
    \label{fig:imageM2mono} 
    \includegraphics[width=0.48\textwidth]{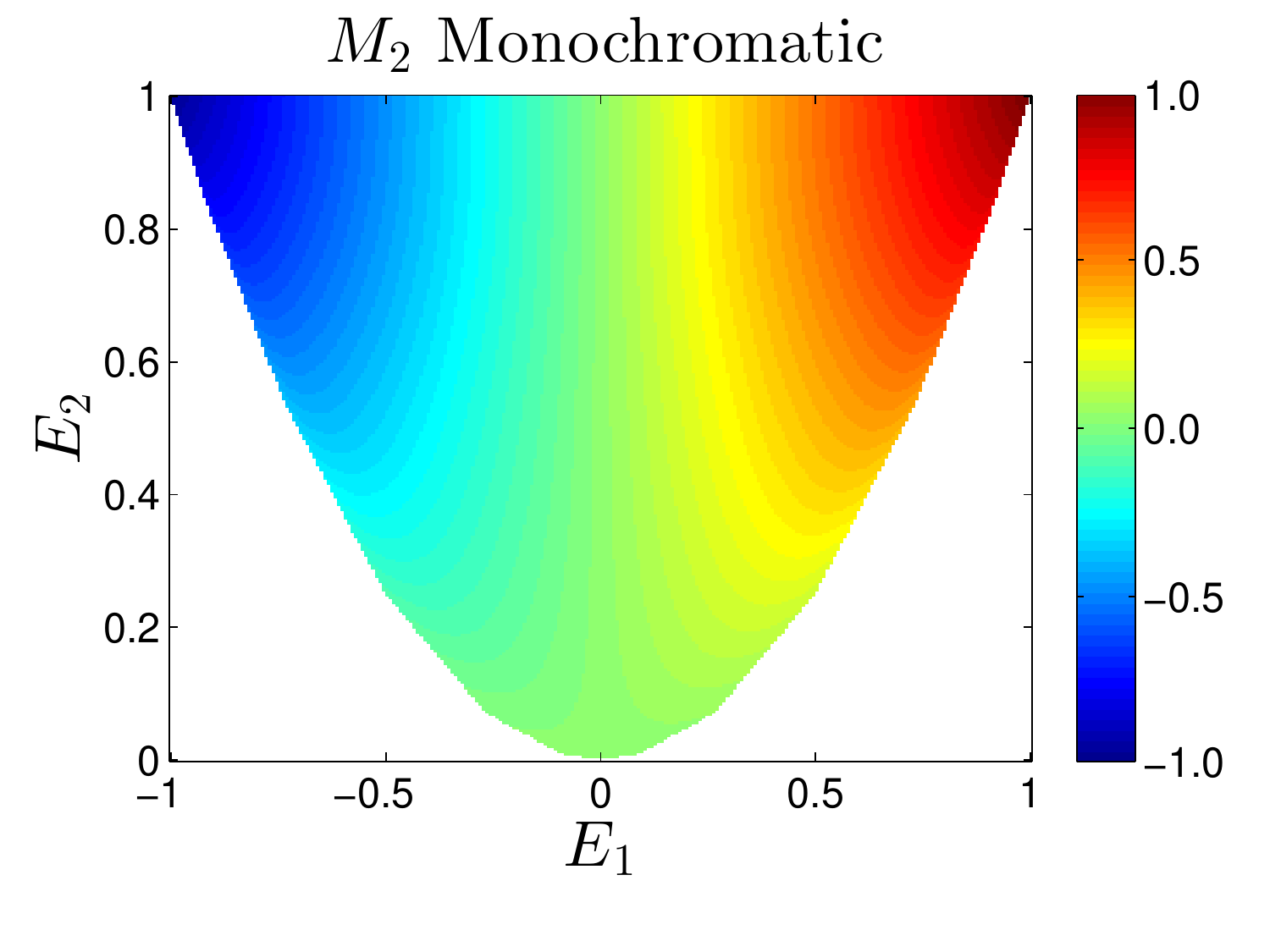}}%E3M2}} 
  \hfill
  \caption{Comparing the value of the closing moment $E_3$ for $M_2$
    for the monochromatic case and for the 3D $B_2$ model in slab
    geometry.}
  \label{fig:compare-contour}
\end{figure}

%%% Local Variables: 
%%% mode: latex
%%% TeX-master: "AppM2_MultiD.tex"
%%% End: 

%!TEX root = AppM2_MultiD.tex

\section{Conclusion and Future Work}\label{sec:conclude}

We proposed a 3D $B_2$ model that is an extension of the EQMOM ($B_2$
model) in 1D. We showed, step by step, how the structure of the new
model is gradually refined. Particularly, we studied the main
properties of this new model, including rotational invariance,
realizability, and hyperbolicity.

We are currently carrying out numerical simulations using the new
model. For the first step, we hope that the model provides 
satisfactory results on standard benchmark problems.

%%% Local Variables: 
%%% mode: latex
%%% TeX-master: "AppM2_MultiD.tex"
%%% End: 

\section*{Acknowledgements}
The authors appreciate the financial supports provided by \emph{the
  National Natural Science Foundation of China (NSFC)} (Grant No.
91330205, 11421110001, 11421101 and 11325102) and by \emph{the
  Specialized Research Fund for State Key Laboratories}.

%%% Local Variables:
%%% mode: latex
%%% TeX-master: "AppM2_MultiD.tex"
%%% End:

\bibliographystyle{plain}
\bibliography{../references}

\end{document}